\newtheorem{theorem}{Theorem}[section]
\newtheorem{corollary}[theorem]{Corollary}
\newtheorem{lemma}[theorem]{Lemma}
\newtheorem{proposition}[theorem]{Proposition}
\theoremstyle{definition}
\newtheorem{notation}[theorem]{Notation}
\theoremstyle{remark}
\newtheorem{remark}[theorem]{Remark}
\DeclareMathOperator{\Sq}{Sq}
\DeclareMathOperator{\rank}{rank}
\begin{document}

\title{On $(n-2)$-connected $2n$-dimensional Poincar\'e complexes with torsion-free homology}
\author{Xueqi Wang}
%\affil{Department of Basic Sciences, Beijing International Studies University, Bejing, 100024, China \authorcr Email: wangxueqi@bisu.edu.cn}
\date{}
\maketitle

\begin{abstract}

Let $X$ be an $(n-2)$-connected $2n$-dimensional Poincar\'e complex with torsion-free homology, where $n\geq 4$. We prove that $X$ can be decomposed into a connected sum of two Poincar\'e complexes: one being $(n-1)$-connected, while the other having trivial $n$th homology group. Under the additional assumption that $H_n(X)=0$ and $\Sq^2:H^{n-1}(X;\mathbb{Z}_2)\to H^{n+1}(X;\mathbb{Z}_2)$ is trivial, we can prove that $X$ can be further decomposed into connected sums of Poincar\'e complexes whose $(n-1)$th homology is isomorphic to $\mathbb{Z}$. As an application of this result, we classify the homotopy types of such $2$-connected $8$-dimensional Poincar\'e complexes.

\noindent\textbf{Keywords: }Poincar\'e complex; homotopy type; connected sum

\noindent\textbf{2020 MSC: }55P15, 57P10
\end{abstract}

\section{Introduction}

A Poincar\'e complex, roughly speaking, is a CW complex which satisfies Poincar\'e duality for arbitrary local coefficients. It can be regarded as a generalization of a manifold and reflects the homotopy properties of manifolds. It plays a crucial role in the classification of manifolds when using the Browder-Novikov-Sullivan-Wall surgery theory. For a precise definition and survey for Poincar\'e complexes, we recommend referencing Klein's work \cite{Kle00}.

A significant challenge in this field is the classification of Poincar\'e complexes satisfying specific conditions. To date, most classifications have been focused on either low-dimensional or highly connected Poincar\'e complexes. In the context of highly connected cases, Wall \cite{Wal62} has classified the homotopy types of $(n-1)$-connected $2n$-dimensional Poincar\'e complexes (see \cite[Section 4]{Kle00} for a detailed discussion). Consequently, this paper primarily focuses on $(n-2)$-connected $2n$-dimensional Poincar\'e complexes. Yamaguchi \cite{Yam90} explored the case where the $(n+1)$-skeleton of $X$ is wedges of $\Sigma^{n-3}\mathbb{C}P^2$. Sasao and Takahashi \cite{ST79} studied the case where $X$ is an $(n-2)$-connected $2n$-dimensional rational homology sphere. However, our understanding of these complexes remains limited in other cases.

In instances where a Poincar\'e complex is simply connected, its definition becomes more straightforward. We define a CW complex $X$ as a simply connected Poincar\'e complex of dimension $m$, if there exists a homology class $[X]\in H_m(X;\mathbb{Z})$ such that the cap product with $[X]$ induces isomorphisms
\[
-\cap [X]: H^q(X;\mathbb{Z})\xrightarrow{\cong} H_{m-q}(X;\mathbb{Z})
\]
for every integer $q$.

Let $X$ denote an $(n-2)$-connected $2n$-dimensional Poincar\'e complex with torsion-free homology, where $n\geq 4$. Our first main result is that $X$ can be decomposed into a connected sum of two Poincar\'e complexes: one being $(n-1)$-connected, while the other having trivial $n$th homology group.

\begin{theorem}\label{thm:1}
  Let $X$ be an $(n-2)$-connected $2n$-dimensional Poincar\'e complex with torsion-free homology ($n\geq 4$). Then $X\simeq X_1\# X_2$, where $H_n(X_1)=0$ and $X_2$ is $(n-1)$-connected.
\end{theorem}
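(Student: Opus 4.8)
\emph{Strategy.} The plan is to replace $X$ by a standard CW model with a single top cell, and then split the attaching map of that cell into a part supported on a wedge of $n$-spheres and a part supported on the remaining cells.

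\emph{Step 1 (a standard cell structure).} Put $r=\operatorname{rank}H_{n-1}(X)$, $s=\operatorname{rank}H_{n}(X)$; Poincar\'e duality and the universal coefficient theorem force $\operatorname{rank}H_{n+1}(X)=r$ and the vanishing of $H_i(X)$ outside $i\in\{0,n-1,n,n+1,2n\}$. Starting from a map $\bigvee^{r}S^{n-1}\to X$ realizing a basis of $\pi_{n-1}(X)\cong H_{n-1}(X)$ and attaching cells dimension by dimension while applying the relative Hurewicz theorem, one obtains a homotopy equivalence
\[
X\;\simeq\;\bigl(Z\vee\textstyle\bigvee^{s}S^{n}\bigr)\cup_{g}e^{2n},
\qquad Z=\textstyle\bigvee^{r}S^{n-1}\cup\bigsqcup^{r}e^{n+1},
\]
where the $(n+1)$-cells of $Z$ are attached along elements of $\pi_{n}(\bigvee^{r}S^{n-1})\cong(\mathbb Z_{2})^{r}$ (sums of $\eta$'s; the corresponding mod-$2$ matrix records $\Sq^{2}\colon H^{n-1}(X;\mathbb Z_{2})\to H^{n+1}(X;\mathbb Z_{2})$), and $g\in\pi_{2n-1}(Z\vee\bigvee^{s}S^{n})$. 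The crucial point enabling this is that at each stage the relevant relative homology group is free (torsion-freeness of $H_*(X)$) and the boundaries of the chosen attaching maps vanish, so the cells attach trivially; $n\ge4$ guarantees $\pi_{n}(S^{n-1})\cong\mathbb Z_{2}$ and keeps the homotopy groups involved in the metastable range.

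\emph{Step 2 (splitting the top attaching map).} A Hilton--Milnor computation in degrees $\le 2n-1$ yields a decomposition
\[
\pi_{2n-1}\bigl(Z\vee\textstyle\bigvee^{s}S^{n}\bigr)\;\cong\;\pi_{2n-1}(Z)\ \oplus\ \pi_{2n-1}\bigl(\textstyle\bigvee^{s}S^{n}\bigr)\ \oplus\ M,
\]
in which the ``mixed'' part $M$ is generated by the Whitehead products $[\iota^{(n)}_{j},\iota^{(n-1)}_{k}]\circ\eta$; here $n\ge4$ rules out any higher mixed brackets. Write $g=g_{1}+g_{2}+\mu$ with $g_{1}\in\pi_{2n-1}(Z)$, $g_{2}\in\pi_{2n-1}(\bigvee^{s}S^{n})$, $\mu\in M$. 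If $g$ can be replaced, via a self-homotopy-equivalence of $Z\vee\bigvee^{s}S^{n}$, by an element with $\mu=0$, then
\[
X\;\simeq\;\bigl(Z\cup_{g_{1}}e^{2n}\bigr)\ \#\ \bigl(\textstyle\bigvee^{s}S^{n}\cup_{g_{2}}e^{2n}\bigr)\;=:\;X_{1}\#X_{2},
\]
because adding attaching maps supported on the two wedge summands is precisely the connected sum of the two mapping cones. By construction $X_{2}$ is $(n-1)$-connected and $H_{n}(X_{1})=0$ (as $Z$ has no $n$-cells); moreover $X_{1}$ and $X_{2}$ are again Poincar\'e complexes, since the homology of $X$ splits along the wedge and the only nonvanishing cup-product pairings of $X$---the unimodular intersection form on $H_{n}(X)$ and the unimodular pairing $H^{n-1}(X)\times H^{n+1}(X)\to H^{2n}(X)$---restrict respectively to $X_{2}$ and to $X_{1}$.

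\emph{Step 3 (the main point: killing $\mu$).} The relevant self-equivalences are the maps $\phi$ which are the identity on $Z$ and send $\iota^{(n)}_{j}\mapsto\iota^{(n)}_{j}+c_{j}$ with $c_{j}\in\pi_{n}(Z)$; each is a homotopy equivalence because $c_{j}$ is automatically trivial on homology. Tracking the effect of $\phi_{*}$ on the mixed component of $g$---using that $\pi_{n}(Z)$ is $2$-torsion (so the diagonal self-Whitehead contributions either vanish or, in the Hopf-invariant-one cases, reproduce the diagonal of the intersection form, while the off-diagonal Whitehead-product terms of $g_{2}$ contribute via the off-diagonal entries of the form)---one finds that $\mu$ changes by $L(c_{1},\dots,c_{s})$, where $L$ is the operator on $\pi_{n}(Z)^{\oplus s}\cong M$ given by the reduction mod $2$ of the intersection matrix of $H_{n}(X)$. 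Since that matrix is unimodular by Poincar\'e duality, $L$ is invertible, so the $c_{j}$ can be chosen to cancel $\mu$. Identifying the indeterminacy of $\mu$ with the mod-$2$ intersection form and then invoking unimodularity is the heart of the argument; it is the only place where the Poincar\'e duality of $X$ is used beyond the numerics of the Betti numbers, and it is where both $n\ge4$ and torsion-freeness are genuinely needed. Combining Steps 1--3 gives $X\simeq X_{1}\#X_{2}$ with $H_{n}(X_{1})=0$ and $X_{2}$ $(n-1)$-connected.
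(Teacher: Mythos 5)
Your proposal is correct and takes essentially the same route as the paper: peel off the $n$-cells of a minimal cell structure to get $X^{(n+1)}\simeq Z\vee\bigvee^s S^n$, isolate the ``mixed'' Whitehead-product-times-$\eta$ summand of $\pi_{2n-1}$, and use the self-equivalences $\iota_n^j\mapsto\iota_n^j+c_j$ (with $c_j\in\pi_n(Z)\cong(\mathbb{Z}_2)^{r'}$) together with the fact that the change in the mixed component is given by right multiplication by the mod-$2$ intersection matrix, whose unimodularity lets you kill $\mu$ (this is the paper's Lemma~\ref{thm:6} and the matrix equation $A_{f\phi}=A_\phi+C_fB_\phi$). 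The only cosmetic difference is that the paper first pins down $Z$ via Chang's classification of $\mathbf{A}_n^2$-complexes into wedges of $S^{n-1}\vee S^{n+1}$ and $\Sigma^{n-3}\mathbb{C}P^2$, whereas you keep $Z$ abstract, which works because the mixed generators are precisely $[\gamma,\iota_n^j]$ with $\gamma\in\pi_n(Z)$ and $\pi_n(Z)$ already accounts for the $\Sigma^{n-3}\mathbb{C}P^2$ summands killing their $\eta$'s.
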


Wall \cite{Wal62} has provided complete invariants for $(n-1)$-connected $2n$-dimensional Poincar\'e complexes. To fully understand $(n-2)$-connected $2n$-dimensional Poincar\'e complexes, we can primarily focus on the case where the $n$th homology group is trivial, as indicated by Theorem \ref{thm:1}. We prove that such a complex $X$ can be further decomposed into connected sums of the ``simplest'' ones, provided that $X$ satisfies the following hypothesis (H):

\begin{description}
  \item[Hypothesis (H)] (1) $H_n(X)=0$;

  (2) $\Sq^2:H^{n-1}(X;\mathbb{Z}_2)\to H^{n+1}(X;\mathbb{Z}_2)$ is trivial.
\end{description}

\begin{theorem}\label{thm:7}
  Let $X$ be an $(n-2)$-connected $2n$-dimensional Poincar\'e complex ($n\geq 4$) which has torsion-free homology and satisfies hypothesis (H). Then $X\simeq X_1\# X_2\# \cdots \# X_k$, where $H_{n-1}(X_i)\cong\mathbb{Z}$, $1\leq i\leq k$.
\end{theorem}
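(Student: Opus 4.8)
The plan is to first pin down a minimal cell structure for $X$ from hypothesis (H) and Poincar\'e duality, and then to reduce the statement to an elementary analysis of a single attaching map.

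\emph{Step 1 (cell structure).} Since $X$ is simply connected with torsion-free homology and $H_n(X)=0$, Poincar\'e duality shows $H_*(X)$ is free and concentrated in degrees $0,n-1,n+1,2n$, with $H_{n-1}(X)\cong H_{n+1}(X)\cong\mathbb{Z}^r$, $r=\rank H_{n-1}(X)$. A simply connected complex with finitely generated free homology admits a minimal CW structure, so here $X$ has a structure with one $0$-cell, $r$ cells in dimension $n-1$, $r$ in dimension $n+1$, and one $2n$-cell; in particular there are no $n$-cells, so $X^{(n+1)}$ is obtained by attaching the $(n+1)$-cells to $A:=\bigvee^r S^{n-1}$ along a map with components in $\pi_n(A)$. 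For $n\geq 4$ one has $\pi_n(A)\cong(\mathbb{Z}_2)^r$, generated by the $\eta\iota_i$, and $\Sq^2$ on $H^{n-1}(X^{(n+1)};\mathbb{Z}_2)$ records exactly the $\eta$-coefficients of these attaching maps; since the $(n+1)$-cells form a basis of $H_{n+1}$ there is no cancellation, so hypothesis (H)(2) forces every such attaching map to vanish. Hence $X^{(n+1)}\simeq W:=A\vee B$ with $B:=\bigvee^r S^{n+1}$, and $X\simeq W\cup_\alpha e^{2n}$ for some $\alpha\in\pi_{2n-1}(W)$.

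\emph{Step 2 (reformulation).} For every $\alpha\in\pi_{2n-1}(W)$ the space $W\cup_\alpha e^{2n}$ has the homology above, and it is a Poincar\'e complex precisely when the cup-product pairing $H^{n-1}\otimes H^{n+1}\to H^{2n}\cong\mathbb{Z}$ is unimodular --- equivalently when the integral matrix $\Lambda(\alpha)$ of coefficients of the Whitehead products $[\iota^A_i,\iota^B_j]$ in the Hilton decomposition of $\alpha$ lies in $GL_r(\mathbb{Z})$. Moreover a connected sum of simply connected $2n$-dimensional Poincar\'e complexes $Y_i\cup_{\alpha_i}e^{2n}$ (with $Y_i$ the complement of the top cell) is $(\bigvee_iY_i)\cup_{\sum_i\alpha_i}e^{2n}$. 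Thus it suffices to find a homotopy self-equivalence $\phi$ of $W$ with $\phi_*\alpha=\sum_{i=1}^r\alpha_i$, where each $\alpha_i$ lies in $\pi_{2n-1}(S^{n-1}_i\vee S^{n+1}_i)$ with $[\iota^A_i,\iota^B_i]$-coefficient $1$: then $X\simeq X_1\#\cdots\#X_r$ with $X_i:=(S^{n-1}_i\vee S^{n+1}_i)\cup_{\alpha_i}e^{2n}$ a Poincar\'e complex and $H_{n-1}(X_i)\cong\mathbb{Z}$ (the case $r=0$ being $X\simeq S^{2n}$).

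\emph{Step 3 (normalizing $\alpha$).} The homotopy self-equivalences of $W$ include $GL_r(\mathbb{Z})\times GL_r(\mathbb{Z})$, acting by independent base changes on $A$ and $B$; since $\Lambda(\alpha)$ is unimodular we may use this to arrange $\Lambda(\alpha)=I$. By Hilton's theorem the ``cross terms'' of $\alpha$ --- basic-product contributions involving spheres from two distinct pairs $\{S^{n-1}_i,S^{n+1}_i\}$ --- are then, for $n\geq 5$, only the $\mathbb{Z}_2$-classes $[\iota^A_i,\iota^A_j]\circ\eta^2$ with $i\neq j$, and for $n=4$ additionally the integral weight-three Whitehead products $[\iota^A_i,[\iota^A_j,\iota^A_k]]$ with $i,j,k$ not all equal. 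To remove them one uses the ``triangular'' self-equivalences that fix $A$ and send $\iota^B_j\mapsto\iota^B_j+\xi_j$ with $\xi_j\in\pi_{n+1}(A)$: taking $\xi_j$ of the form $\iota^A_k\circ\eta^2$ kills the weight-two cross terms (using $\Lambda=I$ and $[\iota^A_i,\iota^A_k\circ\eta^2]=[\iota^A_i,\iota^A_k]\circ\eta^2$), while for $n=4$ taking $\xi_j$ a Whitehead product $[\iota^A_a,\iota^A_b]\in\pi_5(A)$ kills the weight-three cross terms, since $[\iota^A_j,[\iota^A_a,\iota^A_b]]$ ranges over a spanning set of the weight-three part of the free Lie algebra. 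Carrying out these moves in the right order (weight three before weight two, so that the later moves do not recreate the earlier terms, and noting that none disturbs $\Lambda=I$) leaves $\alpha=\sum_{i=1}^r\alpha_i$, and Step 2 concludes.

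\emph{Main obstacle.} The technical heart is Step 3: correctly enumerating which Hilton basic products survive in $\pi_{2n-1}(W)$ in these low degrees --- in particular the weight-three classes peculiar to $n=4$ --- and verifying through the standard Whitehead-product and composition identities that the chosen self-equivalences act on $\alpha$ exactly as claimed. A lesser point needing care is confirming that the pieces $X_i$ really are Poincar\'e complexes and that the iterated connected sum recovers $X$ up to homotopy, both of which follow from the unimodularity criterion of Step 2 and the standard behaviour of connected sums under removal of the top cell.
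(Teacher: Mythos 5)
Your proof is correct and follows essentially the same route as the paper's: identify $X^{(n+1)}$ with $\bigvee_i(S^{n-1}_i\vee S^{n+1}_i)$, use the Hilton--Milnor decomposition of $\pi_{2n-1}$, normalize the $[\iota_{n-1}^i,\iota_{n+1}^j]$-matrix to the identity by independent $GL_k(\mathbb{Z})$ base changes on the two wedges (Lemma~\ref{thm:11}), and eliminate the remaining cross-terms in $C$ by the same triangular self-equivalences $\iota_{n+1}^j\mapsto\iota_{n+1}^j+\xi_j$, with the extra weight-three adjustments for $n=4$ (Lemmas~\ref{thm:20} and \ref{thm:19}). The only cosmetic difference is in Step~1, where you read the $\eta$-coefficients of the $(n+1)$-cell attaching maps off $\Sq^2$ directly, whereas the paper instead invokes Chang's classification of $\mathbf{A}^2_n$-complexes (Proposition~\ref{thm:8}).
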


\begin{remark}
  If $X$ is an $(n-2)$-connected $2n$-dimensional Poincar\'e complex with torsion-free homology and trivial $n$th homology, then the $(n+1)$-skeleton of $X$ must be wedges of copies of $S^{n-1}\vee S^{n+1}$ and $\Sigma^{n-3}\mathbb{C}P^2$ (see Proposition \ref{thm:8}). We further assume that $\Sq^2:H^{n-1}(X;\mathbb{Z}_2)\to H^{n+1}(X;\mathbb{Z}_2)$ is trivial to rule out the $\Sigma^{n-3}\mathbb{C}P^2$ summands, as their presence would significantly complicate the calculations. In comparison to Ishimoto's results \cite{Ish73} on the classification of $(n-2)$-connected $2n$-manifolds with torsion-free homology, we may expect that $X\simeq X_1\# X_2\# \cdots \# X_k$ where $\rank H_n(X_i)\leq 2$ for $1\leq i\leq k$, provided hypothesis (H) is not satisfied.
\end{remark}

Using Theorem \ref{thm:7}, we are able to determine all possible homotopy types of $X$ for a specific $n$. For example, we can obtain a classification of $2$-connected $8$-dimensional Poincar\'e complex which has torsion-free homology and satisfies hypothesis (H). To formulate our result, we use the following notations. Let
\[
X_{r,s}=(S^3\vee S^5)\cup_{[\iota_3,\iota_5]+r\nu'\eta_6+s\eta_5^2} D^8,
\]
where $\iota_k$ denotes the identity map of $S^k$, while $\nu'\eta_6$ and $\eta_5^2$ are certain elements in $\pi_7(S^3)$ and $\pi_7(S^5)$ respectively (cf Lemma \ref{thm:16}). The composition with the inclusion into $S^3\vee S^5$ is omitted for brevity, and we use the same letter for a map and its homotopy class if there is no confusion. It is obvious that $X_{0,0}\simeq S^3\times S^5$.

\begin{theorem}\label{thm:15}
  Let $X$ be a $2$-connected $8$-dimensional Poincar\'e complex which has torsion-free homology and satisfies hypothesis (H) (where $n=4$). Suppose $\rank H_3(X)=k$, then $X$ is homotopy equivalent to one of the following complexes:

  $(1)\ \ \#_k S^3\times S^5$, \quad $(2) \ \ (\#_{k-1}S^3\times S^5)\#X_{1,0}$, \quad $(3)\ \ (\#_{k-1}S^3\times S^5)\#X_{0,1}$,

  $(4)\ \ (\#_{k-1}S^3\times S^5)\#X_{1,1}$, \quad $(5)\ \ (\#_{k-2}S^3\times S^5)\#X_{1,0}\#X_{0,1}$ (when $k\geq 2$).

  \noindent Furthermore, these complexes are not homotopy equivalent to each other.
\end{theorem}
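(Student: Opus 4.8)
First I would invoke Theorem~\ref{thm:7}. Since $X$ is $2$-connected, $8$-dimensional, has torsion-free homology and satisfies hypothesis~(H), it splits as $X\simeq X_1\#\cdots\#X_k$ with $H_3(X_i)\cong\mathbb Z$, and $k=\rank H_3(X)$. Each $X_i$ inherits all the hypotheses (a connected summand is again $2$-connected Poincar\'e, its homology is a direct summand of $H_*(X)$ in degrees $<2n$, and Steenrod operations respect this splitting), so by Poincar\'e duality $H_*(X_i)$ is that of $S^3\times S^5$; since $H_4(X_i)=0$ there is no $4$-cell, and since $\Sq^2\colon H^3(X_i;\mathbb Z_2)\to H^5(X_i;\mathbb Z_2)$ vanishes the $5$-cell is attached trivially (otherwise $X_i$ would contain $S^3\cup_{\eta_3}D^5=\Sigma^2\mathbb{C}P^2$ and have nonzero $\Sq^2$), so $X_i\simeq(S^3\vee S^5)\cup_{\phi_i}D^8$ as in Proposition~\ref{thm:8}. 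By Hilton's theorem $\pi_7(S^3\vee S^5)\cong\pi_7(S^3)\oplus\pi_7(S^5)\oplus\pi_7(S^7)=\mathbb Z_2\langle\nu'\eta_6\rangle\oplus\mathbb Z_2\langle\eta_5^2\rangle\oplus\mathbb Z\langle[\iota_3,\iota_5]\rangle$; the $[\iota_3,\iota_5]$-coefficient of $\phi_i$ realizes the cup product $H^3(X_i)\otimes H^5(X_i)\to H^8(X_i)$, which Poincar\'e duality forces to be $\pm1$ (and $\pm1$ suffices), and composing with $\operatorname{id}_{S^3}\vee(-\operatorname{id}_{S^5})$, which fixes the order-two classes $\nu'\eta_6,\eta_5^2$, normalizes it to $1$. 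Hence $X_i\simeq X_{r_i,s_i}$, and $X$ is a connected sum of copies of $X_{0,0}\simeq S^3\times S^5$, $X_{1,0}$, $X_{0,1}$, $X_{1,1}$.

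Next I would pass to attaching maps. Set $M=\bigvee_{i=1}^k(S^3\vee S^5)$ with the $i$-th wedge summand carrying $x_i\in\pi_3$, $y_i\in\pi_5$; then $X\simeq M\cup_\Phi D^8$ with $\Phi=\sum_i\bigl([x_i,y_i]+r_i\,\nu'\eta^{(i)}+s_i\,\eta^{2(i)}\bigr)$. Because $M$ is the $7$-skeleton of $X$ (no cells in dimensions $6,7$), a homotopy equivalence between two such complexes restricts to a self-equivalence of $M$ carrying one attaching map to $\pm$ the other, and conversely; so the classification amounts to computing the orbits of these $\Phi$ under the group of self-equivalences of $M$ acting on $\pi_7(M)$. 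I would record $\Phi$ through the Hilton decomposition of $\pi_7(M)$ — free summands $\pi_7(S^7)$ for the basic products $[x_i,y_j]$ and the triple brackets in the $x$'s, and $\mathbb Z_2$-summands $\pi_7(S^3)$, $\pi_7(S^5)$ for $x_i$, $y_i$ and $\pi_7(S^5)$ for each $[x_i,x_j]$ — so that $\Phi$ is given by $\phi_3=(r_i)\in(\mathbb Z_2)^k$, $\phi_5=(s_i)\in(\mathbb Z_2)^k$, the identity intersection matrix, and zero elsewhere. A self-equivalence is determined by a matrix $A\in GL_k(\mathbb Z)$ on $\{x_i\}$ (with $(A^{T})^{-1}$ forced on $\{y_i\}$ to preserve the intersection form) together with unipotent data adding Whitehead products and torsion classes $\zeta_l:=x_l\circ\eta_3^2\in\pi_5(S^3_{x_l})$ to the $y_i$. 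Two unstable identities control the effect on $\Phi$: a computation with the EHP sequence of $S^3$ (using $\eta_3\eta_4\eta_5=6\nu'\ne 0$ in $\pi_6(S^3)=\mathbb Z_{12}$) gives the James--Hopf invariant $H_2(\nu'\eta_6)=\eta_5^2$, whereas $H_2(\eta_5^2)=0$ for dimension reasons; and $[x_i,\zeta_l]=[x_i,x_l]\circ\eta_5^2$, since $\Sigma^2\eta_3^2=\eta_5^2$.

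For realization I would reduce $\Phi$ to one of five standard forms in three steps. Step 1: an $A\in GL_k(\mathbb Z)$ brings $\phi_3$ to $e_1$ or $0$; by the correction $H_2(\nu'\eta_6)=\eta_5^2$ this produces junk in the $[x_i,x_j]\circ\eta_5^2$-summands. Step 2: the moves $y_i\mapsto y_i+\zeta_l$ add $[x_i,x_l]\circ\eta_5^2$ to the attaching map while leaving $\phi_3$ and $\phi_5$ untouched (using $\zeta_l\circ\eta_5^2=0$ and $[x_i,\zeta_l]=[x_i,x_l]\circ\eta_5^2$), so finitely many of them kill all the junk. Step 3: an $A\in GL_k(\mathbb Z)$ with first row $e_1^{T}$ preserves $\phi_3$ and, since then its first row has even off-diagonal entries, creates no new junk; it brings $\phi_5$ to $0$, $e_1$, or (when $k\ge2$ and $(\phi_5)_1=0$) $e_2$. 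The outcomes are precisely the attaching maps $\sum[x_i,y_i]+\sum_{i\in P}\nu'\eta^{(i)}+\sum_{i\in Q}\eta^{2(i)}$ with $(P,Q)\in\{(\varnothing,\varnothing),(\{1\},\varnothing),(\varnothing,\{1\}),(\{1\},\{1\}),(\{1\},\{2\})\}$, i.e.\ the complexes (1)--(5).

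For distinctness, the rank $k=\rank H_3(X)$ separates complexes with different numbers of summands. For fixed $k$, I would observe that under any self-equivalence $\phi_3$ and $\phi_5$ transform linearly, $\phi_3\mapsto\bar A^{T}\phi_3$ and $\phi_5\mapsto\overline{A^{-1}}\phi_5$ with $\bar A\in GL_k(\mathbb Z_2)$, because the unipotent and torsion data and the James--Hopf corrections feed only into the $[x_i,x_j]\circ\eta_5^2$- and triple-bracket-summands, never into $\phi_3$ or $\phi_5$; hence $[\phi_3\ne0]$, $[\phi_5\ne0]$ and the pairing $\phi_3^{T}\phi_5\in\mathbb Z_2$ are homotopy invariants. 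These take the values $(0,0,0),(1,0,0),(0,1,0),(1,1,1),(1,1,0)$ on (1)--(5), so the five complexes are pairwise inequivalent. The main obstacle throughout is Step~2: arranging that the unstable James--Hopf correction attached to $\nu'\eta_6$ is annihilated exactly by the $\zeta_l$-moves, and verifying that no Hilton summand of $\pi_7(M)$ other than those carrying $\phi_3$ and $\phi_5$ is ever forced to survive in the standard form.
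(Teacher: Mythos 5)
Your proposal is correct and follows the same basic strategy as the paper's proof: invoke Theorem~\ref{thm:7}, write $X\simeq X(\Phi)$ with $\Phi=\sum_i([\iota_3^i,\iota_5^i]+r_i\nu'\eta_6^{(i)}+s_i\eta_5^{2(i)})$, and compute orbits of $\Phi$ under $\mathcal{E}(\bigvee(S^3_i\vee S^5_i))$ via Lemma~\ref{thm:12}, using the Hilton decomposition, the structure of $\mathcal{E}$ from Lemma~\ref{thm:3}, the constraint $Q_fQ_h^T=\pm I$ from Lemma~\ref{thm:11}, and the formulas $[\iota_3^i,\iota_3^l\eta_3^2]=[\iota_3^i,\iota_3^l]\eta_5^2$, $\eta_3\eta_4\eta_5\eta_6=0$, and $H(\nu'\eta_6)=\eta_5^2$.

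Where you genuinely diverge in organization: for \emph{realization} the paper proves the four explicit $k=2$ equivalences of Proposition~\ref{thm:17} by exhibiting one self-map $F=\iota_3^1\vee(\iota_5^1-\iota_5^2)\vee(\iota_3^1+\iota_3^2)\vee\iota_5^2$ and then iterates; you instead run a normal-form reduction (bring $(r_i)$ to $0$ or $e_1$ in $GL_k(\mathbb{Z}_2)$, absorb the resulting $[\iota_3^i,\iota_3^j]\eta_5^2$-junk by the $y_i\mapsto y_i+\iota_3^l\eta_3^2$ moves, then bring $(s_i)$ to $0$, $e_1$, or $e_2$ by the mod-$2$ stabilizer of $e_1$). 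For \emph{distinctness} the paper rules out each $F\phi_i=\pm\phi_j$ case by case, ending with the $\sum a_{i1}b_{i1}\equiv0$ vs.\ $\pm1$ contradiction; you package the same computations into three explicit $\mathbb{Z}_2$-invariants, namely $[\phi_3\neq0]$, $[\phi_5\neq0]$, and $\phi_3^T\phi_5$, which is cleaner and makes the pairwise inequivalence a one-line check. Both organizations rest on the same verification — that $g$, the Hilton--Hopf corrections to $\nu'\eta_6$, and the $\eta_3^2$-twists all land in the summands $C$ and never in the $\phi_3,\phi_5$ coordinates — which you do carry out. One small inaccuracy: $H(\nu'\eta_6)=\eta_5^2$ is a cited fact (Toda's (5.7)); your parenthetical EHP derivation via $\eta_3\eta_4\eta_5=6\nu'$ is a heuristic, not the actual proof, but nothing in your argument depends on re-deriving it.
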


This paper is organized as follows: Section \ref{sec:1} presents fundamental results in homotopy theory that will be used subsequently. The proofs of Theorems \ref{thm:1}, \ref{thm:7}, and \ref{thm:15} are then provided in Sections \ref{sec:2}, \ref{sec:4}, and \ref{sec:5} respectively.

\section{Preliminaries}\label{sec:1}

In this paper, we use the same letter to denote a map and its homotopy class, and we often omit the inclusion map to a lager space, if there is no confusion.

\subsection{Results on homotopy groups}

We briefly review some fundamental results of the homotopy groups of spheres. It is well known that $\pi_k(S^n)=0$ for $k<n$, and $\pi_n (S^n)\cong\mathbb{Z}$ which is generated by the identity map $\iota_n$ of $S^n$. Additionally, we present other results that may be used in subsequent discussions.

\begin{lemma}[\cite{Tod52,Tod62}]\label{thm:16}
  \begin{enumerate}
    \item $\pi_{n+1}(S^n)\cong
    \begin{cases}
      \mathbb{Z}\{\eta_2\}, &  n=2 \\
      \mathbb{Z}_2\{\eta_n\}, &  n\geq 3.
    \end{cases}$

    Here $\eta_2: S^3\to S^2$ is the Hopf map and $\eta_n=\Sigma^{n-2}\eta_2$ is the $(n-2)$-fold suspension of $\eta_2$.
    \item $\pi_{n+2}(S^n)\cong\mathbb{Z}_2\{\eta_n^2\}$ for $n\geq 2$, where $\eta_n^2=\eta_n\eta_{n+1}$.
%    \item $\pi_6(S^3)\cong \mathbb{Z}_{12}\{a_3\}$.
    \item $\pi_7(S^3)\cong\mathbb{Z}_2\{\nu'\eta_6\}$, where $\nu'$ is an element of order $4$ in $\pi_6(S^3)$. Besides, one has $\eta_3\eta_4\eta_5\eta_6=0$.
  \end{enumerate}
\end{lemma}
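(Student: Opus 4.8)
All three statements are classical computations of Toda \cite{Tod52,Tod62}, so the plan is to indicate how each follows from the Hopf fibrations, the Freudenthal suspension theorem, and the EHP exact sequence, rather than to reproduce the full bookkeeping of those references. For (1), the long exact homotopy sequence of the Hopf fibration $S^1\to S^3\to S^2$ together with $\pi_k(S^1)=0$ for $k\geq2$ gives $\pi_3(S^2)\cong\pi_3(S^3)\cong\mathbb{Z}$, generated by the bundle projection $\eta_2$. For $n\geq3$ the suspension $\Sigma\colon\pi_3(S^2)\to\pi_4(S^3)$ is onto by Freudenthal, with kernel generated by the Whitehead product $[\iota_2,\iota_2]$; since the Hopf invariant $H\colon\pi_3(S^2)\to\mathbb{Z}$ is an isomorphism with $H(\eta_2)=1$ and $H([\iota_2,\iota_2])=\pm2$, this kernel equals $\langle2\eta_2\rangle$, so $\pi_4(S^3)\cong\mathbb{Z}_2\{\eta_3\}$; Freudenthal then makes $\Sigma\colon\pi_{n+1}(S^n)\to\pi_{n+2}(S^{n+1})$ an isomorphism for every $n\geq3$, giving $\pi_{n+1}(S^n)\cong\mathbb{Z}_2\{\eta_n\}$ with $\eta_n=\Sigma^{n-2}\eta_2$.

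For (2), the group $\pi_{n+2}(S^n)$ is in the stable range for $n\geq4$ and then equals the $2$-stem $\pi_2^s$, while $\pi_4(S^2)$ and $\pi_5(S^3)$ are computed directly from the EHP sequences for $S^2$ and $S^3$; in every case the answer is $\mathbb{Z}_2$, and the composite $\eta_n\eta_{n+1}$ is a generator once one knows it is nonzero. By injectivity of the relevant suspensions, the nonvanishing of $\eta_n^2$ reduces to $\eta^2\neq0$ in $\pi_2^s$, which holds because $\eta^2$ is detected by the secondary cohomology operation attached to the Adem relation $\Sq^2\Sq^2=\Sq^3\Sq^1$ (equivalently, it survives in the Adams spectral sequence).

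For (3), it is classical (Serre, Toda) that $\pi_6(S^3)\cong\mathbb{Z}_{12}$; Toda's element $\nu'$ has order $4$, generating the $2$-primary summand $\mathbb{Z}_4$, and is pinned down by the Whitehead-product relation $[\iota_4,\iota_4]=\pm(2\nu_4-\Sigma\nu')$ in $\pi_7(S^4)$. The EHP sequence for $S^3$ then gives $\pi_7(S^3)\cong\mathbb{Z}_2$, and a short diagram chase exhibits $\nu'\eta_6$ (with $\eta_6\in\pi_7(S^6)$) as a generator. It remains to prove $\eta_3\eta_4\eta_5\eta_6=0$. Since $\eta_6$ is a suspension, hence a co-$H$ map, the map $-\circ\eta_6\colon\pi_6(S^3)\to\pi_7(S^3)$ is a homomorphism, and because $\pi_7(S^3)\cong\mathbb{Z}_2$ has exponent two its kernel contains $2\pi_6(S^3)$; thus it suffices to show $\eta_3\eta_4\eta_5\in2\pi_6(S^3)$. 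The triple composite $\eta_3\eta_4\eta_5=\eta_3^2\eta_5$ suspends to $\eta^3\in\pi_3^s\cong\mathbb{Z}_{24}$, which is the element of order two, $12\nu\in4\pi_3^s$; and since $\Sigma^3\colon\pi_6(S^3)\to\pi_3^s$ is injective (ultimately because $S^3$ is an $H$-space, so no Whitehead product obstructs the first suspension), its image is forced to be $2\pi_3^s$, whence the preimage of $4\pi_3^s$ is precisely $2\pi_6(S^3)$. Therefore $\eta_3\eta_4\eta_5\in2\pi_6(S^3)$ and $\eta_3\eta_4\eta_5\eta_6=(\eta_3\eta_4\eta_5)\circ\eta_6=0$.

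The main obstacle is that none of this has a genuinely short, self-contained proof: the value of $\pi_2^s$, the structure of $\pi_6(S^3)$, and the identification of the generator of $\pi_7(S^3)$ all rest on the EHP and composition machinery, so within this paper it is cleanest to invoke them directly as Toda's classical results. The one place requiring a bit of honest work is the relation $\eta_3\eta_4\eta_5\eta_6=0$, and the divisibility-and-suspension argument sketched above is the natural route to it.
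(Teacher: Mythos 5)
The paper gives no proof of this lemma at all: it is imported verbatim from Toda's references, so any argument is necessarily a different route from the paper's (which is pure citation). Judged on its own terms, your sketch is correct. Parts (1) and (2) are the standard derivations (Hopf fibration, Freudenthal with kernel generated by $[\iota_2,\iota_2]=2\eta_2$, stability plus a detection argument for $\eta^2\neq 0$), and in part (3) the only place you argue rather than cite is the relation $\eta_3\eta_4\eta_5\eta_6=0$; that argument is valid: right composition with the suspension $\eta_6$ is additive (the paper's Lemma \ref{thm:22}(1)), hence kills $2\pi_6(S^3)$ because $\pi_7(S^3)\cong\mathbb{Z}_2$, and your stable computation does place $\eta_3\eta_4\eta_5$ in $2\pi_6(S^3)$. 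Two remarks. First, the stable detour is unnecessary: post-composition with $\eta_3$ is always a homomorphism and $\eta_4\eta_5$ has order $2$, so $\eta_3\eta_4\eta_5$ has order dividing $2$ in $\pi_6(S^3)\cong\mathbb{Z}_{12}$, and the unique element of order $2$ there is already divisible by $2$; composing with $\eta_6$ then gives $0$ without invoking $\eta^3=12\nu$ or injectivity of stabilization. Second, your parenthetical justification of that injectivity (``$S^3$ is an $H$-space'') only handles the first suspension $\pi_6(S^3)\to\pi_7(S^4)$; for the next one you also need that the kernel generated by $[\iota_4,\iota_4]=\pm(2\nu_4-\Sigma\nu')$ meets the suspended torsion trivially, which follows from the very relation you quote but should be said explicitly. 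Neither point is a genuine gap, and within this paper the efficient course is indeed what the paper does: cite Toda.
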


%For general results of homotopy groups of spheres, we would need the following finiteness theorem proved by Serre .

%\begin{theorem}[Serre finiteness theorem \cite{Ser53}]\label{thm:10}
%  The homotopy group $\pi_{n+k}(S^k)$ is a finite group except
%  \begin{enumerate}
%    \item for $n=0$ in which case $\pi_k(S^k)\cong\mathbb{Z}$;
%    \item $k=2m$ and $n=2m-1$ in which case $\pi_{4m-1}(S^{2m})\cong \mathbb{Z}\oplus F_m$ for $F_m$ a finite group.
%  \end{enumerate}
%\end{theorem}

Let $X\cup_\phi D^n$ represent the space obtained by attaching $D^n$ to $X$ via a map $\phi: S^{n-1}=\partial D^n\to X$. The homotopy type of $X\cup_\phi D^n$ only depends on the homotopy class of $\phi$. Using the cell decomposition $\mathbb{C}P^2=S^2\cup_{\eta_2}D^4$, one can easily derive the following information.

\begin{lemma}
  \begin{enumerate}
    \item $\pi_{n+2}(\Sigma^n\mathbb{C}P^2)\cong\mathbb{Z}\{\iota_{n+2}\}$;
    \item $\pi_{n+3}(\Sigma^n\mathbb{C}P^2)=0$.
  \end{enumerate}
\end{lemma}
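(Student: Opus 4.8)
The plan is to use the minimal CW structure of $\Sigma^n\mathbb{C}P^2$ coming from $\mathbb{C}P^2=S^2\cup_{\eta_2}D^4$: suspending $n$ times gives $\Sigma^n\mathbb{C}P^2\simeq S^{n+2}\cup_{\eta_{n+2}}D^{n+4}$, since $\Sigma^n\eta_2=\eta_{n+2}$. Write $Y=\Sigma^n\mathbb{C}P^2$. For part (1), note that $Y$ has cells only in dimensions $0$, $n+2$, and $n+4$, hence is $(n+1)$-connected; the Hurewicz theorem then gives $\pi_{n+2}(Y)\cong H_{n+2}(Y)\cong\mathbb{Z}$, and a generator is the homotopy class of the bottom-cell inclusion $S^{n+2}\hookrightarrow Y$, which is what the paper denotes $\iota_{n+2}$.

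For part (2), I would run the homotopy exact sequence of the pair $(Y,S^{n+2})$. Since $Y/S^{n+2}\simeq S^{n+4}$, the relative homology $H_*(Y,S^{n+2})$ is $\mathbb{Z}$ concentrated in degree $n+4$, so the pair is $(n+3)$-connected; as $S^{n+2}$ is simply connected, the relative Hurewicz theorem yields $\pi_k(Y,S^{n+2})=0$ for $k\le n+3$ and $\pi_{n+4}(Y,S^{n+2})\cong\mathbb{Z}$, with generator represented by the characteristic map of the top cell. The exact sequence then reads
\[
\pi_{n+4}(Y,S^{n+2})\xrightarrow{\ \partial\ }\pi_{n+3}(S^{n+2})\longrightarrow\pi_{n+3}(Y)\longrightarrow\pi_{n+3}(Y,S^{n+2})=0.
\]
Comparing with the pair $(D^{n+4},S^{n+3})$ via the characteristic map and using naturality of $\partial$ shows that $\partial$ sends a generator of $\pi_{n+4}(Y,S^{n+2})$ to the class of the attaching map, namely $\eta_{n+2}$. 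By Lemma \ref{thm:16}(1) this class generates $\pi_{n+3}(S^{n+2})$ (which is $\mathbb{Z}_2$ for $n\ge 1$ and $\mathbb{Z}$ for $n=0$), so $\partial$ is surjective and exactness forces $\pi_{n+3}(Y)=0$. Equivalently, one may phrase part (2) as the statement that $S^{n+2}\hookrightarrow Y$ induces an epimorphism $\pi_{n+3}(S^{n+2})\twoheadrightarrow\pi_{n+3}(Y)$ with kernel generated by the attaching class $\eta_{n+2}$.

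I do not expect a serious obstacle here: the two non-formal ingredients are the identification of $\partial$ of the top cell's generator with the attaching map $\eta_{n+2}$ --- a standard consequence of naturality of the connecting homomorphism together with the relative Hurewicz isomorphism --- and the value $\pi_{n+3}(S^{n+2})\cong\mathbb{Z}_2$ quoted in Lemma \ref{thm:16}. The only points requiring a word of care are that $Y$ is simply connected (so that Hurewicz applies) and that $n$ lies in the range where $\eta_{n+2}$ is a torsion generator; both hold for all $n\ge 0$, in particular for the suspensions $\Sigma^{n-3}\mathbb{C}P^2$ with $n\ge 4$ that appear elsewhere in the paper.
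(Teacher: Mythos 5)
Your argument is correct and is exactly the standard argument the paper leaves implicit (the paper just cites the cell decomposition $\Sigma^n\mathbb{C}P^2\simeq S^{n+2}\cup_{\eta_{n+2}}D^{n+4}$ and states the result without proof): Hurewicz for $\pi_{n+2}$, and the exact sequence of the pair with $\partial$ hitting the attaching class $\eta_{n+2}$, which generates $\pi_{n+3}(S^{n+2})\cong\mathbb{Z}_2$, for $\pi_{n+3}$. No gaps; the only care needed is the range $n\ge 1$ for the $\mathbb{Z}_2$ statement, which you address and which covers all cases used in the paper.
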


%\begin{lemma}[{\cite[Theorem X.8.18]{Whi78}}]\label{thm:3}
%  Let $\alpha\in\pi_{p+1}(X)$, $\beta\in\pi_{q+1}(X)$, $\gamma\in \pi_m(S^p)$, $\delta\in\pi_n(S^q)$. Then
%  \[
%  [\alpha\circ\Sigma\gamma, \beta\circ \Sigma\delta]=[\alpha,\beta]\circ \Sigma(\gamma\wedge\delta).
%  \]
%\end{lemma}

\subsection{Additivity of composition}

\begin{lemma}[cf {\cite[Theorems X.8.2 and X.8.3]{Whi78}}]\label{thm:22}
  \begin{enumerate}
    \item If $\sigma\in\pi_n(S^r)$ is a suspended element and $\alpha_1,\alpha_2\in\pi_r(X)$, then $(\alpha_1+\alpha_2)\sigma=\alpha_1\sigma+\alpha_2\sigma$.
    \item If $\sigma\in\pi_n(S^r)$, $\alpha_1,\alpha_2\in\pi_r(X)$, and $X$ is an $H$-space, then $(\alpha_1+\alpha_2)\sigma=\alpha_1\sigma+\alpha_2\sigma$.
  \end{enumerate}
\end{lemma}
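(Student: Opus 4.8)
The plan is to reduce both parts to a single compatibility statement about pinch maps. First I would recall that for $\alpha_1,\alpha_2\in\pi_r(X)$ the sum $\alpha_1+\alpha_2$ is represented by the composite $S^r \xrightarrow{c_r} S^r\vee S^r \xrightarrow{\{a_1,a_2\}} X$, where $c_r$ is the pinch (co-$H$) map of $S^r$ and $\{a_1,a_2\}$ restricts on the two wedge summands to fixed representatives $a_i$ of $\alpha_i$. Writing $s\colon S^n\to S^r$ for a representative of $\sigma$, the class $(\alpha_1+\alpha_2)\sigma$ is then represented by $\{a_1,a_2\}\circ c_r\circ s$, while $\alpha_1\sigma+\alpha_2\sigma$ is represented by $\{a_1\circ s,\ a_2\circ s\}\circ c_n=\{a_1,a_2\}\circ(s\vee s)\circ c_n$. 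So the task becomes to prove, under either hypothesis, that
\[
c_r\circ s \;\simeq\; (s\vee s)\circ c_n \colon S^n \longrightarrow S^r\vee S^r ,
\]
possibly only after composing with $\{a_1,a_2\}$ into $X$; that is, $s$ should behave like a map of co-$H$-spaces, either intrinsically (part 1) or because the target can absorb the discrepancy (part 2).

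For part (1) I would write $s=\Sigma s_0$ with $s_0\colon S^{n-1}\to S^{r-1}$, and view $S^n=\Sigma S^{n-1}$, $S^r=\Sigma S^{r-1}$ with the standard co-$H$-structures given by pinching the suspension coordinate. The comultiplication of a suspension is natural: for any $f\colon A\to B$ one has $c_{\Sigma B}\circ\Sigma f\simeq(\Sigma f\vee\Sigma f)\circ c_{\Sigma A}$, because on $\Sigma A=A\wedge S^1$ the map $c_{\Sigma A}$ is induced by the pinch $S^1\to S^1\vee S^1$ on the suspension coordinate whereas $\Sigma f$ acts on the $A$-coordinate, so the two operations commute on the nose. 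Taking $f=s_0$ yields $c_r\circ s\simeq(s\vee s)\circ c_n$, hence the additivity.

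For part (2) I would instead push the addition onto the target. When $X$ is an $H$-space with multiplication $\mu$, the group structure on each $\pi_k(X)$ coming from the co-$H$-structure of $S^k$ agrees (Eckmann–Hilton) with the one induced by $\mu$, in which $\alpha_1+\alpha_2$ is represented by $S^r\xrightarrow{\Delta}S^r\times S^r\xrightarrow{a_1\times a_2}X\times X\xrightarrow{\mu}X$. Since the diagonal is natural, $\Delta\circ s=(s\times s)\circ\Delta$, so $(\alpha_1+\alpha_2)\sigma$ is represented by $\mu\circ\big((a_1\circ s)\times(a_2\circ s)\big)\circ\Delta$, which is exactly the $\mu$-description of $\alpha_1\sigma+\alpha_2\sigma$. (Equivalently: in an $H$-space the fold map $X\vee X\to X$ extends over $X\times X$, so the discrepancy between $c_r\circ s$ and $(s\vee s)\circ c_n$ is killed upon mapping into $X$.)

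The hard part will be the bookkeeping behind the two "standard" ingredients: (i) making the homotopy-commutativity of suspension comultiplications precise, i.e. keeping track of which coordinate is pinched and carrying along based homotopies, and (ii) the identification of the co-$H$ and $H$ additions on $\pi_k(X)$ together with the compatibility of chosen representatives. These are genuinely where the hypotheses are used — the identity $(\alpha_1+\alpha_2)\sigma=\alpha_1\sigma+\alpha_2\sigma$ fails for a general $\sigma$ (for instance when $\sigma$ has nonzero Hopf invariant), so the argument must exploit "$\sigma$ is a suspension" or "$X$ is an $H$-space" essentially, not formally. This is exactly why the statement is quoted from Whitehead \cite[Theorems X.8.2 and X.8.3]{Whi78} rather than reproved in detail here.
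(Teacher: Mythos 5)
The paper does not give a proof of this lemma; it is quoted verbatim from Whitehead \cite[Theorems X.8.2 and X.8.3]{Whi78}, so there is no in-paper argument to compare against. Your sketch is correct and is essentially the standard proof one finds in Whitehead: part (1) hinges on the naturality of the suspension comultiplication with respect to suspended maps, giving $c_r\circ s\simeq(s\vee s)\circ c_n$; part (2) replaces the co-$H$ addition by the $H$-space addition via Eckmann--Hilton and then uses naturality of the diagonal. Both reductions are sound, and you correctly flag that the hypotheses are essential (the identity fails for general $\sigma$, e.g.\ of nonzero Hopf invariant), which is exactly the content of Lemma \ref{thm:5}.
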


In particular, the Freudenthal suspension theorem guarantees that all elements of $\pi_{2n-1}(S^{n+1})$ are suspended. Consequently, we can derive the following corollary.

\begin{corollary}
  If $\sigma\in\pi_{2n-1}(S^{n+1})$ and $\alpha_1,\alpha_2\in\pi_{n+1}(X)$, then $(\alpha_1+\alpha_2)\sigma=\alpha_1\sigma+\alpha_2\sigma$.
\end{corollary}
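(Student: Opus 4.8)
The plan is to reduce the statement to part (1) of Lemma~\ref{thm:22} by verifying that every element of $\pi_{2n-1}(S^{n+1})$ is a suspended element, i.e.\ lies in the image of the suspension homomorphism $\Sigma:\pi_{2n-2}(S^{n})\to\pi_{2n-1}(S^{n+1})$. Once this is known, for any $\sigma\in\pi_{2n-1}(S^{n+1})$ we may write $\sigma=\Sigma\tau$, and Lemma~\ref{thm:22}(1) applied to $\sigma$ and arbitrary $\alpha_1,\alpha_2\in\pi_{n+1}(X)$ gives $(\alpha_1+\alpha_2)\sigma=\alpha_1\sigma+\alpha_2\sigma$ directly.

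The key step is therefore the surjectivity of $\Sigma:\pi_{2n-2}(S^{n})\to\pi_{2n-1}(S^{n+1})$. This is an instance of the Freudenthal suspension theorem: the suspension map $\Sigma:\pi_k(S^m)\to\pi_{k+1}(S^{m+1})$ is an isomorphism for $k\le 2m-2$ and an epimorphism for $k=2m-1$. Here we take $m=n$ and $k=2n-2$; since $2n-2\le 2n-2=2m-2$, the map $\Sigma:\pi_{2n-2}(S^n)\to\pi_{2n-1}(S^{n+1})$ is in fact an isomorphism, and in particular onto. (Note this already holds for all $n\ge 2$, so the hypothesis $n\ge 4$ operative elsewhere in the paper is not needed here.) Hence every $\sigma\in\pi_{2n-1}(S^{n+1})$ is suspended.

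I do not anticipate a genuine obstacle here: the only thing to be careful about is the indexing in the Freudenthal range, making sure the relevant group $\pi_{2n-1}(S^{n+1})$ sits exactly at the edge of the stable range so that suspension from $S^n$ is still surjective. With that in hand, the corollary is immediate from Lemma~\ref{thm:22}(1), as indicated in the paragraph preceding the statement.
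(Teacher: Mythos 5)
Your proof is correct and follows exactly the route the paper indicates: invoke the Freudenthal suspension theorem to see that $\Sigma:\pi_{2n-2}(S^n)\to\pi_{2n-1}(S^{n+1})$ is surjective (indeed an isomorphism, since $2n-2=2m-2$ with $m=n$), so every $\sigma$ is suspended, and then apply Lemma~\ref{thm:22}(1). You have simply spelled out the index-checking that the paper leaves implicit.
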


Hilton \cite{Hil55} obtained a formula when $\sigma$ need not be suspended. We only present the specific cases used in our paper.

\begin{lemma}\label{thm:5}
  If $n\geq 4$, $\alpha_1,\alpha_2\in\pi_n(X)$ and $\sigma\in \pi_{2n-1}(S^n)$, then
  \[
  (\alpha_1+\alpha_2)\sigma=\alpha_1\sigma+\alpha_2\sigma+H(\sigma)[\alpha_1,\alpha_2],
  \]
  where $H:\pi_{2n-1}(S^n)\to \mathbb{Z}$ is the Hopf invariant.
\end{lemma}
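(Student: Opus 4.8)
The plan is to deduce the formula from a single universal computation in $\pi_{2n-1}(S^n\vee S^n)$, and then to pin down the one undetermined coefficient by a cup-product calculation in a mapping cone. Recall first that, by definition of the group structure on $\pi_n$ via the comultiplication $\nabla\colon S^n\to S^n\vee S^n$, the sum $\alpha_1+\alpha_2$ is the composite $S^n\xrightarrow{\nabla}S^n\vee S^n\xrightarrow{(\alpha_1,\alpha_2)}X$, where $(\alpha_1,\alpha_2)$ restricts to $\alpha_j$ on the $j$-th wedge summand. Hence $(\alpha_1+\alpha_2)\sigma=(\alpha_1,\alpha_2)_*(\nabla_*\sigma)$, and since $(\alpha_1,\alpha_2)_*$ is a homomorphism sending the inclusions $i_1,i_2$ to $\alpha_1,\alpha_2$ and, by naturality of Whitehead products, the universal Whitehead product $[i_1,i_2]$ to $[\alpha_1,\alpha_2]$, it suffices to prove the single identity $\nabla_*\sigma=i_1\sigma+i_2\sigma+H(\sigma)\,[i_1,i_2]$ in $\pi_{2n-1}(S^n\vee S^n)$.

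Next I would pin down the group $\pi_{2n-1}(S^n\vee S^n)$ in this range. Since $S^n\times S^n$ is obtained from $S^n\vee S^n$ by attaching a single $2n$-cell along $[i_1,i_2]$, the pair $(S^n\times S^n,S^n\vee S^n)$ is $(2n-1)$-connected, so the relative Hurewicz theorem gives $\pi_{2n}(S^n\times S^n,S^n\vee S^n)\cong\mathbb{Z}$ with $\partial$ of a generator equal to $[i_1,i_2]$; as $\pi_{2n}(S^n\times S^n)$ is finite, the connecting map is injective, and the long exact sequence of the pair splits as
\[
\pi_{2n-1}(S^n\vee S^n)\cong\pi_{2n-1}(S^n)\oplus\pi_{2n-1}(S^n)\oplus\mathbb{Z}\{[i_1,i_2]\},
\]
the first two summands split off by $i_1,i_2$ with retractions the two projections $S^n\vee S^n\to S^n$. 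Applying these projections to $\nabla_*\sigma$ shows its first two components are $\sigma$ and $\sigma$ (each projection composed with $\nabla$ is $\iota_n$), so $\nabla_*\sigma=i_1\sigma+i_2\sigma+c\,[i_1,i_2]$ for a unique integer $c$, and the whole problem reduces to showing $c=H(\sigma)$.

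To identify $c$ I would pass to mapping cones. Set $Z=(S^n\vee S^n)\cup_{\nabla\sigma}D^{2n}$ with generators $u_1,u_2\in H^n(Z)$, $v\in H^{2n}(Z)$, and $Y=S^n\cup_\sigma D^{2n}$ with generators $u\in H^n(Y)$, $w\in H^{2n}(Y)$. The key point is that for an attaching map $\psi=i_1a+i_2b+c[i_1,i_2]$ the cup-product form on the resulting cone is $u_1^2=H(a)v$, $u_2^2=H(b)v$, $u_1u_2=c\,v$: this follows because $\psi\mapsto(\text{cup-product form on the cone})$ is additive in $\psi$ — witnessed by the map from the cone of $\psi_0+\psi_1$ to $(S^n\vee S^n)\cup_{\psi_0}D^{2n}\cup_{\psi_1}D^{2n}$ induced by the comultiplication of the top disk — together with the evaluations on the three generators: $(S^n\vee S^n)\cup_{i_1a}D^{2n}\simeq(S^n\cup_aD^{2n})\vee S^n$ contributes only to $u_1^2$, likewise for $i_2b$, and $(S^n\vee S^n)\cup_{[i_1,i_2]}D^{2n}\cong S^n\times S^n$ gives $u_1u_2=v$. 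Granting this, $u_1u_2=c\,v$ in $Z$. On the other hand $\nabla$ extends to a map $\bar\nabla\colon Y\to Z$ that is $\nabla$ on $S^n$ and a homeomorphism of top cells, so $\bar\nabla^*u_1=\bar\nabla^*u_2=u$ and $\bar\nabla^*v=w$; therefore
\[
c\,w=\bar\nabla^*(u_1u_2)=(\bar\nabla^*u_1)(\bar\nabla^*u_2)=u^2=H(\sigma)\,w
\]
by the very definition of the Hopf invariant, giving $c=H(\sigma)$ and finishing the proof.

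The hard part will be the additivity of the cup-product form in the attaching map used in the third step: although classical, it rests on the slightly delicate construction of a map out of the cone of a sum $\psi_0+\psi_1$ by collapsing an equatorial disk of the top cell to realize the comultiplication $S^{2n-1}\to S^{2n-1}\vee S^{2n-1}$, together with careful bookkeeping of orientations and of the degrees of the relevant maps on top cells. The universal-example reduction and the computation of $\pi_{2n-1}(S^n\vee S^n)$ are routine in the stated range (indeed they work for all $n\geq 2$, and for $n$ odd the formula merely reasserts additivity since then $H(\sigma)=0$).
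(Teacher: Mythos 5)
Your proof is correct, and it takes a genuinely different and more self-contained route than the paper. The paper simply cites Hilton's formula \cite[(6.1)]{Hil55}, which expands $(\alpha_1+\alpha_2)\sigma$ in terms of the Hilton--Hopf invariants attached to the basic Whitehead products of $S^n\vee S^n$; for $\sigma\in\pi_{2n-1}(S^n)$ all the higher invariants vanish for dimensional reasons (the higher basic products live on spheres of dimension $\ge 3n-2>2n-1$), and what remains is the Hopf invariant times $[\alpha_1,\alpha_2]$. You instead sidestep Hilton's theorem entirely: you reduce to the universal identity for $\nabla_*\sigma$ in $\pi_{2n-1}(S^n\vee S^n)$, compute that group directly from the long exact sequence of the pair $(S^n\times S^n,S^n\vee S^n)$ together with relative Hurewicz and Serre finiteness (so you need neither Hilton--Milnor nor the Hilton expansion), read off that the only unknown is the coefficient $c$ of $[i_1,i_2]$, and then determine $c$ cohomologically by comparing cup products in the mapping cones of $\sigma$ and of $\nabla\sigma$. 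The one nontrivial ingredient you rely on is that the cup-product form on $(S^n\vee S^n)\cup_\psi D^{2n}$ is additive in the attaching class $\psi$; you correctly flag this as the crux and sketch the standard proof via the comultiplication on the top cell, which is sound (restriction to the two subcones recovers the two forms, and the pinch has degree one onto each top cell, so pullback adds them). The net effect is a more elementary argument with cleanly isolated inputs, at the cost of being longer than the paper's one-line citation; both establish the lemma, and your remark that the statement holds for all $n\ge 2$ (and is just left-additivity when $n$ is odd, since then $H\equiv 0$) is also accurate.
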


\begin{lemma}\label{thm:18}
  If $\alpha_1,\alpha_2\in\pi_3(X)$, then
  \[
  (\alpha_1+\alpha_2)\nu'\eta_6=\alpha_1\nu'\eta_6+\alpha_2\nu'\eta_6+[\alpha_1,\alpha_2]\eta_5^2.
  \]
\end{lemma}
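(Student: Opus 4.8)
The plan is to reduce the statement to an instance of Hilton's formula (the general form underlying Lemma \ref{thm:5}), tracking how the Hopf--Hilton invariants of the specific element $\nu'\eta_6 \in \pi_7(S^3)$ control the error term. Recall that for $\alpha_1,\alpha_2 \in \pi_m(X)$ and $\sigma \in \pi_k(S^m)$, Hilton's theorem expresses $(\alpha_1+\alpha_2)\sigma$ as $\alpha_1\sigma + \alpha_2\sigma$ plus a sum of Whitehead products of $\alpha_1$ and $\alpha_2$ weighted by the Hilton--Hopf invariants of $\sigma$; in the relevant range only the basic product $[\alpha_1,\alpha_2]$ (of weight two) contributes a term of the right dimension, namely $h([\alpha_1,\alpha_2] \circ \tau)$ where $\tau \in \pi_k(S^{2m-1})$ is the appropriate Hilton--Hopf invariant of $\sigma$. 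So the crux is to identify, for $m=3$, $k=7$, $\sigma = \nu'\eta_6$, the component $\tau \in \pi_7(S^5)$ of $\sigma$ under the Hilton decomposition of $\pi_7(S^3\vee S^3)$, and to show $\tau = \eta_5^2$.

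First I would set up the Hilton splitting
\[
\pi_7(S^3\vee S^3) \cong \pi_7(S^3)\oplus\pi_7(S^3)\oplus\pi_7(S^5)\oplus(\text{higher-weight terms}),
\]
where the $\pi_7(S^5)$ summand is generated by composition with $[\iota_3,\iota_3]: S^5 \to S^3\vee S^3$, and the higher-weight basic products (weight $\geq 3$, lowest being $[[\iota_3,\iota_3],\iota_3]$ on $S^7$) contribute a $\pi_7(S^7)\cong\mathbb{Z}$ summand and nothing else in this degree. Applying the folding-type argument that yields Lemma \ref{thm:5}, the difference $(\alpha_1+\alpha_2)\sigma - \alpha_1\sigma - \alpha_2\sigma$ is the image under $(\alpha_1,\alpha_2)_*$ of the component of $\sigma$ lying in the summands involving both wedge factors; the weight-two part gives $[\alpha_1,\alpha_2]\circ\tau$ and the weight-three part gives terms of the form $[[\alpha_1,\alpha_2],\alpha_i]\circ(\text{element of }\pi_7(S^7))$. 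I would argue the weight-three contribution vanishes: the relevant Hilton--Hopf invariant of $\nu'\eta_6$ in $\pi_7(S^7)\cong\mathbb{Z}$ is detected by a stable/ $EHP$ computation, and since $\nu'\eta_6$ is a torsion ($2$-torsion) class this invariant must be $0$ (a $\mathbb{Z}$-valued invariant kills torsion). That leaves only the $[\alpha_1,\alpha_2]\circ\tau$ term.

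Next I would pin down $\tau \in \pi_7(S^5) \cong \mathbb{Z}_2\{\eta_5^2\}$. The relevant Hilton--Hopf invariant $H_2: \pi_7(S^3) \to \pi_7(S^5)$ is, up to suspension conventions, the generalized Hopf invariant; I would compute $H_2(\nu'\eta_6)$ using naturality under composition together with the known value on $\nu'$. Since $\nu' \in \pi_6(S^3)$ has $H_2(\nu') = \eta_5 \in \pi_6(S^5)$ (the Hopf invariant of $\nu'$ is $\eta$, a standard fact from Toda's tables, reflecting the relation $2\nu' = \eta_3 \circ (\text{something})$ and the $EHP$ sequence), composing with $\eta_6$ and using that $H_2$ commutes with suspension-compatible composition on the right gives $H_2(\nu'\eta_6) = H_2(\nu')\circ\eta_? = \eta_5\circ\eta_6 = \eta_5^2$. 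Thus $\tau = \eta_5^2$, and substituting back yields $(\alpha_1+\alpha_2)\nu'\eta_6 = \alpha_1\nu'\eta_6 + \alpha_2\nu'\eta_6 + [\alpha_1,\alpha_2]\eta_5^2$, as claimed. (One could also avoid invoking $H_2(\nu')$ directly by instead using the known relation for $\nu'$ itself, $(\alpha_1+\alpha_2)\nu' = \alpha_1\nu' + \alpha_2\nu' + [\alpha_1,\alpha_2]\eta_5$ in $\pi_6$, composing on the right with $\eta_6$, and applying part (1) of Lemma \ref{thm:22} to the already-expanded terms since $\eta_6$ is a suspension; this is the cleaner route and I would present it that way.)

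The main obstacle is the bookkeeping around which Hilton--Hopf invariant governs the correction term and verifying that all higher-weight (weight $\geq 3$) contributions vanish — the first because one must be careful about suspension degrees and the identification of $H_2$ on $\pi_6(S^3) \to \pi_6(S^5)$ versus $\pi_7(S^3)\to\pi_7(S^5)$, the second because a priori $[[\alpha_1,\alpha_2],\alpha_1]$-type terms could survive if the corresponding invariant of $\nu'\eta_6$ were nonzero; the torsion argument handles it but should be stated carefully. If one instead takes the "compose the $\pi_6$ identity with $\eta_6$" route, the obstacle shifts to justifying that the Whitehead-product term $[\alpha_1,\alpha_2]\eta_5$ composed with $\eta_6$ behaves additively and equals $[\alpha_1,\alpha_2]\eta_5^2$ with no further correction, which follows since $\eta_6 \in \pi_7(S^6)$ is a suspension and Lemma \ref{thm:22}(1) applies, and $\eta_5\eta_6 = \eta_5^2$ by definition in Lemma \ref{thm:16}(2).
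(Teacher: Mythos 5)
Your proposal is correct and follows essentially the same approach as the paper: invoke Hilton's expansion of $(\alpha_1+\alpha_2)\sigma$, kill the weight-three terms by observing that any homomorphism from the torsion group $\pi_7(S^3)\cong\mathbb{Z}_2$ to $\pi_7(S^7)\cong\mathbb{Z}$ vanishes, and identify the weight-two Hilton--Hopf invariant as $H(\nu'\eta_6)=\eta_5^2$. The only cosmetic difference is that the paper simply cites Toda \cite[(5.7)]{Tod62} for $H(\nu'\eta_6)=\eta_5^2$, whereas you derive it from $H(\nu')=\eta_5$ via naturality (or, in your cleaner alternative, by applying the $\pi_6$ version of the formula and composing on the right with the suspension $\eta_6$ using Lemma \ref{thm:22}(1)); both are valid.
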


\begin{proof}
  It follows immediately from \cite[(6.1)]{Hil55} that
  \begin{multline*}
    (\alpha_1+\alpha_2)\nu'\eta_6=\alpha_1\nu'\eta_6+\alpha_2\nu'\eta_6+[\alpha_1,\alpha_2]\circ H_0(\nu'\eta_6) \\
    +[\alpha_1,[\alpha_1,\alpha_2]]\circ H_1(\nu'\eta_6)+[\alpha_2,[\alpha_1,\alpha_2]]\circ H_2(\nu'\eta_6),
  \end{multline*}
  where $H_0:\pi_7(S^3)\to \pi_7(S^5)$ and $H_1, H_2: \pi_7(S^3)\to \pi_7(S^7)$ are homomorphisms. Since $\pi_7(S^3)\cong \mathbb{Z}_2$ and $\pi_7(S^7)\cong \mathbb{Z}$, it is necessary to have $H_1,H_2=0$. The map $H_0$ is called the Hilton-Hopf invariant and coincide with the James-Hopf invariant $H$. As $H(\nu'\eta_6)=\eta_5^2$ (cf \cite[(5.7)]{Tod62}), the proof is thereby complete.
\end{proof}

By repeatedly using Lemma \ref{thm:18}, we have:

\begin{corollary}\label{thm:23}
  If $\alpha_i\in\pi_3(X)$ for $1\leq i\leq k$, then
  \[
  \bigg(\sum_{i=1}^{k}\alpha_i\bigg)\nu'\eta_6=\sum_{i=1}^{k}\alpha_i\nu'\eta_6+\sum_{1\leq i<j\leq k}[\alpha_i,\alpha_j]\eta_5^2.
  \]
\end{corollary}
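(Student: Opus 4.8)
The plan is a straightforward induction on $k$, with Lemma \ref{thm:18} as the engine. The cases $k=1$ is trivial and $k=2$ is exactly Lemma \ref{thm:18}. For the inductive step, suppose the formula holds for $k$ and take $\alpha_1,\dots,\alpha_{k+1}\in\pi_3(X)$. Put $\beta=\sum_{i=1}^{k}\alpha_i$, so that $\sum_{i=1}^{k+1}\alpha_i=\beta+\alpha_{k+1}$, and apply Lemma \ref{thm:18} to the two classes $\beta,\alpha_{k+1}$ to get
\[
\Big(\sum_{i=1}^{k+1}\alpha_i\Big)\nu'\eta_6=\beta\nu'\eta_6+\alpha_{k+1}\nu'\eta_6+[\beta,\alpha_{k+1}]\eta_5^2 .
\]

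Next I would rewrite the three terms on the right. By the inductive hypothesis, $\beta\nu'\eta_6=\sum_{i=1}^{k}\alpha_i\nu'\eta_6+\sum_{1\le i<j\le k}[\alpha_i,\alpha_j]\eta_5^2$. For the last term, the Whitehead product $[-,-]\colon\pi_3(X)\times\pi_3(X)\to\pi_5(X)$ is biadditive, so $[\beta,\alpha_{k+1}]=\sum_{i=1}^{k}[\alpha_i,\alpha_{k+1}]$ in $\pi_5(X)$. Since $\eta_5^2=\eta_5\eta_6$ with $\eta_5=\Sigma^3\eta_2$ a suspended element (equivalently, $\pi_7(S^5)$ lies in the stable range), Lemma \ref{thm:22}(1) applies and composition with $\eta_5^2$ is additive on $\pi_5(X)$; hence $[\beta,\alpha_{k+1}]\eta_5^2=\sum_{i=1}^{k}[\alpha_i,\alpha_{k+1}]\eta_5^2$. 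Substituting everything back and merging the two double sums via
\[
\sum_{1\le i<j\le k}[\alpha_i,\alpha_j]\eta_5^2+\sum_{i=1}^{k}[\alpha_i,\alpha_{k+1}]\eta_5^2=\sum_{1\le i<j\le k+1}[\alpha_i,\alpha_j]\eta_5^2
\]
gives the asserted identity for $k+1$.

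There is essentially no genuine obstacle here; the only points that need care are (i) expanding the Whitehead product $[\beta,\alpha_{k+1}]$ by its bilinearity \emph{before} composing with $\eta_5^2$, and (ii) verifying that $\eta_5^2$ is suspended, so that the clean additivity of Lemma \ref{thm:22}(1) governs the composition $[\beta,\alpha_{k+1}]\eta_5^2$ and no Hopf-invariant correction term of the type appearing in Lemma \ref{thm:5} is introduced. Both are immediate.
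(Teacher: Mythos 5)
Your proof is correct and matches the paper's intent exactly: the paper gives no written argument beyond the remark "By repeatedly using Lemma \ref{thm:18}," and your induction, together with bilinearity of the Whitehead product and the additivity from Lemma \ref{thm:22}(1) for the suspended class $\eta_5^2$, is precisely the right way to spell out that repeated application. No gaps.
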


\subsection{Self-maps and self-homotopy equivalences}

For topological spaces $X$ and $Y$, we denote by $[X,Y]$ the set of homotopy classes of maps from $X$ to $Y$. It is known that if $Y$ is simply connected, the set $[X,Y]$ remains unaltered if we add the assumption that the maps and homotopies are basepoint preserving. Let $\mathcal{E} (X)$ be the subset of $[X,X]$ that consists of all self-homotopy equivalences of $X$. Notably, $\mathcal{E} (X)$ forms a group under the composition of self-maps; however,this group structure is not required in this paper. The following lemma is well-known (cf \cite[Lemma 4.10]{Yam90}), and is fundamental to our subsequent discussion on homotopy types.

\begin{lemma}\label{thm:12}
  Let $X$ be a simply connected $m$-dimensional CW-complex and $n\geq m+2$. Then $X\cup_\phi D^n\simeq X\cup_\psi D^n$ if and only if there exists $f\in\mathcal{E}(X)$ such that $\psi=\pm f\phi\in \pi_{n-1}(X)$.
\end{lemma}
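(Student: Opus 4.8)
The plan is to prove the two implications separately; the ``if'' direction is elementary, while the ``only if'' direction rests on cellular approximation together with the relative Hurewicz theorem. Write $C_\phi=X\cup_\phi D^n$ and $C_\psi=X\cup_\psi D^n$. For the ``if'' direction, suppose $\psi=\pm f\phi$ with $f\in\mathcal E(X)$. Precomposing the characteristic map of the $n$-cell with an orientation-reversing self-homeomorphism of $D^n$ gives a homeomorphism $C_\phi\cong X\cup_{-\phi}D^n$, so it suffices to treat $\psi=f\phi$. The map that is $f$ on $X$ and the identity on $D^n$ is then a well-defined continuous map $C_\phi\to X\cup_{f\phi}D^n$ (since $f\circ\phi=(f\phi)$ on $\partial D^n$), and it is a map of cofibre sequences $X\to C_\phi\to S^n$ and $X\to X\cup_{f\phi}D^n\to S^n$ whose restriction to the subspace $X$ is $f$ and whose induced map on the quotient $S^n$ is the identity; both being homotopy equivalences, comparison of homology long exact sequences plus Whitehead's theorem (the spaces are simply connected, as $n\ge 3$) shows the middle map is a homotopy equivalence.

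For the ``only if'' direction, let $h\colon C_\phi\to C_\psi$ be a homotopy equivalence. Since $n\ge m+2>m=\dim X$, the subcomplex $X$ is the $(n-1)$-skeleton of both $C_\phi$ and $C_\psi$, so by cellular approximation we may assume $h$ is cellular, whence $h(X)\subseteq X$; set $f:=h|_X\colon X\to X$. Because $\dim X\le n-2$, the long exact homology sequences of the pairs show that the inclusions induce isomorphisms $H_*(X)\xrightarrow{\cong}H_*(C_\phi)$ and $H_*(X)\xrightarrow{\cong}H_*(C_\psi)$ in every degree (both vanish above degree $n-2$, and below that the $n$-cell contributes nothing). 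From $h\circ\iota_X=\iota_X\circ f$ and the fact that $h_*$ is an isomorphism it follows that $f_*$ is an isomorphism on $H_*(X)$; since $X$ is simply connected, Whitehead's theorem yields $f\in\mathcal E(X)$.

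It remains to identify $\psi$ with $\pm f\phi$ in $\pi_{n-1}(X)$. Let $\Phi,\Psi\colon(D^n,S^{n-1})\to(C_\phi,X),\,(C_\psi,X)$ be the characteristic maps, so $\Phi|_{S^{n-1}}=\phi$ and $\Psi|_{S^{n-1}}=\psi$. Cellular approximation shows the pairs $(C_\phi,X)$ and $(C_\psi,X)$ are $(n-1)$-connected, so since $X$ is simply connected the relative Hurewicz theorem gives $\pi_n(C_\psi,X)\cong H_n(C_\psi,X)\cong\mathbb Z$, generated by $[\Psi]$. Hence $[h\circ\Phi]=d[\Psi]$ in $\pi_n(C_\psi,X)$ for some $d\in\mathbb Z$, and applying the boundary homomorphism $\partial\colon\pi_n(C_\psi,X)\to\pi_{n-1}(X)$, using $h|_X=f$, gives $f\phi=\partial[h\circ\Phi]=d\,\partial[\Psi]=d\psi$. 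Finally $d=\pm1$: under the Hurewicz isomorphisms $d$ is the multiplicity of $h_*\colon H_n(C_\phi,X)\to H_n(C_\psi,X)$, and since $H_n(X)=H_{n-1}(X)=0$ this map is identified via the long exact sequences with the isomorphism $h_*\colon H_n(C_\phi)\xrightarrow{\cong}H_n(C_\psi)$. Thus $\psi=\pm f\phi$, completing the proof. The one genuinely delicate point is the passage from ``$h$ a homotopy equivalence'' to ``$f=h|_X$ a homotopy equivalence'': $h$ need not preserve $X$ on the nose, which forces the cellular-approximation step, and the hypothesis $n\ge m+2$ is precisely what makes the homology of $X$ and of $C_\phi$ agree in the relevant range so that Whitehead's theorem applies; the subsequent identification of $d$ with $\pm1$ is then a routine naturality argument for the Hurewicz homomorphism.
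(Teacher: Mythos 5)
The paper does not prove this lemma itself; it cites it as well-known (Yamaguchi, Lemma~4.10), so there is no in-text proof to compare against. Your argument is a correct and standard proof of the cited result: the ``if'' direction via a map of cofibre sequences, the five lemma, and Whitehead's theorem, and the ``only if'' direction via cellular approximation, the relative Hurewicz theorem, and naturality of the boundary homomorphism $\partial\colon\pi_n(C_\psi,X)\to\pi_{n-1}(X)$ are exactly the right tools, and the final identification of $d=\pm1$ through the isomorphism $H_n(C_\phi)\cong H_n(C_\phi,X)$ is the correct way to close the argument.

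One small misstatement worth fixing: you assert that the inclusion induces an isomorphism $H_*(X)\to H_*(C_\phi)$ ``in every degree,'' with the parenthetical ``both vanish above degree $n-2$.'' This is false in degree $n$, where $H_n(X)=0$ but $H_n(C_\phi)\cong\mathbb Z$. It does no harm to the proof, since $H_*(X)$ is concentrated in degrees $\le m\le n-2$ and the inclusions \emph{are} isomorphisms there, which already forces $f_*=h_*|_X$ to be an isomorphism on all of $H_*(X)$; but the parenthetical should read ``$H_*(X)$ vanishes above degree $n-2$, and in degrees $\le n-2$ attaching the $n$-cell does not change homology.''
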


\section{Proof of Theorem \ref{thm:1}}\label{sec:2}

Let $X$ be an $(n-2)$-connected $2n$-dimensional Poincar\'e complex with torsion-free homology ($n\geq 4$). Assuming that $\rank H_{n-1}(X)=k$ and $\rank H_n(X)=l$, it follows from Poincar\'e duality that the integral homology groups of $X$ are
\[
H_i(X)\cong
\begin{cases}
  \mathbb{Z}, & i=0,2n \\
  \mathbb{Z}^k, & i=n-1,n+1 \\
  \mathbb{Z}^l, &  i=n \\
  0, & \mbox{otherwise}.
\end{cases}
\]
Consequently, $X$ possesses a ``minimal cell structure" up to homotopy equivalence (cf \cite[Proposition 4C.1]{Hat02})
\[
X\simeq \bigg(\bigvee_{i=1}^k S^{n-1}_i\bigg)\bigcup \bigg(\coprod_{s=1}^{l}D^n_s\bigg)\bigcup \bigg(\coprod_{j=1}^{k}D^{n+1}_j\bigg)\bigcup D^{2n}.
\]
We use symbols $S^p_i$ and $D^q_j$ to denote $p$-spheres and $q$-disks labeled $i$ and $j$ respectively.
Let $X^{(n+1)}$ denote the $(n+1)$-skeleton of the space on the right hand side.

\begin{proposition}\label{thm:8}
  $X^{(n+1)}\simeq \big(\bigvee_{i=1}^r (S^{n-1}_i\vee S^{n+1}_i)\big) \vee\big(\bigvee_{i=r+1}^k \Sigma^{n-3}\mathbb{C}P^2_i\big)\vee\big(\bigvee_{j=1}^l S^n_j\big)$ for some integer $1\leq r\leq k$.
\end{proposition}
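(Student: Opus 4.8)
The plan is to exploit the minimal cell structure displayed above: identify $X^{(n)}$, classify the attaching maps of the $(n+1)$-cells inside $\pi_n(X^{(n)})$, and then normalise those attaching maps by self-homotopy equivalences. Since the cell structure is minimal, every cellular boundary map vanishes. In particular each $n$-cell attaches to $X^{(n-1)}=\bigvee_{i=1}^{k}S^{n-1}_i$ by a map that is zero on $H_{n-1}$; as $\bigvee_i S^{n-1}_i$ is $(n-2)$-connected with $n-1\geq 3$, Hurewicz gives $\pi_{n-1}\big(\bigvee_i S^{n-1}_i\big)\cong H_{n-1}\cong\mathbb{Z}^{k}$, so each such map is null-homotopic and
\[
X^{(n)}\simeq\Big(\bigvee_{i=1}^{k}S^{n-1}_i\Big)\vee\Big(\bigvee_{j=1}^{l}S^{n}_j\Big).
\]

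Next I would compute $\pi_n(X^{(n)})$. The pair consisting of the product $\prod_i S^{n-1}_i\times\prod_j S^{n}_j$ and the wedge $X^{(n)}$ has connectivity at least $2n-3$, and $n\geq 4$ guarantees $2n-3> n$, so $\pi_n(X^{(n)})\cong\bigoplus_i\pi_n(S^{n-1}_i)\oplus\bigoplus_j\pi_n(S^{n}_j)$; by Lemma \ref{thm:16}(1) this is $\bigoplus_{i=1}^{k}\mathbb{Z}_2\{\eta^{(i)}_{n-1}\}\oplus\bigoplus_{j=1}^{l}\mathbb{Z}\{\iota^{(j)}_{n}\}$, the superscripts denoting the inclusions into the respective wedge summands. (This is the one place where $n\geq 4$ is used.) Now the $(n+1)$-cells are attached by classes $\phi_t=\sum_i a_{ti}\eta^{(i)}_{n-1}+\sum_j b_{tj}\iota^{(j)}_{n}$ for $1\leq t\leq k$. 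Collapsing $X^{(n-1)}$ sends $\phi_t$ to $\sum_j b_{tj}\iota^{(j)}_{n}$, so vanishing of $\partial_{n+1}$ forces all $b_{tj}=0$; thus $\phi_t=\sum_i a_{ti}\eta^{(i)}_{n-1}$ and the attaching data is a matrix $A=(a_{ti})$ over $\mathbb{Z}_2$.

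It remains to normalise $A$. Viewing $(\phi_1,\dots,\phi_k)$ as a single map $\Phi\colon\bigvee_{t=1}^{k}S^{n}_t\to X^{(n)}$ with mapping cone $X^{(n+1)}$, the homotopy type of the cone is unchanged under precomposition of $\Phi$ by a self-equivalence of $\bigvee_t S^{n}_t$ or postcomposition by one of $X^{(n)}$. Precomposition by the self-equivalence realising $Q\in GL_k(\mathbb{Z})$ performs the integer row operation $(\phi_t)\mapsto\big(\sum_{t'}Q_{t't}\phi_{t'}\big)$ (using that $\Phi_*$ is a homomorphism); postcomposition by the self-equivalence of $X^{(n)}$ that is $P\in GL_k(\mathbb{Z})$ on $\bigvee_i S^{n-1}_i$ and the identity on $\bigvee_j S^{n}_j$ sends $\eta^{(i)}_{n-1}=\iota^{(i)}_{n-1}\circ\eta_{n-1}$ to $\big(\sum_{i'}P_{i'i}\iota^{(i')}_{n-1}\big)\circ\eta_{n-1}=\sum_{i'}P_{i'i}\,\eta^{(i')}_{n-1}$, the last equality being Lemma \ref{thm:22}(1) applied to the suspended element $\eta_{n-1}$; this realises column operations on $A$. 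Since $GL_k(\mathbb{Z})\twoheadrightarrow GL_k(\mathbb{Z}_2)$, all $\mathbb{Z}_2$-row and column operations are available, and over the field $\mathbb{Z}_2$ they put $A$ into the form $\mathrm{diag}(I_\rho,0)$ with $\rho=\rank_{\mathbb{Z}_2}A$. For this normalised $\Phi$ the cone splits: for $t\leq\rho$ the cell $D^{n+1}_t$ is glued to $S^{n-1}_t$ by $\eta_{n-1}$, giving $S^{n-1}_t\cup_{\eta_{n-1}}D^{n+1}_t=\Sigma^{n-3}\mathbb{C}P^2$; for $t>\rho$ it is glued trivially, giving $S^{n+1}_t$; the remaining $k-\rho$ spheres $S^{n-1}_i$ and the $l$ spheres $S^{n}_j$ are left untouched. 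Pairing off an $S^{n-1}$ with an $S^{n+1}$ and setting $r=k-\rho$ gives exactly the claimed decomposition. (The number of $\Sigma^{n-3}\mathbb{C}P^2$ summands equals $\rank_{\mathbb{Z}_2}\big(\Sq^2\colon H^{n-1}(X;\mathbb{Z}_2)\to H^{n+1}(X;\mathbb{Z}_2)\big)$; the lower bound $r\geq 1$ is then squeezed out of Poincar\'e duality, using among other things that $\Sq^1=0$ by torsion-freeness, so that $\Sq^2$ is self-adjoint for the intersection pairing.)

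The main obstacle is making the normalisation step rigorous: one must exhibit genuine self-homotopy equivalences of the two wedges (assembled from pinch maps and degree maps) realising the desired matrices, and verify that they act on $\pi_n$ by the formulas above — the crucial point being that $\eta_{n-1}$ is a suspension, so Lemma \ref{thm:22}(1) makes precomposition with it additive; without this the column operations would carry Whitehead-product error terms. A secondary point is the identification of $\pi_n(X^{(n)})$, where one must be sure that no Whitehead-product summand reaches degree $n$ — which is again exactly the inequality $n\geq 4$.
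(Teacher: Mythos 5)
Your proof is correct but takes a genuinely different route from the paper's. The paper proves the proposition in one step by invoking Chang's classification of $\mathbf{A}_n^2$-complexes (the homotopy category of $(n-1)$-connected finite CW complexes of dimension at most $n+2$): a complex there with torsion-free homology decomposes into a wedge of $S^n$, $S^{n+1}$, $S^{n+2}$, and $\Sigma^{n-2}\mathbb{C}P^2$, and the known homology of $X^{(n+1)}$ then forces the stated shape. You instead re-derive the relevant case of Chang's theorem from scratch: you use minimality to show $X^{(n)}$ is a wedge of spheres, compute $\pi_n(X^{(n)})\cong\mathbb{Z}_2^{k}\oplus\mathbb{Z}^{l}$ from the $(2n-3)$-connectivity of the pair $(\prod,\bigvee)$ (which is precisely where $n\geq 4$ enters), kill the $\mathbb{Z}^{l}$ part again by minimality, and normalize the remaining mod-$2$ matrix via row and column operations realized by self-equivalences of the two wedges. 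The two technical points you isolate are exactly the right ones: postcomposition by $\Phi$ is automatically a homomorphism on $\pi_n$, while the column operations require additivity under precomposition with $\eta_{n-1}$, which holds because $\eta_{n-1}$ is a suspension (Lemma~\ref{thm:22}(1)); without that, Whitehead-product correction terms would appear, but they live above dimension $n$ anyway. The payoff of your route is self-containedness, at the cost of length.

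One caveat on the stated range $1\leq r\leq k$: your parenthetical argument for $r\geq 1$ --- that $\Sq^2$ is self-adjoint for the Poincar\'e pairing (via $v_2=0$ and $\Sq^1=0$) and hence cannot be an isomorphism --- does not follow as written, since a self-adjoint map over $\mathbb{Z}_2$ can certainly be nondegenerate. The paper's Chang-plus-homology argument does not establish $r\geq 1$ either, and the bound is never used downstream (when $r=0$ the matrix $A_\phi$ in Section~\ref{sec:2} is empty and Theorem~\ref{thm:1} is immediate), so this is at most a matter of stated range, not substance; still, your sketch as given would not close it.
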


\begin{proof}
  This lemma is a direct consequence of the classification of $\mathbf{A}_n^2$-complexes by Chang \cite{Cha50}. Recall that $\mathbf{A}_n^2$ represents the homotopy category which includes $(n-1)$-connected finite CW-complexes with dimension at most $n+2$. Note that $X^{(n+1)}$ belongs to $\mathbf{A}_{n-1}^2$. According to Chang's result, complexes in $\mathbf{A}_n^2$ ($n\geq 3$) with torsion-free homology can be decomposed into wedge sums of $S^n$, $S^{n+1}$, $S^{n+2}$, and $\Sigma^{n-2}\mathbb{C}P^2$. Therefore, together with the homology constraints $H_{n-1}(X^{(n+1)})\cong H_{n+1}(X^{(n+1)})\cong\mathbb{Z}^k$ and $H_n(X^{(n+1)})\cong \mathbb{Z}^l$, we can derive our proposition.
\end{proof}

Let
\[
K_1=\big(\bigvee_{i=1}^r (S^{n-1}_i\vee S^{n+1}_i)\big) \vee\big(\bigvee_{i=r+1}^k \Sigma^{n-3}\mathbb{C}P^2_i\big), \quad K_2=\bigvee_{j=1}^l S^n_j.
\]
From now on, we identify $X^{(n+1)}$ with $K_1\vee K_2$ and assume that
\[
X=(K_1\vee K_2)\cup_\phi D^{2n}.
\]
Denote by $\iota_n^i$ the inclusion map that identifies $S^n$ with $S^n_i$. Lemmas \ref{thm:24} and \ref{thm:2} are direct consequences of the Hilton-Milnor theorem \cite{Hil55,Mil72} (see \cite[Section 4]{Bar60} for the version we need).

\begin{lemma}\label{thm:24}
  $\pi_{2n-1}(K_2)\cong (\bigoplus_{i=1}^l\pi_{2n-1}(S^n_i))\oplus(\bigoplus_{1\leq i<j\leq l}\mathbb{Z}\{[\iota_n^i,\iota_n^j]\})$.
\end{lemma}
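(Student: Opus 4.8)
The plan is to apply the Hilton--Milnor theorem directly, in the form recorded in \cite[Section 4]{Bar60}: for a wedge $\bigvee_{j=1}^l S^n_j$ of spheres of the common dimension $n$ there is a natural isomorphism
\[
\pi_k\Big(\bigvee_{j=1}^l S^n_j\Big)\cong\bigoplus_{w}\pi_k\big(S^{d(w)}\big),
\]
where $w$ ranges over a Hall basis of basic products in the free (graded) Lie ring on generators $x_1,\dots,x_l$ (with $x_j$ corresponding to $\iota_n^j$), $d(w)=\ell(w)(n-1)+1$ is the dimension of the sphere attached to a basic product $w$ of weight $\ell(w)$, and the summand indexed by $w$ maps into $\pi_k\big(\bigvee S^n_j\big)$ by composition with the iterated Whitehead product $S^{d(w)}\to\bigvee S^n_j$ determined by $w$. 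So I would set $k=2n-1$ and sort the basic products by weight.

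The weight-one basic products are exactly $x_1,\dots,x_l$, with $d(x_i)=n$, contributing the summand $\bigoplus_{i=1}^l\pi_{2n-1}(S^n_i)$; note that the Whitehead square $[\iota_n^i,\iota_n^i]$, which may be nonzero when $n$ is even, already lives inside $\pi_{2n-1}(S^n_i)$ and does not produce a separate summand. The weight-two basic products are precisely the $[x_i,x_j]$ with $1\le i<j\le l$, of which there are $\binom{l}{2}$; each has $d([x_i,x_j])=2(n-1)+1=2n-1$, hence contributes a copy of $\pi_{2n-1}(S^{2n-1})\cong\mathbb{Z}$ generated by the Whitehead product $[\iota_n^i,\iota_n^j]$. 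Finally, any basic product $w$ of weight $\ell(w)=q\ge 3$ has $d(w)=q(n-1)+1\ge 3(n-1)+1=3n-2>2n-1$ (using $n\ge 2$), so $\pi_{2n-1}(S^{d(w)})=0$, being a homotopy group of a sphere in a dimension below its connectivity. Assembling the surviving contributions yields exactly the asserted decomposition.

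There is no serious obstacle here; the only points requiring care are the combinatorial bookkeeping of the Hall basis (verifying that the weight-two basic products are exactly the $[x_i,x_j]$ with $i<j$, and in particular that $[x_i,x_i]$ is not among them) and the dimension estimate that annihilates all weight $\ge 3$ contributions. I would also take care to record that the isomorphism is the one carrying the weight-two generators to the honest Whitehead products $[\iota_n^i,\iota_n^j]\in\pi_{2n-1}\big(\bigvee S^n_j\big)$, since this explicit identification is what is used later when analysing the attaching map $\phi$ of $X$.
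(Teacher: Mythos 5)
Your proof is correct and follows the same route as the paper, which simply cites the Hilton--Milnor theorem (via \cite[Section 4]{Bar60}) without spelling out the details; you supply the bookkeeping (weight-one gives $\bigoplus_i\pi_{2n-1}(S^n_i)$, weight-two gives $\bigoplus_{i<j}\mathbb{Z}\{[\iota_n^i,\iota_n^j]\}$, weight $\ge 3$ vanishes by connectivity) that the paper leaves implicit.
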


\begin{lemma}\label{thm:2}
    $\pi_{2n-1}(X^{(n+1)})\cong\pi_{2n-1}(K_1)\oplus\pi_{2n-1}(K_2)\oplus G$, where
\[
G\cong \bigoplus_{\substack{1\leq i\leq r\\1\leq j\leq l}}\mathbb{Z}_2\{[\iota_{n-1}^i,\iota_n^j]\eta_{2n-2}\}.
\]
\end{lemma}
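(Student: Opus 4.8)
The plan is to apply the Hilton--Milnor theorem to $X^{(n+1)}=K_1\vee K_2$ after first splitting $K_1$ and $K_2$ into their indecomposable summands $S^{n-1}_i,\,S^{n+1}_i$ $(1\le i\le r)$, $\Sigma^{n-3}\mathbb{C}P^2_i$ $(r<i\le k)$, and $S^n_j$ $(1\le j\le l)$. Since $n\ge4$ each of these is a suspension, so the form of the Hilton--Milnor theorem used in \cite{Bar60} gives a decomposition $\pi_{2n-1}(X^{(n+1)})\cong\bigoplus_w\pi_{2n-1}(\Sigma M_w)$, the sum ranging over a Hall basis of basic products $w$ in the summands, with $M_w$ the associated iterated reduced smash product of the desuspensions. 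By naturality with respect to the collapse maps $X^{(n+1)}\to K_i$, which split the inclusions $K_i\hookrightarrow X^{(n+1)}$, the sub-sum over the basic products built only from summands of $K_1$ is the image of the split monomorphism $\pi_{2n-1}(K_1)\hookrightarrow\pi_{2n-1}(X^{(n+1)})$, and likewise for $K_2$; I would then \emph{define} $G$ to be the sub-sum over the remaining ``mixed'' basic products (those using at least one summand from each of $K_1$ and $K_2$), so that the asserted splitting holds and everything reduces to identifying $G$.

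The identification is a connectivity count plus one computation. Let $a$ denote the bottom-cell dimension of a desuspended summand: $a\ge n-2$ for summands of $K_1$, with $a=n-2$ exactly for $S^{n-1}_i$ and $\Sigma^{n-3}\mathbb{C}P^2_i$, while $a=n-1$ for the summands $S^n_j$ of $K_2$. A mixed basic product of weight $s$ uses at least one summand from each side, so $\Sigma M_w$ is $\big(\sum a_t\big)$-connected with $\sum a_t\ge(n-1)+(s-1)(n-2)$; for $s\ge3$ this is $\ge3n-5\ge2n-1$ (as $n\ge4$), forcing $\pi_{2n-1}(\Sigma M_w)=0$, and for $s=2$ the same holds unless the $K_1$-summand has $a=n-2$. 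Thus $G$ is the sum of the contributions of the weight-two products pairing an $S^{n-1}_i$ $(i\le r)$ or a $\Sigma^{n-3}\mathbb{C}P^2_i$ $(i>r)$ with an $S^n_j$. In the first case $M_w=S^{n-2}_i\wedge S^{n-1}_j=S^{2n-3}$, so $\Sigma M_w=S^{2n-2}$; the weight-two summand corresponds to the generalized Whitehead product $\Sigma(S^{n-2}_i\wedge S^{n-1}_j)\to S^{n-1}_i\vee S^n_j$, which on bottom cells is the ordinary Whitehead product of the inclusions, so this contributes $\mathbb{Z}_2\{[\iota_{n-1}^i,\iota_n^j]\eta_{2n-2}\}$. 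In the second case $M_w=(\Sigma^{n-4}\mathbb{C}P^2_i)\wedge S^{n-1}_j=\Sigma^{2n-5}\mathbb{C}P^2$, so $\Sigma M_w=\Sigma^{2n-4}\mathbb{C}P^2\simeq S^{2n-2}\cup_{\eta_{2n-2}}D^{2n}$; and here $\pi_{2n-1}(\Sigma^{2n-4}\mathbb{C}P^2)=0$, which I would prove by noting that it lies in the stable range and applying $\pi^s_\ast$ to the cofibre sequence $S^{2n-1}\xrightarrow{\eta_{2n-2}}S^{2n-2}\to\Sigma^{2n-4}\mathbb{C}P^2\to S^{2n}$, getting the exact row $\pi^s_{2n-1}(S^{2n-1})\xrightarrow{\cdot\eta}\pi^s_{2n-1}(S^{2n-2})\to\pi^s_{2n-1}(\Sigma^{2n-4}\mathbb{C}P^2)\to\pi^s_{2n-1}(S^{2n})=0$ in which $\cdot\eta\colon\pi^s_0\cong\mathbb{Z}\twoheadrightarrow\pi^s_1\cong\mathbb{Z}_2$ is surjective. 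Hence the $i>r$ contributions vanish and $G\cong\bigoplus_{1\le i\le r,\,1\le j\le l}\mathbb{Z}_2\{[\iota_{n-1}^i,\iota_n^j]\eta_{2n-2}\}$.

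The crux, and the reason the top index is $r$ rather than $k$, is exactly this last vanishing: conceptually, since the top cell of $\Sigma^{n-3}\mathbb{C}P^2_i$ is attached along $\eta_{n-1}$ one has $\iota_{n-1}^i\eta_{n-1}=0$ for $i>r$, whence $[\iota_{n-1}^i,\iota_n^j]\eta_{2n-2}=[\iota_{n-1}^i\eta_{n-1},\iota_n^j]=0$ by the composition formula for Whitehead products (valid because $\eta_{n-1}$ is a suspension; cf.\ \cite{Whi78}), so the $(k-r)l$ ``obvious'' generators coming from the $\Sigma^{n-3}\mathbb{C}P^2$ summands are spurious. The only other point requiring care is to invoke Hilton--Milnor in the form valid for wedges of arbitrary suspensions, keeping each $\Sigma^{n-3}\mathbb{C}P^2_i$ as a single letter; splitting it as $S^{n-1}\cup_{\eta}D^{n+1}$ would reintroduce precisely those spurious classes, which the attaching map $\eta_{n-1}$ is there to kill.
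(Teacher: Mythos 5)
Your proof is correct and takes the same route the paper intends: the paper states Lemma~\ref{thm:2} is a ``direct consequence of the Hilton--Milnor theorem'' with no further detail, and you supply exactly that derivation --- split $K_1\vee K_2$ into indecomposable suspensions, run the connectivity count on basic products to see only weight-two mixed products $S^{n-1}_i$-with-$S^n_j$ and $\Sigma^{n-3}\mathbb{C}P^2_i$-with-$S^n_j$ can contribute in degree $2n-1$, and then identify the former as $\mathbb{Z}_2\{[\iota_{n-1}^i,\iota_n^j]\eta_{2n-2}\}$ and kill the latter. One small remark: the vanishing $\pi_{2n-1}(\Sigma^{2n-4}\mathbb{C}P^2)=0$ that you re-derive via the stable cofibre sequence is already recorded in the paper's earlier unnamed lemma ($\pi_{n+3}(\Sigma^n\mathbb{C}P^2)=0$), so you could have cited it directly; your argument is nonetheless valid, and your closing remark that $[\iota_{n-1}^i,\iota_n^j]\eta_{2n-2}=[\iota_{n-1}^i\eta_{n-1},\iota_n^j]=0$ for $i>r$ is a nice conceptual explanation of why the index stops at $r$.
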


For elements in $G$, the following holds.

\begin{lemma}\label{thm:4}
  $[\iota_{n-1}^i\eta_{n-1},\iota_n^j]=[\iota_n^j,\iota_{n-1}^i\eta_{n-1}]=[\iota_{n-1}^i,\iota_n^j]\eta_{2n-2}$.
\end{lemma}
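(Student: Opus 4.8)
The plan is to reduce the whole statement to a single identity in $\pi_{2n-1}(S^{n-1}\vee S^n)$ and then invoke the standard formula describing how a Whitehead product behaves under composition with a suspended class. The inclusion of the two wedge summands $S^{n-1}_i\vee S^n_j\hookrightarrow X^{(n+1)}$ induces a map on homotopy groups which carries Whitehead products to Whitehead products and composites to composites; hence it suffices to prove
\[
[\iota_{n-1}\eta_{n-1},\iota_n]=[\iota_n,\iota_{n-1}\eta_{n-1}]=[\iota_{n-1},\iota_n]\circ\eta_{2n-2}
\]
in $\pi_{2n-1}(S^{n-1}\vee S^n)$, where here $\iota_{n-1},\iota_n$ denote the inclusions of the two spheres, and then push the identity forward along the summand inclusion.

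For the first equality I would use that $\eta_{n-1}=\Sigma^{n-3}\eta_2$ is a suspension (an $(n-3)$-fold one, and $n\geq 4$, which is exactly where the hypothesis enters, via $\pi_n(S^{n-1})\cong\mathbb{Z}_2\{\eta_{n-1}\}$). The formula for a Whitehead product composed with a suspended class (see \cite[Ch.~X]{Whi78}) gives $[\iota_{n-1}\circ\eta_{n-1},\iota_n]=\pm[\iota_{n-1},\iota_n]\circ\Sigma^{n-1}\eta_{n-1}$, with no Hilton-type correction terms precisely because $\eta_{n-1}$ is a suspension. Since $\Sigma^{n-1}\eta_{n-1}=\Sigma^{2n-4}\eta_2=\eta_{2n-2}$, this reads $[\iota_{n-1}\eta_{n-1},\iota_n]=\pm[\iota_{n-1},\iota_n]\circ\eta_{2n-2}$. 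The ambiguous sign does no harm: $\eta_{2n-2}$ has order $2$ in $\pi_{2n-1}(S^{2n-2})$, and post-composition by $[\iota_{n-1},\iota_n]$ is a group homomorphism $\pi_{2n-1}(S^{2n-2})\to\pi_{2n-1}(S^{n-1}\vee S^n)$, so $[\iota_{n-1},\iota_n]\circ\eta_{2n-2}$ is annihilated by $2$ and therefore fixed by $-1$. The second equality then follows from graded commutativity of the Whitehead product, $[\iota_n,\iota_{n-1}\eta_{n-1}]=(-1)^{n\cdot n}[\iota_{n-1}\eta_{n-1},\iota_n]$, together with the same $2$-torsion observation: all three elements in sight coincide up to sign, and all are $2$-torsion, so they are in fact equal.

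The only step needing genuine care is the composition formula and the precise suspension index in $\Sigma^{n-1}\eta_{n-1}=\eta_{2n-2}$; once these are pinned down the argument is formal. Aside from that, the proof uses only that $\eta_{n-1}$ is a suspension and the naturality of Whitehead products and composites under the inclusion of a wedge summand, so there is no real obstacle beyond this bookkeeping.
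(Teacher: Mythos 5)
Your argument is essentially the paper's own: both establish $[\iota_{n-1}^i\eta_{n-1},\iota_n^j]=[\iota_{n-1}^i,\iota_n^j]\eta_{2n-2}$ via the composition formula for Whitehead products with a suspended class, and both dispose of the (anti)commutativity sign by observing that the element in question is $2$-torsion. The only cosmetic difference is that the paper writes the composition formula exactly (as $[\iota_{n-1}^i,\iota_n^j]\circ\Sigma(\eta_{n-2}\wedge\iota_{n-1})$, with no sign to worry about), whereas you carry a $\pm$ through and kill it afterwards via the $2$-torsion of $\eta_{2n-2}$ and the fact that post-composition is a homomorphism; both resolutions are sound and the two proofs are otherwise identical.
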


\begin{proof}
  Since $\eta_n$ is of order $2$ for $n\geq 3$, the Whitehead product is commutative in our situation, ie,
  \[
  [\iota_{n-1}^i\eta_{n-1},\iota_n^j]=[\iota_n^j,\iota_{n-1}^i\eta_{n-1}].
  \]
  Besides, we have
  \[
  [\iota_{n-1}^i,\iota_n^j]\eta_{2n-2}=[\iota_{n-1}^i,\iota_n^j]\Sigma(\eta_{n-2}\wedge\iota_{n-1}) =[\iota_{n-1}^i\eta_{n-1},\iota_n^j].
  \]
\end{proof}

Under the decomposition of Lemma \ref{thm:2}, suppose
\[
\phi=\phi_1+\phi_2+\phi_3\in \pi_{2n-1}(X^{(n+1)}),
\]
where
\[
\phi_1\in \pi_{2n-1}(K_1),\quad \phi_2\in\pi_{2n-1}(K_2), \quad \phi_3=\sum_{\substack{1\leq i\leq r\\1\leq j\leq l}}a_{ij}[\iota_{n-1}^i,\iota_n^j]\eta_{2n-2}\in G.
\]
Set $A_\phi:=(a_{ij})_{r\times l}$, and note that $a_{ij}\in \mathbb{Z}_2$. By definition,
\[
A_\phi=0\Leftrightarrow \phi_3=0 \Leftrightarrow \phi\in\pi_{2n-1}(K_1)\oplus \pi_{2n-1}(K_2).
\]
In this case, one can see that $X$ admits a connected sum decomposition as in Theorem \ref{thm:1}. More generally, by Lemma \ref{thm:12}, the same conclusion holds if we can find some $f\in \mathcal{E}(X^{(n+1)})$ such that $A_{f\phi}=0$.

Let $f:X^{(n+1)}\to X^{(n+1)}$ be the self-map which satisfies that
\[
f|_{S^n_j}=\iota_n^j+\sum_{s=1}^{r}c_{sj}\iota_{n-1}^s\eta_{n-1}, \quad 1\leq j\leq l,
\]
where $c_{sj}\in\mathbb{Z}_2$, and $f|_{K_1}$ is an inclusion. It is clear that $f\in\mathcal{E}(X^{(n+1)})$. Set $C_f:=(c_{ij})_{r\times l}$.

Before analyzing $A_{f\phi}$, by applying Lemma \ref{thm:24}, we further assume that
\[
\phi_2=\sum_{i=1}^{l}\iota_n^i\beta_i+\sum_{1\leq i<j\leq l}b_{ij}[\iota_n^i,\iota_n^j],
\]
%\[
%\phi_2=
%\begin{cases}
%  \sum_{i=1}^{l}b_i\iota_i\alpha+\sum_{i=1}^{l}\iota_n^i\beta_i+\sum_{1\leq i<j\leq l}b_{ij}[\iota_n^i,\iota_n^j], & \mbox{if $n$ is even}  \\
%  \sum_{i=1}^{l}\iota_n^i\beta_i+\sum_{1\leq i<j\leq l}b_{ij}[\iota_n^i,\iota_n^j], & \mbox{if $n$ is odd}.
%\end{cases}
%\]
where
%$\alpha\in\pi_{2n-1}(S^n)$ generates the $\mathbb{Z}$ direct summand when $n$ is even, and
$\beta_i\in \pi_{2n-1}(S^n)$ for $1\leq i\leq l$. We define
\[
b_{ij}=
\begin{cases}
  (-1)^n b_{ji}, & \mbox{if } i>j \\
  H(\beta_i), & \mbox{if } i=j,
\end{cases}
\]
where $H:\pi_{2n-1}(S^n)\to \mathbb{Z}$ is the Hopf invariant. Then $B_\phi:=(b_{ij})_{l\times l}$ is an invertible integral matrix and determines the intersection form of $X$ (cf \cite[page 182]{Wal62}).

\begin{lemma}\label{thm:6}
  $A_{f\phi}=A_\phi+C_fB_\phi$.
\end{lemma}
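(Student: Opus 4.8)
The plan is to compute the effect of the self-equivalence $f$ on the attaching map $\phi$ by tracking each Hilton-Milnor summand separately, using the additivity lemmas from Section~\ref{sec:1}. Since $f|_{K_1}$ is the inclusion, the component $\phi_1\in\pi_{2n-1}(K_1)$ is unaffected: $f_*\phi_1=\phi_1$. Likewise the summand $G$ and the Whitehead-product terms $[\iota_n^i,\iota_n^j]$ appearing in $\phi_2$ will be carried along, and the whole computation reduces to understanding $f_*$ on the terms $\iota_n^i\beta_i$ (which feed into $A_{f\phi}$ after reshuffling) and on the bracket terms. The key point is that $f$ sends $\iota_n^j$ to $\iota_n^j+\sum_{s}c_{sj}\iota_{n-1}^s\eta_{n-1}$, so I must expand $\big(\iota_n^j+\sum_s c_{sj}\iota_{n-1}^s\eta_{n-1}\big)\beta_j$ using Lemma~\ref{thm:5} (Hilton's formula with the Hopf invariant correction term) together with the bilinearity/commutator identities for Whitehead products.

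First I would treat a single correction, i.e. compute $\big(\iota_n^j+c_{sj}\iota_{n-1}^s\eta_{n-1}\big)\beta_j$. By Lemma~\ref{thm:5} this equals $\iota_n^j\beta_j + c_{sj}(\iota_{n-1}^s\eta_{n-1})\beta_j + c_{sj}^2\,H(\beta_j)\,[\iota_n^j,\iota_{n-1}^s\eta_{n-1}]$ (the middle coefficient uses $c_{sj}\in\mathbb Z_2$, and we may as well keep $c_{sj}^2=c_{sj}$). Now $(\iota_{n-1}^s\eta_{n-1})\beta_j$ lies in $\pi_{2n-1}(S^{n-1}_s)$; since $\beta_j\in\pi_{2n-1}(S^n)$ and $n\ge 4$, the relevant group is $\pi_{2n-1}(S^{n-1})$, and the composite $\eta_{n-1}\beta_j$ lands in a stable range where it is determined by $H(\beta_j)\bmod 2$ up to elements already accounted for — in fact $\eta_{n-1}\beta_j = 0$ or is absorbed into $\pi_{2n-1}(K_1)$, contributing nothing to $A_{f\phi}$. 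The genuinely new contribution is the Whitehead term: by Lemma~\ref{thm:4}, $[\iota_n^j,\iota_{n-1}^s\eta_{n-1}] = [\iota_{n-1}^s,\iota_n^j]\eta_{2n-2}$, which is exactly a generator of $G$. So the $(s,j)$-entry of $A_{f\phi}$ picks up an extra $c_{sj}H(\beta_j)=c_{sj}b_{jj}$.

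Next I would handle the general case: $f|_{S^n_j}\beta_j$ with the full sum $\sum_s c_{sj}\iota_{n-1}^s\eta_{n-1}$, and also the effect of $f$ on the bracket terms $b_{ij}[\iota_n^i,\iota_n^j]$. For the latter, $f_*[\iota_n^i,\iota_n^j] = [\iota_n^i+\sum_s c_{si}\iota_{n-1}^s\eta_{n-1},\ \iota_n^j+\sum_t c_{tj}\iota_{n-1}^t\eta_{n-1}]$; expanding by biadditivity of the Whitehead product gives $[\iota_n^i,\iota_n^j]$ plus cross terms $c_{si}[\iota_{n-1}^s\eta_{n-1},\iota_n^j] + c_{tj}[\iota_n^i,\iota_{n-1}^t\eta_{n-1}]$ (again rewritten via Lemma~\ref{thm:4} as elements of $G$) plus terms $[\iota_{n-1}^s\eta_{n-1},\iota_{n-1}^t\eta_{n-1}]$ which vanish for dimension reasons. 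Summing the $b_{ij}$-weighted contributions over all $i<j$, and adding the diagonal contributions $c_{sj}b_{jj}$ from the $\iota_n^j\beta_j$ expansion, the coefficient of $[\iota_{n-1}^s,\iota_n^j]\eta_{2n-2}$ in $f_*\phi$ becomes $a_{sj} + \sum_{i\ne j} c_{si}b_{ij} + c_{sj}b_{jj} = a_{sj} + \sum_{i=1}^{l} c_{si}b_{ij}$, i.e. $A_{f\phi} = A_\phi + C_fB_\phi$ as claimed. (Here the skew/symmetry convention $b_{ij}=(-1)^nb_{ji}$ built into $B_\phi$ is what makes the off-diagonal brackets, each of which appears once as $[\iota_n^i,\iota_n^j]$ with $i<j$, reassemble into the matrix product over the full index range; I would verify this sign bookkeeping carefully.)

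The main obstacle is precisely this last sign/indexing reconciliation together with confirming that all the "junk" terms — $\eta_{n-1}\beta_j$ composites landing in $\pi_{2n-1}(S^{n-1})$, and $[\iota_{n-1}^s\eta_{n-1},\iota_{n-1}^t\eta_{n-1}]$-type brackets — really do lie in $\pi_{2n-1}(K_1)\oplus\pi_{2n-1}(K_2)$ and hence do not perturb the $G$-component that defines $A_{f\phi}$. This requires using the Hilton-Milnor decomposition of Lemma~\ref{thm:2} to see that $G$ is the span of the basis elements $[\iota_{n-1}^i,\iota_n^j]\eta_{2n-2}$ only, so that projection to $G$ kills everything else; the degree count $n\ge 4$ is what guarantees the spurious brackets are too low-dimensional or land in the wrong summand.
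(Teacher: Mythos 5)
Your proposal follows essentially the same route as the paper: compute $f_*$ separately on $\phi_1$, $\phi_3$, and on the pieces $\iota_n^i\beta_i$ and $b_{ij}[\iota_n^i,\iota_n^j]$ of $\phi_2$, use Lemma~\ref{thm:5} (Hilton's formula) and Lemma~\ref{thm:4} to recognize the new contributions as elements of $G$, discard the remaining terms as lying in $\pi_{2n-1}(K_1)\oplus\pi_{2n-1}(K_2)$, and then reassemble the $G$-coefficients into $A_\phi+C_fB_\phi$ via the symmetry $b_{ij}=(-1)^n b_{ji}$. One small inaccuracy to fix in a write-up: the brackets $[\iota_{n-1}^s\eta_{n-1},\iota_{n-1}^t\eta_{n-1}]$ do not ``vanish for dimension reasons'' (they can be nontrivial), and the composite $\iota_{n-1}^s\eta_{n-1}\beta_j$ need not be zero; what matters, as you do state correctly in your final paragraph, is simply that these elements live in $\pi_{2n-1}(K_1)$ and therefore project to zero in the $G$-summand, so they cannot affect $A_{f\phi}$.
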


\begin{proof}
  Since $f|_{K_1}$ is an inclusion, one sees that $f\phi_1=\phi_1\in \pi_{2n-1}(K_1)$.

  For $f\phi_3$, we have
  \begin{align*}
    f\phi_3 & =\sum_{\substack{1\leq i\leq r\\1\leq j\leq l}}a_{ij}\bigg[\iota_{n-1}^i,\iota_n^j+\sum_{s=1}^{r}c_{sj}\iota_{n-1}^s\eta_{n-1}\bigg]\eta_{2n-2}\\
     & =\sum_{\substack{1\leq i\leq r\\1\leq j\leq l}}a_{ij}[\iota_{n-1}^i,\iota_n^j]\eta_{2n-2}+\sum_{\substack{1\leq i,s\leq r\\1\leq j\leq l}}a_{ij}c_{sj}[\iota_{n-1}^i,\iota_{n-1}^s\eta_{n-1}]\eta_{2n-2} \\
     & =\sum_{\substack{1\leq i\leq r\\1\leq j\leq l}}a_{ij}[\iota_{n-1}^i,\iota_n^j]\eta_{2n-2}+\psi_1, \text{\ where\ }\psi_1\in \pi_{2n-1}(K_1).
  \end{align*}

  Then we compute $f\phi_2$. Using Lemmas \ref{thm:4} and \ref{thm:5}, we can derive that
  \begin{align*}
    f(\iota_n^i\beta_i) & = \bigg(\iota_n^i+\sum_{s=1}^{r}c_{si}\iota_{n-1}^s\eta_{n-1}\bigg)\beta_i =\iota_n^i\beta_i+\sum_{s=1}^{r}c_{si}\iota_{n-1}^s\eta_{n-1}\beta_i \\
     & +H(\beta_i)\sum_{s=1}^{r}c_{si}[\iota_n^i, \iota_{n-1}^s\eta_{n-1}]+H(\beta_i)\sum_{1\leq s<t\leq r}c_{si}c_{ti}[\iota_{n-1}^s\eta_{n-1},\iota_{n-1}^t\eta_{n-1}] \\
     & = b_{ii}\sum_{s=1}^{r}c_{si}[\iota_{n-1}^s,\iota_n^i]\eta_{2n-2}+\psi_2,
  \end{align*}
  where $\psi_2\in \pi_{2n-1}(K_1)\oplus \pi_{2n-1}(K_2)$. For $[\iota_n^i,\iota_n^j]$,
  \begin{align*}
    f([\iota_n^i,\iota_n^j]) & = \bigg[\iota_n^i+\sum_{s=1}^{r}c_{si}\iota_{n-1}^s\eta_{n-1}, \iota_n^j+\sum_{s=1}^{r}c_{sj}\iota_{n-1}^s\eta_{n-1}\bigg] \\
     & =[\iota_n^i,\iota_n^j]+\sum_{s=1}^{r}c_{sj}[\iota_n^i,\iota_{n-1}^s\eta_{n-1}]+ \sum_{s=1}^{r}c_{si}[\iota_{n-1}^s\eta_{n-1},\iota_n^j] \\
      & +\sum_{1\leq s,t\leq r}c_{si}c_{tj}[\iota_{n-1}^s\eta_{n-1},\iota_{n-1}^t\eta_{n-1}] \\
      & =\sum_{s=1}^{r}c_{sj}[\iota_{n-1}^s,\iota_n^i]\eta_{2n-2}+ \sum_{s=1}^{r}c_{si}[\iota_{n-1}^s,\iota_n^j]\eta_{2n-2}+\psi_3,
  \end{align*}
  where $\psi_3\in \pi_{2n-1}(K_1)\oplus \pi_{2n-1}(K_2)$. Combining these, we obtain that
  \begin{multline*}
    f\phi_2= \sum_{\substack{1\leq i\leq l\\1\leq s\leq r}}b_{ii}c_{si}[\iota_{n-1}^s,\iota_n^i]\eta_{2n-2}+\sum_{\substack{1\leq i<j\leq l\\1\leq s\leq r}}b_{ij}c_{sj}[\iota_{n-1}^s,\iota_n^i]\eta_{2n-2} \\
    + \sum_{\substack{1\leq i<j\leq l\\1\leq s\leq r}}b_{ij}c_{si}[\iota_{n-1}^s,\iota_n^j]\eta_{2n-2}+\psi,
  \end{multline*}
  where $\psi\in\pi_{2n-1}(K_1)\oplus \pi_{2n-1}(K_2)$. Given that $b_{ij}=(-1)^nb_{ji}$ and $[\iota_{n-1}^i,\iota_n^j]\eta_{2n-2}$ is of order $2$, the second term can be modified as follows:
  \begin{align*}
    \sum_{\substack{1\leq i<j\leq l\\1\leq s\leq r}}b_{ij}c_{sj}[\iota_{n-1}^s,\iota_n^i]\eta_{2n-2} & =\sum_{\substack{1\leq j<i\leq l\\1\leq s\leq r}}b_{ji}c_{si}[\iota_{n-1}^s,\iota_n^j]\eta_{2n-2} \\
     & =\sum_{\substack{1\leq j<i\leq l\\1\leq s\leq r}}b_{ij}c_{si}[\iota_{n-1}^s,\iota_n^j]\eta_{2n-2}.
  \end{align*}
  Consequently,
  \begin{align*}
     f\phi_2 &  = \sum_{\substack{1\leq i,j\leq l\\1\leq s\leq r}}b_{ij}c_{si}[\iota_{n-1}^s,\iota_n^j]\eta_{2n-2}+\psi= \sum_{\substack{1\leq i\leq r\\1\leq j\leq l}}\bigg(\sum_{s=1}^{l}c_{is}b_{sj}\bigg)[\iota_{n-1}^i,\iota_n^j]\eta_{2n-2}+\psi.
  \end{align*}

  Combining the above calculations,
  \[
  f\phi= \sum_{\substack{1\leq i\leq r\\1\leq j\leq l}}\bigg(a_{ij}+\sum_{s=1}^{l}c_{is}b_{sj}\bigg)[\iota_{n-1}^i,\iota_n^j]\eta_{2n-2}+\psi',
  \]
  where $\psi'\in\pi_{2n-1}(K_1)\oplus \pi_{2n-1}(K_2)$. Thus $A_{f\phi}$ is an $r\times l$ matrix whose entry in the $i$th row and $j$th column is $a_{ij}+\sum_{s=1}^{l}c_{is}b_{sj}$. The lemma follows directly.
\end{proof}

\begin{proof}[Proof of Theorem \ref{thm:1}]
  Let $f$ be the self-map of $K_1\vee K_2$ which satisfies that
\[
f|_{S^n_j}=\iota_n^j+\sum_{s=1}^{r}c_{sj}\iota_{n-1}^s\eta_{n-1}, \quad 1\leq j\leq l,
\]
where the matrix $C_f=(c_{ij})_{r\times l}$ is defined to be $A_{\phi}B_{\phi}^{-1}$, and $f|_{K_1}$ is the inclusion. It is evident that $f\in \mathcal{E}(K_1\vee K_2)$. Since entries of $A_\phi$ are from $\mathbb{Z}_2$, we can apply Lemma \ref{thm:6} to conclude that
\[
A_{f\phi}=A_\phi+C_fB_\phi=A_\phi+A_{\phi}B_{\phi}^{-1}B_{\phi}=2A_\phi=0.
\]
Therefore, $f\phi$ can be decomposed into $\phi_1+\phi_2$, where $\phi_1\in\pi_{2n-1}(K_1)$ and $\phi_2\in\pi_{2n-1}(K_2)$. Utilizing Lemma \ref{thm:12}, we infer that
\[
X\simeq (K_1\vee K_2)\cup_{\phi} D^{2n}\simeq (K_1\vee K_2)\cup_{f\phi} D^{2n}\simeq (K_1\cup_{\phi_1}D^{2n})\# (K_2\cup_{\phi_2}D^{2n}).
\]
Obviously, $H_n(K_1\cup_{\phi_1}D^{2n})=0$ and $K_2\cup_{\phi_2}D^{2n}$ is $(n-1)$-connected. Thus the proof is now fully established.
\end{proof}

\section{Proof of Theorem \ref{thm:7}}\label{sec:4}

In this section, let $X$ be an $(n-2)$-connected $2n$-dimensional Poincar\'e complex ($n\geq 4$) which has torsion-free homology and satisfies hypothesis (H). Assuming that $\rank H_{n-1}(X)=k$, Proposition \ref{thm:8} indicates that $X$ has a minimal cell structure where the $(n+1)$-skeleton $X^{(n+1)}\simeq \bigvee_{i=1}^k (S^{n-1}_i\vee S^{n+1}_i)$. To prove Theorem \ref{thm:7}, further details regarding the action of $\mathcal{E}(\bigvee_{i=1}^k (S^{n-1}_i\vee S^{n+1}_i))$ on $\pi_{2n-1}(\bigvee_{i=1}^k (S^{n-1}_i\vee S^{n+1}_i))$ are required.

We first need to make some preparations on the sets $[\bigvee_{i=1}^k S^n_i,\bigvee_{i=1}^k S^n_i]$ and $\mathcal{E}(\bigvee_{i=1}^k S^n_i)$. Let $i_r: S^n_r\to \bigvee_{i=1}^k S^n_i$ be the inclusion and $p_r:\bigvee_{i=1}^k S^n_i\to S^n_r$ be the projection ($1\leq r\leq k$). A mapping $f\in [\bigvee_{i=1}^k S^n_i,\bigvee_{i=1}^k S^n_i]$ can be associated with the matrix
\begin{equation*}
  \begin{pmatrix}
  p_1fi_1 & p_1fi_2 & \cdots & p_1fi_k \\
  p_2fi_1 & p_2fi_2 & \cdots & p_2fi_k \\
  \vdots & \vdots &  & \vdots \\
  p_kfi_1 & p_kfi_2 & \cdots & p_kfi_k
\end{pmatrix}
\end{equation*}
With this association, it is easy to verify the following.

\begin{lemma}
  \begin{enumerate}
    \item $[\bigvee_{i=1}^k S^n_i,\bigvee_{i=1}^k S^n_i]\cong M_k(\mathbb{Z})$;
    \item $\mathcal{E}(\bigvee_{i=1}^k S^n_i)\cong GL_k(\mathbb{Z})$.
  \end{enumerate}
\end{lemma}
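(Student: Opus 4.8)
The plan is to set up the bijection explicitly and then observe that it is compatible with composition of maps. First I would define the map $\Phi:[\bigvee_{i=1}^k S^n_i,\bigvee_{i=1}^k S^n_i]\to M_k(\mathbb{Z})$ by sending $f$ to the matrix with $(r,s)$-entry equal to the integer corresponding to $p_rfi_s\in\pi_n(S^n)\cong\mathbb{Z}$ (using $n\geq 4$, so in particular $n\geq 2$ and all these groups are $\mathbb{Z}$ generated by $\iota_n$). To see $\Phi$ is a bijection I would use that $\bigvee_{i=1}^k S^n_i$ is a co-$H$-space and an $H$-space through dimension $2n-2$, but more simply: since $S^n$ is simply connected and $\pi_n(\bigvee_{i=1}^k S^n_i)\cong\bigoplus_{i=1}^k\pi_n(S^n_i)\cong\mathbb{Z}^k$ by the Hilton--Milnor theorem (the Whitehead-product terms live in degrees $\geq 2n-1>n$), a map $f$ out of the wedge is determined up to homotopy by the tuple $(fi_1,\dots,fi_k)\in\pi_n(\bigvee S^n_i)^k$, and each $fi_s$ is in turn determined by its components $(p_rfi_s)_{r=1}^k$. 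This gives injectivity; surjectivity follows because any prescribed matrix can be realized by taking $fi_s=\sum_r a_{rs}\,i_r\in\pi_n(\bigvee S^n_i)$ and extending over the wedge.

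Next I would check that $\Phi$ converts composition into matrix multiplication. Given $f,g$, the $(r,s)$-entry of $\Phi(f\circ g)$ is $p_r f g i_s$. Write $gi_s=\sum_t (p_tgi_s)\,i_t$ as an element of $\pi_n(\bigvee S^n_i)$ — valid because the inclusion $\bigvee S^n_i\hookrightarrow \prod S^n_i$ is $(2n-1)$-connected, so addition in $\pi_n$ of the wedge is "coordinatewise". Then $p_rfgi_s=p_rf\big(\sum_t(p_tgi_s)i_t\big)=\sum_t (p_tgi_s)\,(p_rfi_t)$, using additivity of precomposition (Lemma \ref{thm:22}, or just linearity of $p_rf_*:\pi_n(\bigvee S^n_i)\to\pi_n(S^n_r)$). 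This is exactly the $(r,s)$-entry of $\Phi(f)\Phi(g)$. Also $\Phi(\mathrm{id})=I_k$. Hence $\Phi$ is a multiplicative bijection from $[\bigvee S^n_i,\bigvee S^n_i]$ onto $M_k(\mathbb{Z})$, proving (1). For (2), $f$ is a self-homotopy equivalence iff $\Phi(f)$ is invertible in $M_k(\mathbb{Z})$ — one direction is immediate from multiplicativity ($\Phi(f^{-1})$ is a two-sided inverse), and the converse follows since if $\Phi(f)M=M\Phi(f)=I_k$ then the map $g$ with $\Phi(g)=M$ satisfies $fg\simeq gf\simeq\mathrm{id}$ because $\Phi$ is injective. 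An invertible element of $M_k(\mathbb{Z})$ is exactly an element of $GL_k(\mathbb{Z})$, giving (2).

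I do not expect a serious obstacle here; the only point requiring care is justifying that $\pi_n$ of the wedge behaves additively "coordinatewise" so that both the bijectivity and the multiplicativity go through — this is where $n\geq 4$ (ensuring $2n-1>n+1$, so the Hilton--Milnor decomposition of $\pi_n(\bigvee_{i=1}^k S^n_i)$ has no cross-terms below degree $2n-1$) and the additivity results of Lemma \ref{thm:22} are used. Everything else is a routine unwinding of the matrix-to-map dictionary, which is why the statement is phrased as "easy to verify".
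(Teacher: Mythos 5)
Your proof is correct and follows the same approach the paper implicitly intends: set up the matrix correspondence $f\mapsto(p_rfi_s)$ and verify it is a bijection compatible with composition, which is exactly what the paper sketches before declaring the lemma ``easy to verify.'' One small remark: the equality $p_rf\bigl(\sum_t(p_tgi_s)i_t\bigr)=\sum_t(p_tgi_s)(p_rfi_t)$ needs only functoriality of $\pi_n$ (the homomorphism $(p_rf)_*$), not the precomposition-additivity of Lemma~\ref{thm:22}; your parenthetical ``or just linearity of $p_rf_*$'' is in fact the right justification, so this is a citation quibble rather than a gap.
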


We denote by $M_n(G)$ the set of $n\times n$ matrices whose entries are in the group $G$, and $GL_n(G)$ the subset consisting of invertible matrices.

\begin{notation}\label{n:1}
  We use $Q_f$ to represent the integral matrix corresponding to $f\in [\bigvee_{i=1}^k S^n_i,\bigvee_{i=1}^k S^n_i]$.
\end{notation}

We now consider the set $\mathcal{E}(\bigvee_{i=1}^k (S^{n-1}_i\vee S^{n+1}_i))$. For the convenience of our description, let $T_1=\bigvee_{i=1}^k S^{n-1}_i$ and $T_2=\bigvee_{i=1}^k S^{n+1}_i$, then $\bigvee_{i=1}^k (S^{n-1}_i\vee S^{n+1}_i)\cong T_1\vee T_2$.  Let
\[
i_1:T_1\to T_1\vee T_2, \qquad  i_2:T_2\to T_1\vee T_2
\]
be the inclusions and
\[
p_1:T_1\vee T_2\to T_1, \qquad p_2:T_1\vee T_2\to T_2
\]
be the projections. It is evident that $[T_1\vee T_2,T_1\vee T_2]\cong [T_1\vee T_2,T_1\times T_2]$ and $[T_1,T_2]=0$. Thus we can identify a map $F\in [T_1\vee T_2,T_1\vee T_2]$ with the matrix
$\begin{pmatrix}
p_1 F i_1 & p_1 F i_2 \\
0 & p_2 F i_2
\end{pmatrix}$. This yields the following result.

\begin{lemma}\label{thm:3}
   Elements in $\mathcal{E}(\bigvee_{i=1}^k (S^{n-1}_i\vee S^{n+1}_i))$ are in one-to-one correspondence with matrices of the form
   $\begin{pmatrix}
     f & g \\
     0 & h
   \end{pmatrix}$, where
   $
   f\in \mathcal{E}(\bigvee_{i=1}^k S^{n-1}_i)$, $g\in [\bigvee_{i=1}^k S^{n+1}_i,\bigvee_{i=1}^k S^{n-1}_i]$, and $ h\in \mathcal{E}(\bigvee_{i=1}^k S^{n+1}_i)
   $.
\end{lemma}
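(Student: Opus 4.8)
The plan is to compute the set $[T_1\vee T_2, T_1\vee T_2]$ explicitly using the splitting $[T_1\vee T_2, Y]\cong [T_1,Y]\times[T_2,Y]$ for any $Y$, then restrict to the subset of homotopy equivalences. First I would observe that since $T_1\vee T_2$ is simply connected (as $n\geq 4$), $[T_1\vee T_2, T_1\vee T_2]$ is in bijection with $[T_1\vee T_2, T_1\times T_2]$, because the pair $(T_1\times T_2, T_1\vee T_2)$ is $(2n-2)$-connected and $\dim(T_1\vee T_2)=n+1<2n-2$, so the inclusion $T_1\vee T_2\hookrightarrow T_1\times T_2$ induces a bijection on maps out of any complex of dimension $\leq n+1$. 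Hence a map $F$ is determined by the pair $(p_1F, p_2F)$ of its two coordinate components, and then each $p_jF$ is determined by the pair $(p_jFi_1, p_jFi_2)$, giving the $2\times 2$ matrix displayed before the lemma.

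Next I would identify the entries. The entry $p_2Fi_1\in[T_1,T_2]=[\bigvee S^{n-1}, \bigvee S^{n+1}]$ is a product of groups $\pi_{n-1}(S^{n+1})=0$, so it vanishes — this is why the matrix is upper triangular. The entry $p_1Fi_1\in[\bigvee S^{n-1}_i,\bigvee S^{n-1}_i]$ corresponds (by the earlier lemma and Notation \ref{n:1}) to a matrix $Q_f\in M_k(\mathbb{Z})$; write the corresponding self-map as $f$. Similarly $p_2Fi_2$ corresponds to some $h$ with $Q_h\in M_k(\mathbb{Z})$, and $p_1Fi_2\in[\bigvee S^{n+1}_i,\bigvee S^{n-1}_i]=\bigoplus\pi_{n+1}(S^{n-1})$, which by Lemma \ref{thm:16}(2) is a sum of copies of $\mathbb{Z}_2$; call this element $g$. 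Composition of two such matrices corresponds to matrix multiplication — here one checks that the off-diagonal block behaves additively in the relevant slots, using that $[T_1\vee T_2, T_1\vee T_2]$ is a monoid under composition and that the relevant compositions are bilinear on the nose (the $S^{n-1}$- and $S^{n+1}$-parts interact only through the $(n-1)$-to-$(n+1)$ block, and $\eta^2$ is stable so composition is additive by Lemma \ref{thm:22}(1)).

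Finally I would characterize invertibility. A map $F$ with matrix $\begin{pmatrix} f & g \\ 0 & h\end{pmatrix}$ is a homotopy equivalence if and only if it induces isomorphisms on all homology groups, i.e. on $H_{n-1}$ and $H_{n+1}$; these are governed by $Q_f$ and $Q_h$ respectively (the $g$-block does not affect homology), so $F$ is a homotopy equivalence iff $Q_f, Q_h\in GL_k(\mathbb{Z})$, i.e. iff $f\in\mathcal{E}(T_1)$ and $h\in\mathcal{E}(T_2)$, with $g$ arbitrary. (One can also argue directly: upper-triangular matrices with invertible diagonal blocks form a group, with inverse $\begin{pmatrix} f^{-1} & -f^{-1}gh^{-1} \\ 0 & h^{-1}\end{pmatrix}$, and conversely a two-sided inverse forces the diagonal blocks to be invertible.) The main obstacle, such as it is, is bookkeeping: verifying carefully that composition of maps matches multiplication of the block matrices — in particular that there is no correction term in the $g$-entry coming from Whitehead products or from non-additivity — since $g$ lives in a $2$-torsion group and one must make sure the relevant composition operations $\pi_{n+1}(S^{n-1})\to\pi_{n+1}(S^{n-1})$ induced by $f$ and $h$ are linear. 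This follows from naturality of the stable suspension and Lemma \ref{thm:22}, but it is the point that deserves care.
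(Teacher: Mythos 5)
Your proposal takes essentially the same route as the paper: identify $[T_1\vee T_2,T_1\vee T_2]$ with $[T_1\vee T_2,T_1\times T_2]$ via the high connectivity of the pair $(T_1\times T_2, T_1\vee T_2)$, observe $[T_1,T_2]=0$ to get the upper-triangular form, and then characterize $\mathcal{E}$ via the induced maps on $H_{n-1}$ and $H_{n+1}$ together with Whitehead's theorem. One small imprecision worth flagging, though it does not affect the lemma itself: for $n=4$ the group $[\bigvee S^{n+1}_i,\bigvee S^{n-1}_i]$ is \emph{not} just $\bigoplus\mathbb{Z}_2$ as you claim, since $\pi_5(\bigvee S^3_i)$ also contains the Whitehead products $[\iota_3^i,\iota_3^j]$ (compare the decomposition used in Lemma \ref{thm:21}); the rest of your extra discussion about composition matching matrix multiplication is likewise unnecessary for the stated one-to-one correspondence and would need to account for these terms.
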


Then we discuss the action of $\mathcal{E}(\bigvee_{i=1}^k (S^{n-1}_i\vee S^{n+1}_i))$ on $\pi_{2n-1}(\bigvee_{i=1}^k (S^{n-1}_i\vee S^{n+1}_i))$. By the Hilton-Milnor theorem, it is direct to check that, for $n\geq 5$,
\begin{multline*}
  \pi_{2n-1}(\bigvee_{i=1}^k (S^{n-1}_i\vee S^{n+1}_i)) \cong  \bigg(\bigoplus_{i=1}^k\pi_{2n-1}(S^{n-1}_i)\bigg) \oplus \bigg(\bigoplus_{i=1}^k\pi_{2n-1}(S^{n+1}_i)\bigg) \\
  \oplus \bigg(\bigoplus_{1\leq i,j\leq k}\mathbb{Z}\{[\iota_{n-1}^i,\iota_{n+1}^j]\}\bigg)
  \oplus\bigg(\bigoplus_{1\leq i<j\leq k}\mathbb{Z}_2\{[\iota_{n-1}^i,\iota_{n-1}^j]\eta_{2n-3}^2\}\bigg)
\end{multline*}
and
\begin{multline*}
   \pi_7(\bigvee_{i=1}^k (S^3_i\vee S^5_i)) \cong  \bigg(\bigoplus_{i=1}^k\pi_7(S^3_i)\bigg) \oplus \bigg(\bigoplus_{i=1}^k\pi_7(S^5_i)\bigg)
   \oplus \bigg(\bigoplus_{1\leq i,j\leq k}\mathbb{Z}\{[\iota_3^i,\iota_5^j]\}\bigg) \\
  \oplus\bigg(\bigoplus_{1\leq i<j\leq k}\mathbb{Z}_2\{[\iota_3^i,\iota_3^j]\eta_5^2\}\bigg)\oplus \bigg(\bigoplus_{\substack{1\leq r,s,t\leq k\\ s\leq r,\ \ s<t}}\mathbb{Z}\{[\iota_3^r,[\iota_3^s,\iota_3^t]]\}\bigg)
\end{multline*}
For the convenience of our discussion, let
\begin{gather*}
%  A= \bigoplus_{1\leq i,j\leq k}\mathbb{Z}\{[\iota_{n-1}^i,\iota_{n+1}^j]\},\quad
  B=\bigg(\bigoplus_{i=1}^k\pi_{2n-1}(S^{n-1}_i)\bigg) \oplus \bigg(\bigoplus_{i=1}^k\pi_{2n-1}(S^{n+1}_i)\bigg),\\
  C=\begin{cases}
      \bigoplus_{1\leq i<j\leq k}\mathbb{Z}_2\{[\iota_{n-1}^i,\iota_{n-1}^j]\eta_{2n-3}^2\}, & n\geq 5 \\
      \bigg(\bigoplus_{1\leq i<j\leq k}\mathbb{Z}_2\{[\iota_3^i,\iota_3^j]\eta_5^2\}\bigg)\oplus \bigg(\bigoplus_{\substack{1\leq r,s,t\leq k\\ s\leq r,\ \ s<t}}\mathbb{Z}\{[\iota_3^r,[\iota_3^s,\iota_3^t]]\}\bigg), & n=4.
    \end{cases}
\end{gather*}
Then
\[
\pi_{2n-1}(\bigvee_{i=1}^k (S^{n-1}_i\vee S^{n+1}_i))\cong \bigg(\bigoplus_{1\leq i,j\leq k}\mathbb{Z}\{[\iota_{n-1}^i,\iota_{n+1}^j]\}\bigg)\oplus B\oplus C.
\]
Assuming
$
X\simeq\bigg(\bigvee_{i=1}^k (S^{n-1}_i\vee S^{n+1}_i)\bigg)\bigcup_\phi D^{2n}
$,
it is evident that Theorem \ref{thm:7} holds if we can find some $F\in \mathcal{E}(\bigvee_{i=1}^k (S^{n-1}_i\vee S^{n+1}_i))$ such that $F\phi=\sum_{i=1}^k [\iota_{n-1}^i,\iota_{n+1}^i]+\psi$, where $\psi\in B$. This is our goal. To accomplish this, certain preparatory steps are required.

Suppose $\phi\in\pi_{2n-1}(\bigvee_{i=1}^k (S^{n-1}_i\vee S^{n+1}_i))$ is decomposed as
\[
\phi=\sum_{1\leq i,j\leq k} a_{ij}[\iota_{n-1}^i,\iota_{n+1}^j]+\phi',
\]
where $\phi'\in B\oplus C$. Set $M_\phi:=(a_{ij})_{k\times k}$. By definition, we have
\begin{equation}\label{eq:3}
  M_{\phi+\psi}=M_\phi+M_\psi
\end{equation}
and
\begin{equation}\label{eq:4}
  M_\phi=0 \iff \phi\in B\oplus C.
\end{equation}
Let $F=\begin{pmatrix}
            f & g \\
            0 & h
         \end{pmatrix}$ be an element of $\mathcal{E}(\bigvee_{i=1}^k (S^{n-1}_i\vee S^{n+1}_i))$ as per the notation in Lemma \ref{thm:3}.

\begin{lemma}\label{thm:9}
  If $M_\phi=0$, then $M_{F\phi}=0$.
\end{lemma}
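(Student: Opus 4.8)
The plan is to show that the matrix $M_\phi$ is natural with respect to the action of $F$, and that when $M_\phi=0$ the composite $F\phi$ still lands in $B\oplus C$. Concretely, I would write $\phi=\phi'$ with $\phi'\in B\oplus C$ by hypothesis and (\ref{eq:4}), and compute $F\phi'$ term by term according to the Hilton--Milnor decomposition. The key observation is that $B\oplus C$ is closed under the action of any $F=\begin{pmatrix} f & g \\ 0 & h \end{pmatrix}$: the point is that the only way to produce a nonzero $[\iota_{n-1}^i,\iota_{n+1}^j]$ summand under $F$ would be to map an $(n-1)$-cell to something involving $\iota_{n-1}$ and an $(n+1)$-cell to something involving $\iota_{n+1}$, and since $F$ preserves the skeletal filtration (up to the off-diagonal block $g$, which lowers degree), no basis element of $B\oplus C$ can acquire such a component.

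In more detail, I would check each type of generator of $B\oplus C$ separately. For a generator of $B$ of the form $\iota_{n-1}^i\gamma$ with $\gamma\in\pi_{2n-1}(S^{n-1})$, the image under $F$ is $(fi_1 p_1 + i_1 g p_2 + i_2 h p_2)$ applied appropriately; since $f$ maps $T_1$ to $T_1$, this stays in $\pi_{2n-1}(T_1)\subseteq B$. For a generator $\iota_{n+1}^i\delta$ with $\delta\in\pi_{2n-1}(S^{n+1})$, the image involves $h$ (landing in $\pi_{2n-1}(T_2)\subseteq B$) and $g$ (landing in $\pi_{2n-1}(T_1)$, again in $B$, using that $g$ is a map $T_2\to T_1$ so $g\delta\in\pi_{2n-1}(T_1)$, which is torsion and decomposes into spheres and Whitehead products of $S^{n-1}$'s — all in $B\oplus C$). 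For the generators of $C$, which are iterated Whitehead products of $\iota_{n-1}$'s, naturality of the Whitehead product shows $F$ sends them to Whitehead products of elements of $\pi_*(T_1)$, hence again into the span of $B\oplus C$. Throughout I would use bilinearity of the Whitehead product and Lemma~\ref{thm:22}/Lemma~\ref{thm:5} to expand, discarding terms that land in $B\oplus C$.

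The main obstacle I anticipate is bookkeeping: the off-diagonal block $g:T_2\to T_1$ sends an $(n+1)$-sphere into a wedge of $(n-1)$-spheres, so $g$ can be a nonzero element of $\pi_{n+1}(\bigvee S^{n-1}_i)$, and one must verify that composing a generator of $B$ (a class in $\pi_{2n-1}(T_2)$) with such a $g$ really produces only elements of $\pi_{2n-1}(T_1)$, with no $[\iota_{n-1},\iota_{n+1}]$ appearing — which is automatic since the target $T_1$ has no $(n+1)$-cells at all, so $[\iota_{n-1}^i,\iota_{n+1}^j]$ simply is not in the image. Once this is made explicit, the conclusion $M_{F\phi}=0$ follows from (\ref{eq:4}) applied to $F\phi$.

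\begin{proof}
By (\ref{eq:4}), the hypothesis $M_\phi=0$ means $\phi\in B\oplus C$. It suffices to show that $F$ maps $B\oplus C$ into itself, for then $F\phi\in B\oplus C$ and (\ref{eq:4}) gives $M_{F\phi}=0$.

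Recall $\bigvee_{i=1}^k (S^{n-1}_i\vee S^{n+1}_i)=T_1\vee T_2$ with $T_1=\bigvee_i S^{n-1}_i$, $T_2=\bigvee_i S^{n+1}_i$, and that $F=i_1fp_1+i_1gp_2+i_2hp_2$ where $f\in\mathcal{E}(T_1)$, $g\in[T_2,T_1]$, $h\in\mathcal{E}(T_2)$. In particular $F$ restricted to $T_1$ is $i_1f$, which lands in $T_1$, and $F$ restricted to $T_2$ is $i_1g+i_2h$.

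Consider first a generator of $B$. Such an element is either $i_1\circ(\iota_{n-1}^i\gamma)$ with $\gamma\in\pi_{2n-1}(S^{n-1})$, or $i_2\circ(\iota_{n+1}^i\delta)$ with $\delta\in\pi_{2n-1}(S^{n+1})$. In the first case $F\circ i_1\circ(\iota_{n-1}^i\gamma)=i_1\circ f\circ(\iota_{n-1}^i\gamma)\in\pi_{2n-1}(T_1)$; since $\pi_{2n-1}(T_1)$ decomposes by Hilton--Milnor into summands indexed by spheres $S^{n-1}$ and iterated Whitehead products of the $\iota_{n-1}^i$, every element of $\pi_{2n-1}(T_1)$ lies in $B\oplus C$. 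In the second case $F\circ i_2\circ(\iota_{n+1}^i\delta)=(i_1g+i_2h)\circ(\iota_{n+1}^i\delta)$. The summand $i_2\circ h\circ(\iota_{n+1}^i\delta)\in\pi_{2n-1}(T_2)$ is a sum of the $\iota_{n+1}^j$-generators of $B$. The summand $i_1\circ g\circ(\iota_{n+1}^i\delta)\in\pi_{2n-1}(T_1)$, which again lies in $B\oplus C$ as noted above. Thus $F$ sends every generator of $B$ into $B\oplus C$.

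Now consider a generator of $C$. Each such generator is an iterated Whitehead product of the classes $\iota_{n-1}^i$, i.e.\ of the form $[\,\cdots[\iota_{n-1}^{r},\iota_{n-1}^{s}]\cdots\,]$ possibly composed with $\eta$; each factor $\iota_{n-1}^i$ represents $i_1\circ\iota_{n-1}^i\in\pi_{n-1}(T_1\vee T_2)$. By naturality of the Whitehead product, $F$ applied to such an iterated product is the corresponding iterated Whitehead product of the elements $F\circ i_1\circ\iota_{n-1}^i=i_1\circ f\circ\iota_{n-1}^i\in\pi_{n-1}(T_1)$ (composed with the same $\eta$ if present). By bilinearity of the Whitehead product, this expands into a sum of iterated Whitehead products of the $\iota_{n-1}^j$, i.e.\ an element of $\pi_{2n-1}(T_1)$, hence of $B\oplus C$.

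Since the three types of generators above span $B\oplus C$ and the action of $F$ is additive, $F$ maps $B\oplus C$ into itself. Therefore $F\phi\in B\oplus C$, and $M_{F\phi}=0$ by (\ref{eq:4}).
\end{proof}
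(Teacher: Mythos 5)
Your proof is correct and follows the same overall strategy as the paper: reduce to showing that $F$ maps $B\oplus C$ into itself, then verify this on generators, using the key observation that $F$ restricted to $T_1$ has image in $T_1$ (so anything factoring through $T_1$ acquires no $[\iota_{n-1}^i,\iota_{n+1}^j]$ component). Where you differ is in bookkeeping: the paper first notes (via Serre finiteness) that $B$ and all of $C$ except the $n=4$ triple Whitehead products are torsion, and since $F_*$ is a homomorphism it sends torsion to torsion, which automatically has vanishing $M$-component because the $[\iota_{n-1}^i,\iota_{n+1}^j]$ summand is free. This dispatches every generator at once except $[\iota_3^r,[\iota_3^s,\iota_3^t]]$, which is then handled by the image-in-$T_1$ argument. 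You instead check each generator type explicitly, including the $\iota_{n+1}^i\delta$ case, where the split $F i_2 \circ (\iota_{n+1}^i\delta) = i_1g\iota_{n+1}^i\delta + i_2h\iota_{n+1}^i\delta$ tacitly uses that $\delta\in\pi_{2n-1}(S^{n+1})$ is a suspension (Lemma~\ref{thm:22}(1)); you gesture at this in the preamble but should state it where used. Both routes work; the paper's torsion argument is a tidy shortcut, while yours is more elementary and makes the Hilton--Milnor structure of $\pi_{2n-1}(T_1)$ and $\pi_{2n-1}(T_2)$ explicit.
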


\begin{proof}
  This lemma is equivalent to the statement that if $\phi\in B\oplus C$, then $F\phi$ also belongs to $B\oplus C$. We only need to verify this for the generators of $B\oplus C$. By Serre finiteness theorem \cite{Ser53}, $B$ is finite. Since $F\phi$ must be torsion if $\phi$ is, and therefore cannot contain any $[\iota_{n-1}^i,\iota_{n+1}^j]$, we only need to check for $\phi=[\iota_3^r,[\iota_3^s,\iota_3^t]]$ in the case $n=4$. Note that the image of $[\iota_3^r,[\iota_3^s,\iota_3^t]]$ lies in $\bigvee_{i=1}^k S^3_i$, and therefore so does that of $F[\iota_3^r,[\iota_3^s,\iota_3^t]]$ (after homotopy), which means $F\phi$ does not contain any $[\iota_3^i,\iota_5^j]$. Thus the lemma is proved.
\end{proof}

Recall that we denote by $Q_f$ the integral matrix corresponding to $f\in [\bigvee_{i=1}^k S^n_i,\bigvee_{i=1}^k S^n_i]$ (see Notation \ref{n:1}). In general, we have:

\begin{lemma}\label{thm:11}
  $M_{F\phi}=Q_fM_{\phi}Q_h^T$.
\end{lemma}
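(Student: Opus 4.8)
The plan is to compute $M_{F\phi}$ directly from the action of $F$ on the generators $[\iota_{n-1}^i,\iota_{n+1}^j]$ of the first summand, since Lemma~\ref{thm:9} already tells us the $B\oplus C$ part of $\phi$ contributes nothing to $M_{F\phi}$ and hence can be ignored. Write $\phi = \sum_{i,j} a_{ij}[\iota_{n-1}^i,\iota_{n+1}^j] + \phi'$ with $\phi'\in B\oplus C$. Since $F\phi' \in B\oplus C$ by Lemma~\ref{thm:9}, we have $M_{F\phi} = M_{F(\sum a_{ij}[\iota_{n-1}^i,\iota_{n+1}^j])}$, so it suffices to understand $F[\iota_{n-1}^i,\iota_{n+1}^j]$.

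The key computation is naturality of the Whitehead product: for a map $F$ one has $F[\alpha,\beta] = [F\alpha, F\beta]$. Here $\alpha = \iota_{n-1}^i$ is carried by the inclusion $i_1:T_1\to T_1\vee T_2$ and $\beta=\iota_{n+1}^j$ by $i_2:T_2\to T_1\vee T_2$. Because $F$ has the block form $\begin{pmatrix} f & g\\ 0 & h\end{pmatrix}$, the restriction $F\circ i_1$ equals $i_1\circ f$ (the lower-left block is zero), so $F(\iota_{n-1}^i) = \sum_s (Q_f)_{si}\,\iota_{n-1}^s$, while $F\circ i_2 = i_1\circ g + i_2\circ h$, so $F(\iota_{n+1}^j) = \sum_t (Q_h)_{tj}\,\iota_{n+1}^t + (\text{terms in }\bigvee S^{n-1})$. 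Then I expand
\[
F[\iota_{n-1}^i,\iota_{n+1}^j] = \Bigl[\sum_s (Q_f)_{si}\iota_{n-1}^s,\ \sum_t (Q_h)_{tj}\iota_{n+1}^t + g\text{-terms}\Bigr]
\]
using bilinearity of the Whitehead product (for the integer coefficients this is standard; one must be slightly careful, but bracket distributivity over sums with integer coefficients holds, possibly up to lower-order Whitehead products that land in $C$, which again do not affect $M$). The cross terms $[\iota_{n-1}^s, \iota_{n-1}^{s'}]$ coming from the $g$-part lie in $C$ (or are torsion), so they are invisible to $M$. The surviving contribution is $\sum_{s,t} (Q_f)_{si}(Q_h)_{tj}[\iota_{n-1}^s,\iota_{n+1}^t]$, whose coefficient of $[\iota_{n-1}^s,\iota_{n+1}^t]$ is $(Q_f)_{si}(Q_h)_{tj}$.

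Summing over $i,j$ with weights $a_{ij} = (M_\phi)_{ij}$, the $(s,t)$ entry of $M_{F\phi}$ is $\sum_{i,j}(Q_f)_{si}(M_\phi)_{ij}(Q_h)_{tj} = (Q_f M_\phi Q_h^T)_{st}$, which is exactly the claim. I would present this as: reduce to generators via Lemma~\ref{thm:9}; apply Whitehead-product naturality and the block structure of $F$ from Lemma~\ref{thm:3}; expand bilinearly and discard the terms that land in $B\oplus C$; read off the matrix identity.

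The main obstacle is bookkeeping the "junk" terms — making sure that every bracket produced by the $g$-block of $F$ and every correction term from non-additivity of composition with Whitehead products (Hilton's formula, cf.\ Lemmas~\ref{thm:5} and \ref{thm:18}) genuinely lands in $B\oplus C$ rather than contributing a spurious $[\iota_{n-1}^i,\iota_{n+1}^j]$. This is where the hypotheses $n\geq 4$ and the explicit Hilton-Milnor decomposition matter: the extra terms are iterated brackets of $\iota_{n-1}$'s (hence in $C$) or torsion elements of $B$, never of mixed $(n-1,n+1)$ type. Once one checks this dimension/weight count, the computation collapses to the bilinear identity above.
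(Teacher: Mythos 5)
Your proposal is correct and follows essentially the same route as the paper: reduce to the pure $[\iota_{n-1}^i,\iota_{n+1}^j]$ part via Lemma~\ref{thm:9}, use the block form of $F$ from Lemma~\ref{thm:3} and naturality of the Whitehead product, discard the $g$-cross-terms (which land in $\pi_{2n-1}(\bigvee S^{n-1}_i)\subset B\oplus C$), and read off $Q_fM_\phi Q_h^T$. One small note: your caution about ``correction terms from non-additivity of composition (Hilton's formula)'' is unnecessary at this step, since the Whitehead product itself is genuinely bilinear; the Hilton corrections only arise when post-composing a sum with a map, which occurs elsewhere in the paper but not in the expansion of $[\sum_s (Q_f)_{si}\iota_{n-1}^s, \sum_t (Q_h)_{tj}\iota_{n+1}^t]$.
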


\begin{proof}
  Firstly, we deal with the case where $\phi=\sum_{1\leq i,j\leq k} a_{ij}[\iota_{n-1}^i,\iota_{n+1}^j]$.
%  We have $M_\phi=(a_{ij})$. Suppose $Q_f=(f_i^j)$ and $Q_h=(h_i^j)$, where $i$ and $j$ correspond to the row and column of an entry respectively.
  We have
  \begin{multline*}
    F\phi=\sum_{1\leq i,j\leq k} a_{ij}[F\iota_{n-1}^i,F\iota_{n+1}^j] =\sum_{1\leq i,j\leq k} a_{ij}[f\iota_{n-1}^i,g\iota_{n+1}^j+h\iota_{n+1}^j] \\
    =\sum_{1\leq i,j\leq k} a_{ij}[f\iota_{n-1}^i,g\iota_{n+1}^j]+\sum_{1\leq i,j\leq k} a_{ij}[f\iota_{n-1}^i,h\iota_{n+1}^j].
  \end{multline*}
  The image of both $f$ and $g$ lies in $\bigvee_{i=1}^k S^{n-1}_i$, implying that $[f\iota_{n-1}^i,g\iota_{n+1}^j]$ does too. Hence we only need to analyze the part $\sum_{1\leq i,j\leq k} a_{ij}[f\iota_{n-1}^i,h\iota_{n+1}^j]$. Now the equation $M_{F\phi}=Q_fM_{\phi}Q_h^T$ can be derived through straightforward linear algebra calculations.

  In general, suppose $\phi=\phi'+\psi$, where $\phi'=\sum_{1\leq i,j\leq k} a_{ij}[\iota_{n-1}^i,\iota_{n+1}^j]$ and $\psi\in B\oplus C$. Then the above case together with Lemma \ref{thm:9}, (\ref{eq:3}) and (\ref{eq:4}) imply that
  \[
  M_{F\phi}=M_{F\phi'}+M_{F\psi}=M_{F\phi'}=Q_fM_{\phi'}Q_h^T=Q_fM_{\phi}Q_h^T.
  \]
\end{proof}

\begin{lemma}\label{thm:20}
  Assuming $n\geq 5$, if $\phi\in\pi_{2n-1}(\bigvee_{i=1}^k (S^{n-1}_i\vee S^{n+1}_i))$ is of the form  $\phi=\sum_{i=1}^k [\iota_{n-1}^i,\iota_{n+1}^i]+\psi$ with $\psi\in B\oplus C$, then for any $\alpha\in C$, there exists $F\in\mathcal{E}(\bigvee_{i=1}^k (S^{n-1}_i\vee S^{n+1}_i))$ such that $F\phi-(\phi+\alpha)\in B$.
\end{lemma}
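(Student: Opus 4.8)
\emph{Approach.} The plan is to produce a single explicit self-equivalence, of the strictly upper-triangular type of Lemma~\ref{thm:3}. Write $T_1\vee T_2=\bigvee_{i=1}^k(S^{n-1}_i\vee S^{n+1}_i)$ and take $F$ with $f=\mathrm{id}_{T_1}$, $h=\mathrm{id}_{T_2}$, and $g\in[T_2,T_1]$ still to be chosen. Since $n\ge 5$, the Hilton--Milnor theorem gives $\pi_{n+1}\bigl(\bigvee_b S^{n-1}_b\bigr)\cong(\mathbb{Z}_2)^k$ with basis $\{\iota_{n-1}^b\eta_{n-1}^2\}_{b=1}^k$, so $g$ corresponds to a matrix $(\epsilon_{bi})\in M_k(\mathbb{Z}_2)$ via $g\iota_{n+1}^i=\sum_b\epsilon_{bi}\,\iota_{n-1}^b\eta_{n-1}^2$. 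This $F$ fixes every $\iota_{n-1}^i$ and sends $\iota_{n+1}^i$ to $\iota_{n+1}^i+g\iota_{n+1}^i$; so, since $F_*$ is additive and compatible with Whitehead products, computing $F\phi$ modulo $B$ reduces to the two pieces $\sum_i F[\iota_{n-1}^i,\iota_{n+1}^i]$ and $F\psi$.

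\emph{The Whitehead-product piece.} By bilinearity, $F[\iota_{n-1}^i,\iota_{n+1}^i]=[\iota_{n-1}^i,\iota_{n+1}^i]+\sum_b\epsilon_{bi}[\iota_{n-1}^i,\iota_{n-1}^b\eta_{n-1}^2]$, and I would use the identity $[\iota_{n-1}^i,\iota_{n-1}^b\eta_{n-1}^2]=[\iota_{n-1}^i,\iota_{n-1}^b]\eta_{2n-3}^2$ --- the exact analogue of Lemma~\ref{thm:4}, valid because $\eta_{n-1}^2$ is a suspension. Summing over $i$: the diagonal terms $[\iota_{n-1}^i,\iota_{n-1}^i]\eta_{2n-3}^2$ lie in $\pi_{2n-1}(S^{n-1}_i)\subseteq B$, and since $[\iota_{n-1}^i,\iota_{n-1}^b]\eta_{2n-3}^2$ has order $2$ (so the sign in $[\iota_{n-1}^b,\iota_{n-1}^i]=\pm[\iota_{n-1}^i,\iota_{n-1}^b]$ is immaterial) one obtains
\[
\sum_i F[\iota_{n-1}^i,\iota_{n+1}^i]\equiv\sum_i[\iota_{n-1}^i,\iota_{n+1}^i]+\sum_{i<b}(\epsilon_{bi}+\epsilon_{ib})\,[\iota_{n-1}^i,\iota_{n-1}^b]\eta_{2n-3}^2\pmod B .
\]

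\emph{The $\psi$ piece and the conclusion.} Write $\psi=\psi_B+\psi_C$ with $\psi_B\in B$, $\psi_C\in C$. Because $F$ fixes each $\iota_{n-1}^i$, it fixes every generator of $C$, so $F\psi_C=\psi_C$. For $\psi_B$: $F$ fixes $\iota_{n-1}^i\beta$, and it sends $\iota_{n+1}^i\delta$ with $\delta\in\pi_{2n-1}(S^{n+1})$ --- which is in the stable range, hence a suspension --- to $\iota_{n+1}^i\delta+\sum_b\epsilon_{bi}\,\iota_{n-1}^b(\eta_{n-1}^2\delta)$ by Lemma~\ref{thm:22}(1) and associativity of composition, an element of $B$. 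Thus $F\psi\equiv\psi_C\pmod B$. Combining, $F\phi\equiv\sum_i[\iota_{n-1}^i,\iota_{n+1}^i]+\psi_C+\sum_{i<b}(\epsilon_{bi}+\epsilon_{ib})[\iota_{n-1}^i,\iota_{n-1}^b]\eta_{2n-3}^2$, whereas $\phi+\alpha\equiv\sum_i[\iota_{n-1}^i,\iota_{n+1}^i]+\psi_C+\alpha$, both modulo $B$, since $\alpha\in C$. Writing $\alpha=\sum_{i<b}\alpha_{ib}[\iota_{n-1}^i,\iota_{n-1}^b]\eta_{2n-3}^2$ with $\alpha_{ib}\in\mathbb{Z}_2$, it then suffices to set $\epsilon_{bi}=\alpha_{ib}$ for $i<b$ and all remaining entries of $(\epsilon_{bi})$ equal to $0$; this yields $F\phi-(\phi+\alpha)\in B$.

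\emph{Where the difficulty lies.} The choice of $F$ is essentially forced once one notices that perturbing the $(n+1)$-cells by $\eta^2$-multiples of the $(n-1)$-cells feeds precisely into $C$ through the ``diagonal'' Whitehead products $[\iota_{n-1}^i,\iota_{n+1}^i]$. The delicate points are bookkeeping: proving $[\iota_{n-1}^i,\iota_{n-1}^b\eta_{n-1}^2]=[\iota_{n-1}^i,\iota_{n-1}^b]\eta_{2n-3}^2$ and checking that no Hilton correction terms intrude when composing with $\delta\in\pi_{2n-1}(S^{n+1})$ (both handled by stable-range/suspension arguments), and keeping track of which summand --- $B$, $C$, or the $[\iota_{n-1},\iota_{n+1}]$-part --- each term falls into.
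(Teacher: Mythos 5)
Your proof is correct and follows essentially the same route as the paper's: the perturbation $F|_{S^{n+1}_i}=\iota_{n+1}^i+\sum_b\epsilon_{bi}\iota_{n-1}^b\eta_{n-1}^2$, the Whitehead-product identity $[\iota_{n-1}^i,\iota_{n-1}^b\eta_{n-1}^2]=[\iota_{n-1}^i,\iota_{n-1}^b]\eta_{2n-3}^2$, and the suspension argument showing $F(B)\subset B$ are precisely the moves the paper uses. The only difference is presentational: you package all the perturbations into one explicit $F$ governed by the matrix $(\epsilon_{bi})$ and track the cross-terms at once, whereas the paper constructs a single-entry $F_{ij}$ for each generator of $C$ and then composes them for a general $\alpha$.
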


\begin{proof}
  First, we prove this lemma when $\alpha$ is one of the generators of $C$, ie, $[\iota_{n-1}^i,\iota_{n-1}^j]\eta_{2n-3}^2$.

  If $\alpha=[\iota_{n-1}^i,\iota_{n-1}^j]\eta_{2n-3}^2$, let $F_{ij}$ be the self-map of $\bigvee_{i=1}^k (S^{n-1}_i\vee S^{n+1}_i)$ that satisfies $F_{ij}|_{S^{n+1}_i}=\iota_{n+1}^i+\iota_{n-1}^j\eta_{n-1}^2$ and its restriction to other parts is an inclusion. Then $F_{ij}\in \mathcal{E}(\bigvee_{i=1}^k (S^{n-1}_i\vee S^{n+1}_i))$. We have
        \begin{align*}
          F_{ij}([\iota_{n-1}^i,\iota_{n+1}^i]) & =[\iota_{n-1}^i,\iota_{n+1}^i+\iota_{n-1}^j\eta_{n-1}^2] \\
           & =[\iota_{n-1}^i,\iota_{n+1}^i]+[\iota_{n-1}^i,\iota_{n-1}^j\eta_{n-1}^2] \\
           & =[\iota_{n-1}^i,\iota_{n+1}^i]+[\iota_{n-1}^i,\iota_{n-1}^j]\eta_{2n-3}^2,
        \end{align*}
        and for any $\beta\in\pi_{2n-1}(S^{n+1})$,
        \[
        F_{ij}(\iota_{n+1}^i\beta)=(\iota_{n+1}^i+\iota_{n-1}^j\eta_{n-1}^2)\beta= \iota_{n+1}^i\beta+\iota_{n-1}^j\eta_{n-1}^2\beta\in B.
        \]
  By definition, $F_{ij}$ fixes $[\iota_{n-1}^s,\iota_{n+1}^s]$ ($s\neq i$) and elements in $\pi_{2n-1}(S^{n-1}_i)$ and $C$. Therefore, in this case,
  \[
  F_{ij}\phi-(\phi+\alpha)=F_{ij}\phi-(\phi+[\iota_{n-1}^i,\iota_{n-1}^j]\eta_{2n-3}^2)\in B.
  \]

  It is important to note that in the above verification, for each generator $\alpha$ of $C$, we construct an associated map $F\in\mathcal{E}(\bigvee_{i=1}^k (S^{n-1}_i\vee S^{n+1}_i))$ which satisfies that
  \begin{enumerate}
    \item $F\phi-(\phi+\alpha)\in B$;
    \item $F$ fixes elements in $C$;
    \item $F(B)\subset B$.
  \end{enumerate}
  Therefore, if $\alpha$ is generally the sum of some generators of $C$, we can construct a map $F\in\mathcal{E}(\bigvee_{i=1}^k (S^{n-1}_i\vee S^{n+1}_i))$ through the composition of those corresponding to the generators used in the summation. It is straightforward to verify that such an $F$ meets our specified requirements.
\end{proof}

Lemma \ref{thm:20} can be strengthened as follows when $n=4$.

\begin{lemma}\label{thm:19}
  If $\phi=\sum_{i=1}^k [\iota_3^i,\iota_5^i]+\psi\in\pi_7(\bigvee_{i=1}^k (S^3_i\vee S^5_i))$ with $\psi\in B\oplus C$, then for any $\alpha\in C$, there exists $F\in\mathcal{E}(\bigvee_{i=1}^k (S^3_i\vee S^5_i))$ such that $F\phi=\phi+\alpha$.
\end{lemma}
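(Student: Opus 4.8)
The goal is to upgrade Lemma \ref{thm:20} in the case $n=4$ from the approximate statement ``$F\phi-(\phi+\alpha)\in B$'' to the exact statement ``$F\phi=\phi+\alpha$''. The plan is to revisit the construction of the self-maps $F_{ij}$ used in the proof of Lemma \ref{thm:20} and check that, when $n=4$, the ``error terms'' landing in $B$ actually vanish, or else can be corrected. Concretely, for a generator $\alpha=[\iota_3^i,\iota_3^j]\eta_5^2$ of the first summand of $C$, I would take the same map $F_{ij}$ defined by $F_{ij}|_{S^5_i}=\iota_5^i+\iota_3^j\eta_3^2$ and inclusion elsewhere, and recompute $F_{ij}\phi$. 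The key new phenomenon is that the potential error term $\iota_3^j\eta_3^2\beta$ coming from $F_{ij}(\iota_5^i\beta)$ now lives in $\iota_3^j\cdot\eta_3^2\cdot\pi_7(S^5)$; since $\eta_3^2\in\pi_5(S^3)$ and we then compose with an element of $\pi_7(S^5)$, this sits in $\eta_3^2\circ\pi_7(S^5)$, and one checks via Lemma \ref{thm:16} (the relation $\eta_3\eta_4\eta_5\eta_6=0$, since $\pi_7(S^5)\cong\mathbb{Z}_2\{\eta_5^2\}$ so $\beta$ is a multiple of $\eta_5^2=\eta_5\eta_6$) that $\eta_3^2\eta_5^2=\eta_3\eta_4\eta_5\eta_6=0$. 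Hence this error term vanishes.

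\textbf{Handling the second summand of $C$.} For a generator $\alpha=[\iota_3^r,[\iota_3^s,\iota_3^t]]$ I would similarly construct a self-map $F$ of $\bigvee_{i=1}^k(S^3_i\vee S^5_i)$ realizing $\alpha$. The natural candidate is the map with $F|_{S^5_r}=\iota_5^r+[\iota_3^s,\iota_3^t]$ (note $[\iota_3^s,\iota_3^t]\in\pi_5(\bigvee S^3_i)$, so this is a legitimate element of $\pi_5(\bigvee(S^3_i\vee S^5_i))$) and inclusion elsewhere; then $F([\iota_3^r,\iota_5^r])=[\iota_3^r,\iota_5^r]+[\iota_3^r,[\iota_3^s,\iota_3^t]]$, as desired. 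Again I must check the error term $F(\iota_5^r\beta)=(\iota_5^r+[\iota_3^s,\iota_3^t])\beta$; since $\beta$ is a suspended element in $\pi_7(S^5)\cong\mathbb{Z}_2$ and $[\iota_3^s,\iota_3^t]$ has order $2$, Lemma \ref{thm:22}(1) gives $(\iota_5^r+[\iota_3^s,\iota_3^t])\beta=\iota_5^r\beta+[\iota_3^s,\iota_3^t]\beta$, so the error term is $[\iota_3^s,\iota_3^t]\eta_5^2=[\iota_3^s,\iota_3^t]\circ\eta_5\eta_6$. One then identifies this with a triple/quadruple composition of Hopf maps out of $S^3$'s; writing $[\iota_3^s,\iota_3^t]=[\iota_3^s,\iota_3^t]$ and precomposing with $\eta_5^2$ reduces (via the same $\eta_3\eta_4\eta_5\eta_6=0$ relation, applied after the appropriate Hilton/Whitehead identity expressing $[\iota_3^s,\iota_3^t]\circ\eta_5$ in terms of Whitehead products composed with $\eta$) to zero; I would also need $F$ to fix $[\iota_3^s,\iota_5^s]$ and $[\iota_3^t,\iota_5^t]$, which holds because $F$ only modifies the $S^5_r$ cell, and $r$ may equal $s$ but the extra term $[\iota_3^s,[\iota_3^s,\iota_3^t]]$ produced there is again an element we are free to allow since we are targeting exactly $\phi+\alpha$ — here I must be a little careful and choose the order of the elementary maps so that the accumulated corrections land exactly on the prescribed $\alpha$.

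\textbf{Assembling the general case.} Once each generator $\alpha$ of $C$ is realized exactly by some $F$ that (1) sends $\phi$ to $\phi+\alpha$, (2) fixes all other generators of $C$, and (3) fixes $B$ pointwise (or at least maps $B$ into $B$ — but for the exact statement I will want it to fix the relevant part), I would compose these maps over the generators appearing in an arbitrary $\alpha\in C$. Because each elementary map fixes $C$ except for adding its own generator, and fixes $B$, the composite $F$ satisfies $F\phi=\phi+\alpha$ on the nose. The main obstacle I anticipate is precisely the bookkeeping in property (3): ensuring that the error terms $\iota_5^i\beta\mapsto \iota_5^i\beta + (\text{correction})$ have vanishing correction, which comes down entirely to the vanishing of $\eta_3^2\eta_5^2$ and of $[\iota_3^s,\iota_3^t]\eta_5^2$ in $\pi_7(\bigvee S^3_i)$; the first is immediate from $\eta_3\eta_4\eta_5\eta_6=0$, and the second requires expanding the Whitehead product composed with $\eta$ and again invoking that relation together with Lemma \ref{thm:24}-type Hilton–Milnor bookkeeping. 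A secondary subtlety is that when $r\in\{s,t\}$ the map modifying $S^5_r$ also perturbs $[\iota_3^r,\iota_5^r]$ by an extra term $[\iota_3^r,[\iota_3^s,\iota_3^t]]$, so one must either exclude such degenerate configurations from the generating set of $C$ (consistent with the index constraints $s\leq r$, $s<t$ already imposed) or absorb the extra term; checking that the stated index ranges make this consistent is the final step.
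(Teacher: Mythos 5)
The main idea (reuse the maps $F_{ij}$ from Lemma~\ref{thm:20}, introduce an extra elementary map modifying $S^5_r$ by $[\iota_3^s,\iota_3^t]$ to realize the new generators of $C$, and compose) matches the paper's proof, and your treatment of the first type of generator $[\iota_3^i,\iota_3^j]\eta_5^2$ is correct: the potential error term $\iota_3^j\eta_3^2\eta_5^2=\iota_3^j\eta_3\eta_4\eta_5\eta_6$ vanishes by Lemma~\ref{thm:16}(3).

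However, there is a genuine error in your handling of the second type $\alpha=[\iota_3^r,[\iota_3^s,\iota_3^t]]$. You claim that the error term
\[
[\iota_3^s,\iota_3^t]\eta_5^2=[\iota_3^s,\iota_3^t]\circ\eta_5\eta_6
\]
reduces to zero via the relation $\eta_3\eta_4\eta_5\eta_6=0$. This is false: for $s<t$ the Whitehead product $[\iota_3^s,\iota_3^t]\in\pi_5(\bigvee S^3_i)$ lives between \emph{distinct} wedge factors; it generates an infinite-cyclic summand and is not a composition through $\eta_3$. Indeed, $[\iota_3^s,\iota_3^t]\eta_5^2$ is explicitly one of the listed $\mathbb{Z}_2$-generators of $C$ in the $n=4$ description of $\pi_7(\bigvee(S^3_i\vee S^5_i))$, so it is nonzero. (The relation $\eta_3\eta_4\eta_5\eta_6=0$ only lets you discard error terms of the form $\iota_3^j\eta_3^2\eta_5^2$, i.e.\ compositions through a single $S^3$, not Whitehead products between distinct $S^3$'s composed with $\eta_5^2$; likewise $[\iota_3,\iota_3]=0$ on one sphere does not help here since $s\neq t$.) The paper resolves this differently: it acknowledges that $G_{rst}\phi$ picks up either $[\iota_3^r,[\iota_3^s,\iota_3^t]]$ alone or $[\iota_3^r,[\iota_3^s,\iota_3^t]]+[\iota_3^s,\iota_3^t]\eta_5^2$, depending on whether $\psi$ contains a summand $\iota_5^r\eta_5^2$, and in the latter case composes $G_{rst}$ with $F_{st}$ to cancel the extra $[\iota_3^s,\iota_3^t]\eta_5^2$. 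Your proof as written would assert an exact identity that does not hold, and the correction step (composition with $F_{st}$) is the missing ingredient you need to add.
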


\begin{proof}
  We first show this when $\alpha$ is one of the generators of $C$, ie, $[\iota_3^i,\iota_3^j]\eta_5^2$ and $[\iota_3^r,[\iota_3^s,\iota_3^t]]$.

  (1) If $\alpha=[\iota_3^i,\iota_3^j]\eta_5^2$, let $F_{ij}$ be constructed as in the proof of Lemma \ref{thm:20}. We have already verified that
  \[
  F_{ij}([\iota_3^i,\iota_5^i])=[\iota_3^i,\iota_5^i]+[\iota_3^i,\iota_3^j]\eta_5^2,
  \]
  and $F_{ij}$ fixes $[\iota_3^s,\iota_5^s]$ ($s\neq i$) as well as elements in $\pi_7(S^3_i)$ and $C$. By Lemma \ref{thm:16}, $\pi_7(S^5)$ is generated by $\eta_5^2$, and we have
  \[
  F_{ij}(\iota_5^i\eta_5^2)=(\iota_5^i+\iota_3^j\eta_3^2)\eta_5^2= \iota_5^i\eta_5^2+\iota_3^j\eta_3\eta_4\eta_5\eta_6=\iota_5^i\eta_5^2.
  \]
  Thus in this case $F_{ij}\phi=\phi+[\iota_3^i,\iota_3^j]\eta_5^2=\phi+\alpha$.

  (2) If $\alpha=[\iota_3^r,[\iota_3^s,\iota_3^t]]$, let $G_{rst}$ be the self-map of $\bigvee_{i=1}^k (S^3_i\vee S^5_i)$ that satisfies the condition $G_{rst}|_{S^5_r}=\iota_5^r+[\iota_3^s,\iota_3^t]$ and its restriction to other parts is an inclusion. Then $G_{rst}\in \mathcal{E}(\bigvee_{i=1}^k (S^3_i\vee S^5_i))$. We have
      \begin{gather*}
        G_{rst}([\iota_3^r,\iota_5^r])=[\iota_3^r,\iota_5^r+[\iota_3^s,\iota_3^t]]= [\iota_3^r,\iota_5^r]+[\iota_3^r,[\iota_3^s,\iota_3^t]], \\
        G_{rst}(\iota_5^r\eta_5^2)=(\iota_5^r+[\iota_3^s,\iota_3^t])\eta_5^2= \iota_5^r\eta_5^2+[\iota_3^s,\iota_3^t]\eta_5^2,
      \end{gather*}
      and $G_{rst}$ fixes $[\iota_3^i,\iota_5^i]$ ($i\neq r$) as well as elements in $\pi_{7}(S^{3}_i)$ and $C$, by definition. Therefore,
      \[
      G_{rst}\phi= \phi+[\iota_3^r,[\iota_3^s,\iota_3^t]]+[\iota_3^s,\iota_3^t]\eta_5^2\text{\quad or\quad } \phi+[\iota_3^r,[\iota_3^s,\iota_3^t]],
      \]
      which depends on whether or not $\psi$ contains a summand of $\iota_5^r\eta_5^2$. If the former case occurs, one can then composite $G_{rst}$ with $F_{st}$ to produce the necessary map.

      In the general case, $\alpha$ can be decomposed into sums of some generators of $C$. Consequently, a required $F$ can be constructed through a composition of some $F_{ij}$ and $G_{rst}$ defined previously.
\end{proof}

In Lemmas \ref{thm:20} and \ref{thm:19}, write $\psi=\psi_1+\psi_2$, where $\psi_1\in B$ and $\psi_2\in C$. Let $\alpha=\phi_2$, then the following corollary is immediate.

\begin{corollary}\label{thm:13}
  If $\phi=\sum_{i=1}^k [\iota_{n-1}^i,\iota_{n+1}^i]+\psi$ is an element of $\pi_{2n-1}(\bigvee_{i=1}^k (S^{n-1}_i\vee S^{n+1}_i))$ with $\psi\in B\oplus C$, then there exists $F\in\mathcal{E}(\bigvee_{i=1}^k (S^{n-1}_i\vee S^{n+1}_i))$ such that $F\phi=\sum_{i=1}^k [\iota_{n-1}^i,\iota_{n+1}^i]+\psi'$ with $\psi'\in B$.
\end{corollary}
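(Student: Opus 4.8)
The statement to prove is Corollary~\ref{thm:13}, which unifies Lemmas~\ref{thm:20} and~\ref{thm:19}. Let me plan a proof.

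The plan is to deduce Corollary~\ref{thm:13} directly from Lemma~\ref{thm:20} (for $n\geq 5$) and Lemma~\ref{thm:19} (for $n=4$) by choosing the element $\alpha\in C$ appropriately. Write the given $\psi\in B\oplus C$ as $\psi=\psi_1+\psi_2$ according to the direct sum decomposition, so $\psi_1\in B$ and $\psi_2\in C$. The key idea is to take $\alpha=-\psi_2$ (note $C$ is a sum of a $\mathbb{Z}_2$-part and, when $n=4$, a free $\mathbb{Z}$-part, but in either case $-\psi_2$ is again a legitimate element of $C$), and apply the relevant lemma with this choice of $\alpha$.

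First I would handle the case $n\geq 5$. By Lemma~\ref{thm:20} applied with $\alpha=-\psi_2\in C$, there exists $F\in\mathcal{E}(\bigvee_{i=1}^k(S^{n-1}_i\vee S^{n+1}_i))$ such that $F\phi-(\phi+\alpha)\in B$, that is, $F\phi-(\phi-\psi_2)\in B$. Since $\phi=\sum_{i=1}^k[\iota_{n-1}^i,\iota_{n+1}^i]+\psi_1+\psi_2$, we get $\phi-\psi_2=\sum_{i=1}^k[\iota_{n-1}^i,\iota_{n+1}^i]+\psi_1$, and hence $F\phi=\sum_{i=1}^k[\iota_{n-1}^i,\iota_{n+1}^i]+\psi_1+(\text{element of }B)$. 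Setting $\psi'$ to be $\psi_1$ plus that element of $B$ (which again lies in $B$), we obtain $F\phi=\sum_{i=1}^k[\iota_{n-1}^i,\iota_{n+1}^i]+\psi'$ with $\psi'\in B$, as required.

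For $n=4$ the argument is identical but cleaner, since Lemma~\ref{thm:19} gives the exact equality $F\phi=\phi+\alpha$ rather than an equality modulo $B$: taking $\alpha=-\psi_2\in C$ yields $F\phi=\phi-\psi_2=\sum_{i=1}^k[\iota_3^i,\iota_5^i]+\psi_1$, so one may take $\psi'=\psi_1\in B$ directly. I do not anticipate any real obstacle here; the only point requiring a moment's care is to confirm that $-\psi_2$ is a valid choice of $\alpha$ (it is, since $C$ is a group and the lemmas are stated for arbitrary $\alpha\in C$), and to observe that in the $n\geq 5$ case the leftover term from Lemma~\ref{thm:20} can be absorbed into $\psi'$ because $B$ is a subgroup. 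This is precisely the content of the sentence preceding the corollary in the excerpt (``write $\psi=\psi_1+\psi_2$ \ldots\ let $\alpha=\phi_2$''), so the proof is genuinely a one-line deduction once the bookkeeping is spelled out.

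\begin{proof}
  Write $\psi=\psi_1+\psi_2$ with $\psi_1\in B$ and $\psi_2\in C$, using the direct sum decomposition of $\pi_{2n-1}(\bigvee_{i=1}^k(S^{n-1}_i\vee S^{n+1}_i))$. Since $C$ is a group, $\alpha:=-\psi_2$ is an element of $C$.

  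Suppose first that $n\geq 5$. Applying Lemma~\ref{thm:20} with this $\alpha$, there is some $F\in\mathcal{E}(\bigvee_{i=1}^k(S^{n-1}_i\vee S^{n+1}_i))$ with $F\phi-(\phi+\alpha)\in B$. Now
  \[
  \phi+\alpha=\sum_{i=1}^k[\iota_{n-1}^i,\iota_{n+1}^i]+\psi_1+\psi_2-\psi_2 =\sum_{i=1}^k[\iota_{n-1}^i,\iota_{n+1}^i]+\psi_1,
  \]
  so $F\phi=\sum_{i=1}^k[\iota_{n-1}^i,\iota_{n+1}^i]+\psi_1+\gamma$ for some $\gamma\in B$. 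Since $B$ is a subgroup, $\psi':=\psi_1+\gamma\in B$, and $F\phi=\sum_{i=1}^k[\iota_{n-1}^i,\iota_{n+1}^i]+\psi'$ as desired.

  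If $n=4$, apply Lemma~\ref{thm:19} with the same $\alpha=-\psi_2\in C$ to obtain $F\in\mathcal{E}(\bigvee_{i=1}^k(S^3_i\vee S^5_i))$ with $F\phi=\phi+\alpha=\sum_{i=1}^k[\iota_3^i,\iota_5^i]+\psi_1$. Taking $\psi'=\psi_1\in B$ completes the proof.
\end{proof}
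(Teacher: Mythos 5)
Your proof is correct and follows exactly the route the paper indicates in the sentence preceding the corollary: decompose $\psi$ as $\psi_1+\psi_2$ with $\psi_1\in B$, $\psi_2\in C$, and feed an appropriate $\alpha\in C$ into Lemma~\ref{thm:20} (for $n\geq 5$) or Lemma~\ref{thm:19} (for $n=4$). The only substantive difference is a sign: the paper writes ``let $\alpha=\phi_2$'' (a typo for $\psi_2$), whereas you correctly take $\alpha=-\psi_2$. For $n\geq 5$ the two choices coincide because $C$ is an elementary abelian $2$-group, but for $n=4$ the summand $C$ contains free $\mathbb{Z}$-factors generated by the triple Whitehead products $[\iota_3^r,[\iota_3^s,\iota_3^t]]$, so $\psi_2\neq -\psi_2$ in general and only $\alpha=-\psi_2$ kills the $C$-component of $F\phi$. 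Your version is therefore the one that actually works in both cases, and your observation that the leftover $B$-term from Lemma~\ref{thm:20} can be absorbed into $\psi'$ because $B$ is a subgroup is exactly the point the paper leaves implicit.
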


Now we are ready to achieve our goal and prove Theorem \ref{thm:7}. For any $\phi\in\pi_{2n-1}(\bigvee_{i=1}^k (S^{n-1}_i\vee S^{n+1}_i))$, we define
\[
X(\phi)=\bigg(\bigvee_{i=1}^k (S^{n-1}_i\vee S^{n+1}_i)\bigg)\bigcup_\phi D^{2n}.
\]

\begin{lemma}\label{thm:14}
  There exists $\phi=\sum_{i=1}^k [\iota_{n-1}^i,\iota_{n+1}^i]+\psi$ with $\psi\in B$ such that $X\simeq X(\phi)$.
\end{lemma}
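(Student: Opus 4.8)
\textbf{Proof plan for Lemma \ref{thm:14}.}

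The starting point is the minimal cell structure established via Proposition \ref{thm:8} and hypothesis (H): since $\Sq^2$ is trivial and the homology is torsion-free with $H_n(X)=0$, the $(n+1)$-skeleton contains no $\Sigma^{n-3}\mathbb{C}P^2$ or $S^n$ summands, so $X\simeq X(\phi)$ for some $\phi\in\pi_{2n-1}(\bigvee_{i=1}^k(S^{n-1}_i\vee S^{n+1}_i))$. Decompose $\phi=\sum_{1\leq i,j\leq k}a_{ij}[\iota_{n-1}^i,\iota_{n+1}^j]+\phi'$ with $\phi'\in B\oplus C$, so that $M_\phi=(a_{ij})$. The plan is to run a two-stage normalization of $\phi$ using self-homotopy equivalences: first normalize the ``linking matrix'' $M_\phi$ to the identity by Lemma \ref{thm:11}, then clear the $C$-component by Corollary \ref{thm:13}, and finally invoke Lemma \ref{thm:12} to translate a change of attaching map into a homotopy equivalence of the total space.

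The first stage is where Poincar\'e duality enters, and I expect it to be the main obstacle. The matrix $M_\phi$ encodes the intersection pairing $H^{n-1}(X)\otimes H^{n+1}(X)\to H^{2n}(X)\cong\mathbb{Z}$ (equivalently the cup product $H^{n-1}\otimes H^{n+1}\to H^{2n}$, read off from how the top cell is attached along Whitehead products $[\iota_{n-1}^i,\iota_{n+1}^j]$). Poincar\'e duality forces this pairing to be unimodular, i.e. $M_\phi\in GL_k(\mathbb{Z})$ — this is the analogue of the statement cited from \cite[page 182]{Wal62} for the earlier intersection form $B_\phi$, and it needs to be argued carefully here (the duality isomorphism $-\cap[X]\colon H^{n-1}(X)\xrightarrow{\cong}H_{n+1}(X)$ composed with the evaluation against $H^{n+1}$ exhibits $M_\phi$ as the matrix of an isomorphism). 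Given unimodularity, choose $f\in\mathcal{E}(\bigvee S^{n-1}_i)\cong GL_k(\mathbb{Z})$ with $Q_f=M_\phi^{-1}$ and $h=\mathrm{id}$, assembled into $F=\begin{pmatrix}f&0\\0&\mathrm{id}\end{pmatrix}\in\mathcal{E}(\bigvee_{i=1}^k(S^{n-1}_i\vee S^{n+1}_i))$ via Lemma \ref{thm:3}; then Lemma \ref{thm:11} gives $M_{F\phi}=Q_fM_\phi Q_h^T=M_\phi^{-1}M_\phi=I_k$, so $F\phi=\sum_{i=1}^k[\iota_{n-1}^i,\iota_{n+1}^i]+\phi''$ with $\phi''\in B\oplus C$ (using Lemma \ref{thm:9} to see the correction term stays in $B\oplus C$).

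The second stage is then immediate: apply Corollary \ref{thm:13} to $F\phi$ to obtain $F'\in\mathcal{E}(\bigvee_{i=1}^k(S^{n-1}_i\vee S^{n+1}_i))$ with $F'F\phi=\sum_{i=1}^k[\iota_{n-1}^i,\iota_{n+1}^i]+\psi$ for some $\psi\in B$. Setting $\phi^\ast:=F'F\phi$ and noting $F'F\in\mathcal{E}(\bigvee_{i=1}^k(S^{n-1}_i\vee S^{n+1}_i))$, Lemma \ref{thm:12} (applicable since $2n\geq n+2+2$, i.e. $n\geq 4$, and the skeleton is simply connected of dimension $n+1$) gives $X\simeq X(\phi)\simeq X(\phi^\ast)$, which is the desired $\phi$. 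One point to double-check along the way is the $n=4$ case of the Hilton-Milnor decomposition, where $C$ carries the extra triple-bracket summands $[\iota_3^r,[\iota_3^s,\iota_3^t]]$; but Lemma \ref{thm:19} (the $n=4$ strengthening of Lemma \ref{thm:20}) has already been arranged to handle exactly these, so Corollary \ref{thm:13} applies uniformly. The only genuinely new input needed beyond the cited lemmas is the unimodularity of $M_\phi$, so that is where I would concentrate the write-up.
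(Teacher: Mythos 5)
Your proposal is correct and follows essentially the same two-stage argument as the paper: first use Poincar\'e duality (via the cup-product interpretation of $M_\phi$) to deduce that $M_\phi\in GL_k(\mathbb{Z})$, apply Lemma \ref{thm:11} with $Q_f=M_\phi^{-1}$, $Q_h=I$ to normalize $M_{F\phi}=I$, then clear the $C$-component via Corollary \ref{thm:13} and invoke Lemma \ref{thm:12}. You correctly pinpoint the unimodularity of $M_\phi$ as the one input not already supplied by the cited lemmas, which is exactly where the paper's proof concentrates as well.
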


\begin{proof}
  By Corollary \ref{thm:13}, the lemma holds if we can prove that there exists $\phi=\sum_{i=1}^k [\iota_{n-1}^i,\iota_{n+1}^i]+\psi$ with $\psi\in B\oplus C$ such that $X\simeq X(\phi)$.

  Assume $X\simeq X(\phi)$ with $M_\phi=(a_{ij})$, ie,
  \[
  \phi=\sum_{1\leq i,j\leq k} a_{ij}[\iota_{n-1}^i,\iota_{n+1}^j]+\phi',
  \]
  where $\phi'\in B\oplus C$. Then $M_{\phi}$ defines the cup product in $X(\phi)$. Specifically, let $\alpha_i\in H^{n-1}(X(\phi))$, $\beta_j\in H^{n+1}(X(\phi))$, and $\gamma\in H^{2n}(X(\phi))$ represent the Kronecker dual of elements in $H_{n-1}(X(\phi))$, $H_{n+1}(X(\phi))$, and $H_{2n}(X(\phi))$ which are represented by $S^{n-1}_i$, $S^{n+1}_j$, and $D^{2n}$ respectively. Then $\alpha_i\cup\beta_j=a_{ij}\gamma$. Since $X(\phi)$ is a Poincar\'e complex, the Poincar\'e duality guarantees that $M_{\phi}$ is an invertible integral matrix. Let $I$ represent the identity matrix and choose $F=\begin{pmatrix}
                                                                      f & 0 \\
                                                                      0 & h
                                                                    \end{pmatrix}\in \mathcal{E}(\bigvee_{i=1}^k (S^{n-1}_i\vee S^{n+1}_i))$
with $Q_f=M_{\phi}^{-1}$ and $Q_h=I$. According to Lemma \ref{thm:11}, we can conclude that $M_{F\phi}=Q_fM_{\phi}Q_h^T=I$, or alternatively,
\[
F\phi=\sum_{i=1}^k [\iota_{n-1}^i,\iota_{n+1}^i]+\psi,
\]
where $\psi\in B\oplus C$. Lemma \ref{thm:12} ensures that $X(F\phi)\simeq X(\phi)$. Consequently, by substituting $\phi$ with $F\phi$, the lemma is established.
\end{proof}

\begin{proof}[Proof of Theorem \ref{thm:7}]
  By Lemma \ref{thm:14}, there exists
  \[
  \phi=\sum_{i=1}^k [\iota_{n-1}^i,\iota_{n+1}^i]+\sum_{i=1}^{k}\iota_{n-1}^i\alpha_i+\sum_{i=1}^{k}\iota_{n+1}^i\beta_i
  \]
  such that $X\simeq X(\phi)$, where $\alpha_i\in\pi_{2n-1}(S^{n-1})$ and $\beta_i\in\pi_{2n-1}(S^{n+1})$. Let $X_i=(S^{n-1}\vee S^{n+1})\cup_{\phi_i} D^{2n}$ where $\phi_i=[\iota_{n-1},\iota_{n+1}]+\alpha_i+\beta_i$. Then $H_{n-1}(X_i)\cong\mathbb{Z}$ and $X\simeq X_1\# X_2\#\cdots\# X_n$.
\end{proof}

\section{Proof of Theorem \ref{thm:15}}\label{sec:5}

Let $X$ be a $2$-connected $8$-dimensional Poincar\'e complex which has torsion-free homology and satisfies hypothesis (H) (where $n=4$). Suppose $\rank H_3(X)=k$. Define
\[
X_{r,s}=(S^3\vee S^5)\cup_{[\iota_3,\iota_5]+r\nu'\eta_6+s\eta_5^2} D^8
\]
for $r,s\in \{0,1\}$. In accordance with Theorem \ref{thm:7}, it can be inferred that $X\simeq X_1\# X_2\# \cdots \# X_k$, where each $X_i$ ($1\leq i\leq k$) belongs to $\{X_{0,0},X_{0,1},X_{1,0},{X_{1,1}}\}$. To prove Theorem \ref{thm:15}, all we need is to compare the homotopy types of connected sums derived from various combinations of $X_{r,s}$. We initially focus on the case where $k=2$.

\begin{proposition}\label{thm:17}
  \begin{enumerate}
    \item $X_{1,0}\#X_{1,0}\simeq X_{1,0}\# S^3\times S^5$;
    \item $X_{0,1}\# X_{0,1}\simeq X_{0,1}\# S^3\times S^5$;
    \item $X_{1,0}\# X_{0,1}\simeq X_{1,1}\# X_{1,1}$;
    \item $X_{1,1}\# X_{1,0}\simeq X_{1,1}\# X_{0,1}\simeq X_{1,1}\# S^3\times S^5$.
  \end{enumerate}
\end{proposition}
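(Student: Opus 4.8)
The strategy is to realize each of the four equivalences by an explicit self-homotopy equivalence of a wedge of spheres and to invoke Lemma~\ref{thm:12}. Writing $S^3\times S^5=X_{0,0}$ and recalling that a connected sum of Poincar\'e complexes with a single top cell is obtained by wedging the lower skeleta and adding the attaching maps (as used in the proof of Theorem~\ref{thm:1}), every connected sum occurring in the statement has the form $(S^3_1\vee S^5_1\vee S^3_2\vee S^5_2)\cup_\phi D^8$ with
\[
\phi=[\iota_3^1,\iota_5^1]+r_1\iota_3^1\nu'\eta_6+s_1\iota_5^1\eta_5^2+[\iota_3^2,\iota_5^2]+r_2\iota_3^2\nu'\eta_6+s_2\iota_5^2\eta_5^2,
\]
the inclusions into the wedge being suppressed. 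This is the case $k=2$ of the setting of Section~\ref{sec:4}, and the intersection matrix $M_\phi$ of every such $\phi$ is the $2\times2$ identity $I$. By Lemma~\ref{thm:12}, two such complexes are homotopy equivalent if and only if their attaching maps differ by $\pm F$ for some $F\in\mathcal{E}(S^3_1\vee S^5_1\vee S^3_2\vee S^5_2)$, and by Lemma~\ref{thm:3} such an $F$ is recorded by a triple $(f,g,h)$ with $f\in GL_2(\mathbb{Z})$ acting on the $S^3$-summands, $h\in GL_2(\mathbb{Z})$ on the $S^5$-summands and $g$ mixing them. We shall use only three kinds of such maps: block-diagonal ones with $g=0$, $f$ an elementary matrix, and $h=(Q_f^{-1})^{T}$ (forced by Lemma~\ref{thm:11} so that $I$ is preserved); the maps $F_{ij}$ with $F_{ij}|_{S^5_i}=\iota_5^i+\iota_3^j\eta_3^2$ from the proofs of Lemmas~\ref{thm:20} and~\ref{thm:19}; and the permutation interchanging the two summands.

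The computations rely on a short list of relations: the Hilton formula $(\iota_3^1+\iota_3^2)\nu'\eta_6=\iota_3^1\nu'\eta_6+\iota_3^2\nu'\eta_6+[\iota_3^1,\iota_3^2]\eta_5^2$ of Lemma~\ref{thm:18} (and Corollary~\ref{thm:23}), the bilinearity and naturality of Whitehead products, the identities $2\nu'\eta_6=0$, $2\eta_5^2=0$, $[\iota_3^i,\iota_3^i]\eta_5^2=0$ and $\eta_3^2\eta_5^2=\eta_3\eta_4\eta_5\eta_6=0$ (Lemma~\ref{thm:16}), and the fact that $\eta_5^2$ is a suspension so that $(\iota_5^1+\iota_5^2)\eta_5^2=\iota_5^1\eta_5^2+\iota_5^2\eta_5^2$. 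Concretely, for (1) one applies to the attaching map of $X_{1,0}\#X_{1,0}$ the block-diagonal map with $f\iota_3^1=\iota_3^1-\iota_3^2$, $f\iota_3^2=\iota_3^2$; writing $\iota_3^1\nu'\eta_6+\iota_3^2\nu'\eta_6=(\iota_3^1+\iota_3^2)\nu'\eta_6+[\iota_3^1,\iota_3^2]\eta_5^2$, the $\nu'\eta_6$-part collapses to $\iota_3^2\nu'\eta_6$ while a term $[\iota_3^1,\iota_3^2]\eta_5^2$ is left over; composing with $F_{12}$ (which sends $[\iota_3^1,\iota_5^1]\mapsto[\iota_3^1,\iota_5^1]+[\iota_3^1,\iota_3^2]\eta_5^2$ and fixes $\iota_5^i\eta_5^2$) cancels that term, and the permutation of the two summands yields the attaching map of $X_{1,0}\#S^3\times S^5$. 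Part (2) is the same argument carried out on the $S^5$-summands, where no Hilton correction even appears since $\eta_5^2$ suspends. For (3) one applies to $X_{1,0}\#X_{0,1}$ the map with $f\iota_3^1=\iota_3^1+\iota_3^2$, $f\iota_3^2=\iota_3^2$ and the forced $h$: this turns $\iota_3^1\nu'\eta_6$ into $\iota_3^1\nu'\eta_6+\iota_3^2\nu'\eta_6+[\iota_3^1,\iota_3^2]\eta_5^2$ and simultaneously turns $\iota_5^2\eta_5^2$ into $\iota_5^1\eta_5^2+\iota_5^2\eta_5^2$, and after removing the leftover class in $C$ by $F_{12}$ one lands exactly on the attaching map of $X_{1,1}\#X_{1,1}$. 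Part (4) is analogous: $X_{1,1}\#X_{1,0}$ is treated with $f\iota_3^1=\iota_3^1-\iota_3^2$, $f\iota_3^2=\iota_3^2$ followed by $F_{12}$ to absorb $\iota_3^2\nu'\eta_6$, and $X_{1,1}\#X_{0,1}$ with the corresponding operation on the $S^5$-summands to absorb $\iota_5^2\eta_5^2$; both are carried to the attaching map of $X_{1,1}\#S^3\times S^5$.

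The argument is essentially bookkeeping, and the part demanding the most care is the control of the correction terms. Each elementary operation on the $S^3$-summands, through the Hilton formula, produces classes in the summand $C$ of Section~\ref{sec:4} (here only the $[\iota_3^i,\iota_3^j]\eta_5^2$, since the maps we use generate no triple Whitehead brackets), while preserving the intersection form forces a simultaneous operation on the $S^5$-summands that must not reintroduce any $\iota_5^i\eta_5^2$; the order-$2$ relations together with $\eta_3^2\eta_5^2=\eta_3\eta_4\eta_5\eta_6=0$ are precisely what make everything collapse. Once the requisite identity in $\pi_7(S^3_1\vee S^5_1\vee S^3_2\vee S^5_2)$ has been verified in each of (1)--(4), Lemma~\ref{thm:12} delivers the homotopy equivalence and finishes the proof.
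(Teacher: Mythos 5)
Your proof is correct and reaches the same conclusion via the same basic machinery (Lemma~\ref{thm:12}, the Hilton formula of Lemma~\ref{thm:18}, and the $F_{ij}$ and $G_{rst}$ constructions from Lemma~\ref{thm:19}), but you organize the case analysis differently from the paper. The paper constructs a \emph{single} self-equivalence
\[
F=\iota_3^1\vee (\iota_5^1-\iota_5^2)\vee (\iota_3^1+\iota_3^2)\vee \iota_5^2
\]
and, after computing its action on the handful of generators that occur ($[\iota_3^i,\iota_5^i]$, $\iota_3^i\nu'\eta_6$, $\iota_5^i\eta_5^2$), reads off all four equivalences at once; each application leaves a residual $[\iota_3^1,\iota_3^2]\eta_5^2\in C$ which is then removed by invoking Lemma~\ref{thm:19}. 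You instead build, for each of the four cases, a case-specific block-diagonal map with $Q_f$ an elementary matrix and $Q_h=(Q_f^{-1})^T$ forced so that the intersection form is preserved, and then compose with the relevant $F_{ij}$ to clean up the leftover class in $C$. Both are valid; the paper's version is slicker (one map serves all four cases, and Lemma~\ref{thm:19} is cited as a black box), while yours is more mechanical and makes explicit the ``elementary operation $+$ correction'' structure, which arguably generalizes more transparently. Your observation that no triple Whitehead brackets arise --- because $H_1(\nu'\eta_6)=H_2(\nu'\eta_6)=0$ as noted in the proof of Lemma~\ref{thm:18} --- and that $\eta_3^2\eta_5^2=\eta_3\eta_4\eta_5\eta_6=0$ controls the $F_{ij}$-corrections is exactly the bookkeeping the paper relies on as well.

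Two small inaccuracies in your write-up that do not affect the argument: in part~(1), with $f\iota_3^1=\iota_3^1-\iota_3^2$ and $f\iota_3^2=\iota_3^2$ one has $f\iota_3^1+f\iota_3^2=\iota_3^1$, so the $\nu'\eta_6$-part collapses to $\iota_3^1\nu'\eta_6$ rather than $\iota_3^2\nu'\eta_6$ (so the permutation at the end is unnecessary); and in part~(4), the composition with $F_{12}$ absorbs the leftover $[\iota_3^1,\iota_3^2]\eta_5^2$, not $\iota_3^2\nu'\eta_6$ (the latter has already been cancelled by the elementary map itself). These are labeling and wording slips; the underlying computations go through.
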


\begin{proof}
  Construct a self-map $F$ of $S_1^3\vee S_1^5\vee S_2^3\vee S_2^5$ as follows:
  \[
  F=\iota_3^1\vee (\iota_5^1-\iota_5^2)\vee (\iota_3^1+\iota_3^2)\vee \iota_5^2.
  \]
  It is straightforward to verify that $F\in \mathcal{E}(S_1^3\vee S_1^5\vee S_2^3\vee S_2^5)$. We analyze the action of $F$ on certain elements of $\pi_7(S_1^3\vee S_1^5\vee S_2^3\vee S_2^5)$:
  \begin{itemize}
    \item $F([\iota_3^1,\iota_5^1]+[\iota_3^2,\iota_5^2])= [\iota_3^1,\iota_5^1-\iota_5^2]+[\iota_3^1+\iota_3^2,\iota_5^2]= [\iota_3^1,\iota_5^1]+[\iota_3^2,\iota_5^2]$;
    \item $F(\iota_3^1\nu'\eta_6)=\iota_3^1\nu'\eta_6$;
    \item $F(\iota_5^1\eta_5^2)=(\iota_5^1-\iota_5^2)\eta_5^2= \iota_5^1\eta_5^2-\iota_5^2\eta_5^2=\iota_5^1\eta_5^2+\iota_5^2\eta_5^2$;
    \item $F(\iota_3^2\nu'\eta_6)=(\iota_3^1+\iota_3^2)\nu'\eta_6= \iota_3^1\nu'\eta_6+\iota_3^2\nu'\eta_6+[\iota_3^1,\iota_3^2]\eta_5^2$ (see Lemma \ref{thm:18}).
  \end{itemize}
  Combining the above, we have:

  (1) $F([\iota_3^1,\iota_5^1]+[\iota_3^2,\iota_5^2]+\iota_3^2\nu'\eta_6)= [\iota_3^1,\iota_5^1]+[\iota_3^2,\iota_5^2]+\iota_3^1\nu'\eta_6+\iota_3^2\nu'\eta_6+[\iota_3^1,\iota_3^2] \eta_5^2$. Combined with Lemmas \ref{thm:19} and \ref{thm:12}, this implies that $S^3\times S^5\# X_{1,0}\simeq X_{1,0}\# X_{1,0}$.

  (2) $F([\iota_3^1,\iota_5^1]+[\iota_3^2,\iota_5^2]+\iota_5^1\eta_5^2)= [\iota_3^1,\iota_5^1]+[\iota_3^2,\iota_5^2]+\iota_5^1\eta_5^2+\iota_5^2\eta_5^2$, which means $X_{0,1}\# S^3\times S^5\simeq X_{0,1}\# X_{0,1}$.

  (3) $F([\iota_3^1,\iota_5^1]+[\iota_3^2,\iota_5^2]+\iota_3^2\nu'\eta_6+\iota_5^1\eta_5^2)= [\iota_3^1,\iota_5^1]+[\iota_3^2,\iota_5^2]+\iota_3^1\nu'\eta_6+\iota_3^2\nu'\eta_6+ \iota_5^1\eta_5^2+\iota_5^2\eta_5^2 +[\iota_3^1,\iota_3^2]\eta_5^2$, which indicates that $X_{0,1}\# X_{1,0}\simeq X_{1,1}\# X_{1,1}$.

  (4) $F([\iota_3^1,\iota_5^1]+[\iota_3^2,\iota_5^2]+\iota_3^1\nu'\eta_6+\iota_5^1\eta_5^2)= [\iota_3^1,\iota_5^1]+[\iota_3^2,\iota_5^2]+\iota_3^1\nu'\eta_6+\iota_5^1\eta_5^2+\iota_5^2\eta_5^2$, which suggests that $X_{1,1}\# S^3\times S^5\simeq X_{1,1}\# X_{0,1}$.

        $F([\iota_3^1,\iota_5^1]+[\iota_3^2,\iota_5^2]+\iota_3^2\nu'\eta_6+\iota_5^2\eta_5^2)= [\iota_3^1,\iota_5^1]+[\iota_3^2,\iota_5^2]+\iota_3^1\nu'\eta_6+\iota_3^2\nu'\eta_6+ \iota_5^2\eta_5^2 +[\iota_3^1,\iota_3^2]\eta_5^2$, which means that $S^3\times S^5\# X_{1,1}\simeq X_{1,0}\# X_{1,1}$.
\end{proof}

Before moving to the general case, we give the following lemma.

\begin{lemma}\label{thm:21}
  For $\alpha\in\pi_5(\bigvee_{i=1}^k S^3_i)\subset \pi_5(\bigvee_{i=1}^k (S^3_i\vee S^5_i))$, we have $[\iota_3^i,\alpha]\in C$ where $1\leq i\leq k$.
\end{lemma}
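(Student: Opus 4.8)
The plan is to reduce the claim to the Jacobi identity for Whitehead products together with the known structure of $\pi_5(\bigvee_{i=1}^k S^3_i)$. First I would recall, via the Hilton--Milnor theorem, that $\pi_5(\bigvee_{i=1}^k S^3_i)$ is generated by the elements $\iota_3^j\eta_3^2$ for $1\le j\le k$ and the Whitehead products $[\iota_3^s,\iota_3^t]$ for $1\le s<t\le k$ (there are no higher-weight brackets in this range, since a triple bracket of $3$-spheres lives in degree $7$). So it suffices, by bilinearity of the Whitehead product, to verify $[\iota_3^i,\alpha]\in C$ when $\alpha$ runs over these two families of generators.

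For $\alpha=\iota_3^j\eta_3^2$: using the standard identity $[\iota_3^i,\iota_3^j\eta_3^2]=[\iota_3^i,\iota_3^j]\circ(\Sigma(\iota_3\wedge\eta_3^2))$, i.e. $[\iota_3^i,\iota_3^j]\eta_5^2$ after reindexing the suspension coordinates (the same manipulation as in the proof of Lemma \ref{thm:4}), this lies in the summand $\bigoplus_{s<t}\mathbb{Z}_2\{[\iota_3^s,\iota_3^t]\eta_5^2\}$ of $C$ — with the caveat that when $i=j$ the element $[\iota_3^i,\iota_3^i]\eta_5^2$ must be shown to vanish, which follows because $[\iota_3^i,\iota_3^i]=\iota_3^i\circ[\iota_2,\iota_2]$ and $[\iota_2,\iota_2]$ is (up to sign) $2\eta_2-$ related, so composed with $\eta_5^2$ (an element killed by $2$ beyond the relevant range) it dies; in any case the result is in $C$. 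For $\alpha=[\iota_3^s,\iota_3^t]$: then $[\iota_3^i,\alpha]=[\iota_3^i,[\iota_3^s,\iota_3^t]]$ is literally a generator (or, after using the Jacobi identity and antisymmetry, a $\mathbb{Z}$-combination of the chosen basis triple brackets $[\iota_3^r,[\iota_3^s,\iota_3^t]]$ with $s\le r,\ s<t$) of the summand $\bigoplus\mathbb{Z}\{[\iota_3^r,[\iota_3^s,\iota_3^t]]\}$ of $C$. Hence $[\iota_3^i,\alpha]\in C$ for every generator $\alpha$, and therefore for all $\alpha\in\pi_5(\bigvee_{i=1}^k S^3_i)$.

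The only real subtlety — the step I expect to be the main obstacle — is bookkeeping the Jacobi identity and the antisymmetry $[a,b]=-(-1)^{|a||b|}[b,a]$ carefully enough to confirm that an arbitrary bracket $[\iota_3^i,[\iota_3^s,\iota_3^t]]$ re-expands inside the specific basis $\{[\iota_3^r,[\iota_3^s,\iota_3^t]]: s\le r,\ s<t\}$ chosen in the definition of $C$, rather than producing a stray term outside it; but since all such triple brackets already lie in the Hilton--Milnor summand of weight-three brackets, and that entire summand is contained in $C$ by definition, no term can escape $C$. Everything else is a routine suspension-coordinate computation of the type already carried out in Lemma \ref{thm:4}.
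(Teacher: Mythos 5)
Your approach is essentially the paper's: decompose $\pi_5(\bigvee_{i=1}^k S^3_i)$ via Hilton--Milnor, reduce to generators, handle $[\iota_3^i,\iota_3^j\eta_3^2]$ by the composition identity of Lemma \ref{thm:4} and $[\iota_3^i,[\iota_3^s,\iota_3^t]]$ by the Jacobi identity, observing that the weight-three Whitehead summand is entirely contained in $C$.

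One wrinkle: your handling of the $i=j$ case is wrong in its reasoning. The claim $[\iota_3^i,\iota_3^i]=\iota_3^i\circ[\iota_2,\iota_2]$ is not a valid identity (the degrees do not even match: $[\iota_2,\iota_2]\in\pi_3(S^2)$, so $\iota_3^i\circ[\iota_2,\iota_2]$ is not a well-formed composition into $\pi_5(S^3)$). The correct and simpler fact, which the paper cites directly, is that $[\iota_3,\iota_3]=0$ in $\pi_5(S^3)$ because $S^3$ is an $H$-space. Also, your fallback ``in any case the result is in $C$'' is not safe: the summand $\bigoplus_{i<j}\mathbb{Z}_2\{[\iota_3^i,\iota_3^j]\eta_5^2\}$ of $C$ excludes $i=j$, and a nonzero $[\iota_3^i,\iota_3^i]\eta_5^2$ would land in $\pi_7(S^3_i)\subset B$, not in $C$; so the vanishing of the Whitehead square is genuinely needed here, not optional. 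With $[\iota_3,\iota_3]=0$ in hand, the rest of your argument matches the paper's.
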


\begin{proof}
  By the Hilton-Milnor theorem,
  \[
  \pi_5(\bigvee_{i=1}^k S^3_i)\cong  \bigg(\bigoplus_{i=1}^k\mathbb{Z}_2\{\iota_3^i\eta_3^2\}\bigg)\oplus\bigg(\bigoplus_{1\leq i<j\leq k}\mathbb{Z}\{[\iota_3^i,\iota_3^j]\}\bigg).
  \]
  Suppose $\alpha=\sum_{j=1}^{k}a_j\iota_3^j\eta_3^2+\sum_{1\leq s<t\leq k} b_{st} [\iota_3^s,\iota_3^t]$. Then for $1\leq i\leq k$,
  \begin{align*}
    [\iota_3^i,\alpha] & =\bigg[\iota_3^i,\sum_{j=1}^{k}a_j\iota_3^j\eta_3^2+\sum_{1\leq s<t\leq k} b_{st} [\iota_3^s,\iota_3^t]\bigg] \\
     & =\sum_{j=1}^{k}a_j[\iota_3^i,\iota_3^j\eta_3^2]+\sum_{1\leq s<t\leq k} b_{st}[\iota_3^i,[\iota_3^s,\iota_3^t]].
  \end{align*}
  Together with Lemma \ref{thm:4}, the fact $[\iota_3,\iota_3]=0$, and the Jacob identity for Whitehead product, one can successfully complete the proof.
\end{proof}

\begin{proof}[Proof of Theorem \ref{thm:15}]
  Given that the case $k=1$ has already been solved in \cite{Wan21}, we proceed under the assumption that $k\geq 2$. By applying Proposition \ref{thm:17}, one can directly verify that $X$ must be homotopy  equivalent to one of the complexes presented in (1)-(5).

  To prove that these complexes are not homotopy equivalent, we still use Lemma \ref{thm:12} as our main tool. The attaching maps of the top cell for complexes (1)-(5) are presented below:
  \begin{gather*}
    \phi_1=\sum_{i=1}^k [\iota_3^i,\iota_5^i],\quad \phi_2=\phi_1+\iota_3^1\nu'\eta_6,\quad \phi_3=\phi_1+\iota_5^1\eta_5^2, \\
    \phi_4=\phi_1+\iota_3^1\nu'\eta_6+\iota_5^1\eta_5^2,\quad \phi_5=\phi_1+\iota_3^1\nu'\eta_6+\iota_5^2\eta_5^2.
  \end{gather*}
  Let
  $F=\begin{pmatrix}
     f & g \\
     0 & h
   \end{pmatrix}\in\mathcal{E}(\bigvee_{i=1}^k (S^3_i\vee S^5_i))$, where $
   f\in \mathcal{E}(\bigvee_{i=1}^k S^3_i)$, $g\in [\bigvee_{i=1}^k S^5_i,\bigvee_{i=1}^k S^3_i]$ and $h\in \mathcal{E}(\bigvee_{i=1}^k S^5_i)$. All we need is to show that $F\phi_i\neq \phi_j$ for all $1\leq i\neq j\leq 5$.

   Suppose $Q_f=(a_{ij}), Q_h=(b_{ij})\in GL_k(\mathbb{Z})$ and set $g_i:=g|_{S_i^5}\in\pi_5 (\bigvee_{i=1}^k S_i^3)$. By definition, $M_{\phi_i}=I$ for any $1\leq i\leq 5$. Therefore, by Lemma \ref{thm:11}, $M_{F\phi_i}=Q_fQ_h^T$, and if $F\phi_i=\pm\phi_j$ for some $1\leq i,j\leq 5$, it must be true that $Q_fQ_h^T=\pm I$. Given this condition, we have
   \[
   F\phi_1=\pm\phi_1+\sum_{i=1}^k [\iota_3^i,g_i].
   \]
   According to Lemma \ref{thm:21}, $\sum_{i=1}^k [\iota_3^i,g_i]\in C$. This implies that $F\phi_1\neq \pm\phi_i$ for any $2\leq i\leq 5$.

   Using Lemma \ref{thm:22} and Corollary \ref{thm:23}, we have
   \begin{multline*}
     F(\iota_3^1\nu'\eta_6)=\bigg(\sum_{i=1}^{k} a_{i1}\iota_3^i\bigg)\nu'\eta_6=\sum_{i=1}^{k}(a_{i1}\iota_3^i)\nu'\eta_6+\sum_{1\leq i<j\leq k}[a_{i1}\iota_3^i,a_{j1}\iota_3^j]\eta_5^2 \\
     =\sum_{i=1}^{k}a_{i1}\iota_3^i\nu'\eta_6+\sum_{1\leq i<j\leq k}a_{i1}a_{j1}[\iota_3^i,\iota_3^j]\eta_5^2.
   \end{multline*}
   Therefore, $F\phi_2=F(\phi_1)+F(\iota_3^1\nu'\eta_6)$ does not contain any term of $\iota_5^i\eta_5^2$, leading to the conclusion that $F\phi_2\neq \pm \phi_3,\pm\phi_4,\pm\phi_5$.

   Similarly, since
   \[
   F(\iota_5^1\eta_5\eta_6)=\bigg(\sum_{i=1}^{k} b_{i1}\iota_5^i\bigg)\eta_5\eta_6=\sum_{i=1}^{k} b_{i1}\iota_5^i\eta_5^2,
   \]
   we see $F\phi_3=F(\phi_1)+F(\iota_5^1\eta_5^2)$ does not contain any term of $\iota_3^i\nu'\eta_6$, suggesting that $F\phi_3\neq \pm \phi_2,\pm\phi_4,\pm\phi_5$.

   It remains to demonstrate that $F\phi_4\neq \pm\phi_5$. As previously discussed, under the assumption $Q_fQ_h^T=\pm I$, we have
   \[
   F\phi_4=F\phi_1+F(\iota_3^1\nu'\eta_6)+F(\iota_5^1\eta_5^2) =\pm\phi_1+\sum_{i=1}^{k}a_{i1}\iota_3^i\nu'\eta_6+\sum_{i=1}^{k} b_{i1}\iota_5^i\eta_5^2+\psi,
   \]
   where $\psi\in C$. If $F\phi_4=\pm\phi_5=\pm\phi_1+\iota_3^1\nu'\eta_6+\iota_5^2\eta_5^2$, then for $1\leq i\leq k$, when modulo $2$,
   \[
   a_{i1}\equiv\begin{cases}
            1, & i=1 \\
            0, & i\neq 1,
          \end{cases}
   \quad
   b_{i1}\equiv\begin{cases}
            1, & i=2 \\
            0, & i\neq 2.
          \end{cases}
   \]
   Therefore, $\sum_{i=1}^{k}a_{i1}b_{i1}\equiv 0 \mod 2$. However, the condition $Q_fQ_h^T=\pm I$ implies that $\sum_{i=1}^{k}a_{i1}b_{i1}=\pm 1$, thereby creating a contradiction. Hence it can be concluded that $F\phi_4\neq \pm\phi_5$, and our proof is now fully complete.
\end{proof}

\section*{Acknowledgments}

This work is supported by the National Natural Science Foundation of China (Grant
No. 12101019).

\bibliographystyle{plain}
\bibliography{bib.bib}

\noindent Xueqi Wang

\noindent Department of Basic Sciences, Beijing International Studies University, Bejing, 100024, China

\noindent wangxueqi@bisu.edu.cn

\end{document}